\theoremstyle{plain}
\date{\today}
\title{Central limit theorems in linear dynamics}
\author{Fr\'ed\'eric Bayart}
\address{
Clermont Universit\'e, Universit\'e Blaise Pascal, Laboratoire de Math\'ematiques, BP 10448, F-63000 CLERMONT-FERRAND -
CNRS, UMR 6620, Laboratoire de Math\'ematiques, F-63177 AUBIERE
}
\email{Frederic.Bayart@math.univ-bpclermont.fr}
\subjclass{47B37}
\keywords{hypercyclic operators, linear dynamics, ergodic theory, dynamical systems, central limit theorem}
\newcommand{\dis}{\displaystyle}
\newcommand{\veps}{\varepsilon}
\def\RR{\mathbb R}
\def\NN{\mathbb N}
\def\ZZ{\mathbb Z}
\def\TT{\mathbb T}
\def\DD{\mathbb D}
\def\CC{\mathbb C}
\def\LX{\mathfrak{L}(X)}
\def\dmunk{d\bar \mu\big((n_k)\big)}
\def\NNinf{\NN^\infty}
\def\ZZinf{\ZZ^\infty}
\def\ZZinfinf{\ZZ_<^\infty}
\newcommand{\vect}{\textrm{span}}
\newcommand{\cov}{\mathrm{Cov}}
\newtheorem{theorem}{Theorem}[section]
\newtheorem{lemma}[theorem]{Lemma}
\newtheorem{proposition}[theorem]{Proposition}
\newtheorem{corollary}[theorem]{Corollary}
\theoremstyle{definition}}
\theoremstyle{definition}}
\theoremstyle{definition}\newtheorem{example}[theorem]{Example}}
\theoremstyle{definition}}
\theoremstyle{definition}}
\theoremstyle{definition}\newtheorem{remark}[theorem]{Remark}}
\newtheorem{question}[theorem]{Question}
\newtheorem*{theoremhcc}{Theorem (Hypercyclicity Criterion)}
\newtheorem*{VOLNY}{Theorem A}
\newtheorem*{Fact1}{Fact 1}
\newtheorem*{Fact2}{Fact 2}
\begin{document}

\begin{abstract}
Given a bounded operator $T$ on a Banach space $X$, we study the existence of a probability measure
$\mu$ on $X$ such that, for many functions $f:X\to\mathbb K$, the sequence $(f+\dots+f\circ T^{n-1})/\sqrt n$
converges in distribution to a Gaussian random variable.
\end{abstract}

\maketitle

\section{Introduction}

Linear dynamics (namely the study of the dynamics of linear operators) is a branch of analysis
connecting functional analysis and dynamics. Its main topics are detailed in the two books \cite{BM09} and \cite{GePeBook}. As for the classical dynamical systems, one can study the dynamics of linear operators from a topological point of view.  Precisely, an operator $T$ defined on a separable Banach space $X$ is called \emph{hypercyclic} provided there exists a vector $x\in X$
such that its orbit $\{T^n x;\ n\geq 0\}$ under $T$ is dense. In this context, contrary to the general case, there is a very easy criterion to prove that an operator is hypercyclic; it allows
to exhibit many hypercyclic operators. Let us recall this criterion.
\begin{theoremhcc}
Let $T\in\LX$.
Suppose that there exist a dense subset $\mathcal D\subset X$ and a sequence of maps $(S_n)_{n\geq 0}$, $S_n:\mathcal D\to X$, such that, for each $x\in\mathcal D$,
\begin{enumerate}[(i)]
\item $T^nx\to 0$;
\item $S_n x\to 0$;
\item $T^n S_n x\to x$.
\end{enumerate}
Then $T$ is hypercyclic.
\end{theoremhcc}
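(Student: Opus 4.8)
The plan is to deduce hypercyclicity from the Birkhoff transitivity theorem, which asserts that a continuous self-map of a separable, completely metrizable space without isolated points is hypercyclic as soon as it is topologically transitive, i.e.\ for every pair of nonempty open sets $U,V\subset X$ there exists $n\geq 0$ with $T^n(U)\cap V\neq\emptyset$. Since $X$ is a separable Banach space and $T$ is bounded, hence continuous, it remains only to verify topological transitivity from conditions (i)--(iii).

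So I would fix two nonempty open sets $U,V\subset X$ and, using the density of $\mathcal D$, choose $u\in\mathcal D\cap U$ and $v\in\mathcal D\cap V$. The idea is to perturb $u$ by a vector that $T^n$ sends approximately onto $v$: set
\[
x_n=u+S_n v,\qquad\text{so that}\qquad T^n x_n=T^n u+T^n S_n v.
\]
Condition (ii) gives $S_n v\to 0$, hence $x_n\to u\in U$; conditions (i) and (iii) give $T^n u\to 0$ and $T^n S_n v\to v$, hence $T^n x_n\to v\in V$. Therefore, for all sufficiently large $n$, one has simultaneously $x_n\in U$ and $T^n x_n\in V$, so that $T^n x_n\in T^n(U)\cap V$. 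This proves transitivity and hence hypercyclicity.

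The only point requiring care is that a single perturbation $x_n=u+S_n v$ must accomplish two tasks at once: remain inside $U$ while being carried into $V$. This succeeds precisely because the two requirements are governed by independent limits — the assertion $x_n\in U$ uses only (ii), whereas the assertion $T^n x_n\in V$ uses only (i) and (iii) — so both can be forced by enlarging $n$. I would therefore expect no serious obstacle beyond recalling the Birkhoff transitivity theorem. Alternatively, one can bypass it and argue directly via the Baire category theorem, exhibiting the hypercyclic vectors as the dense $G_\delta$ set $\bigcap_j\bigcup_{n\geq 0}T^{-n}(V_j)$ for a countable base $(V_j)$ of the topology of $X$, where the very same computation shows that each $\bigcup_{n\geq 0}T^{-n}(V_j)$ is open and dense.
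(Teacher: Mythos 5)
Your argument is correct. The paper itself states the Hypercyclicity Criterion without proof (it is recalled from the literature, e.g.\ \cite{BM09}), and your deduction via Birkhoff's transitivity theorem --- perturbing $u\in\mathcal D\cap U$ to $x_n=u+S_nv$ and using the linearity of $T^n$ together with (i)--(iii) to get $x_n\to u$ and $T^nx_n\to v$ --- is exactly the standard proof of this statement.
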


In this paper, we shall concentrate on the other aspect of linear dynamics, that links it with ergodic theory. Let us recall some basic definitions. Let $(X,\mathcal B,\mu)$ be a probability space and let $T:(X,\mathcal B,\mu)\to (X,\mathcal B,\mu)$ be a measurable map. We set 
$$L^2_0(\mu)=\left\{f\in L^2(\mu);\ \int_X fd\mu=0\right\}.$$

We say that $T$ is a \emph{measure-preserving transformation} (or that $\mu$ is $T$-invariant)
if $\mu(T^{-1}A)=\mu(A)$ for any $A\in\mathcal B$. A measure-preserving transformation $T:(X,\mathfrak B,\mu)\to(X,\mathfrak B,\mu)$ is \emph{ergodic} (with respect to $\mu$) if
$$\frac{1}{N}\sum_{n=0}^{N-1} \mu(A\cap T^{-n}(B))\xrightarrow{N\to\infty} \mu (A)\mu(B)
$$
for any measurable sets $A,B\subset X$; $T$ is \emph{weakly mixing} (with respect to $\mu$) if
$$\frac{1}{N}\sum_{n=0}^{N-1} \vert \mu(A\cap T^{-n}(B))-\mu (A)\mu(B)\vert\xrightarrow{N\to\infty} 0
$$for any $A,B\in\mathcal B$; and $T$ is \emph{strongly mixing} if
$$\mu(A\cap T^{-n}(B))\xrightarrow{n\to\infty} \mu(A)\mu(B)$$
for any $A,B\in\mathcal B$. 

We wonder whether there exists a Borel probability  measure $\mu$ on the separable Banach space $X$, which is nondegenerate (namely, $\mu(U)>0$ for any nonempty and open subset $U\subset X$),
such that $T\in\mathcal L(X)$ is $\mu$-invariant and $T$ is ergodic (resp. weakly mixing, strongly mixing) with respect to $\mu$.  This line of investigation was opened by Flytzanis in \cite{Fly95} and pursued later by Bayart, Grivaux, Matheron (see \cite{BAYGRITAMS}, \cite{BAYGRIPLMS}, \cite{BAYMATHERGOBEST})
 and also recently by Murillo-Arcilla and Peris (see \cite{MuPe13}). 

It turns out that, when an operator has many eigenvectors associated to eigenvalues of modulus 1, then it is weakly mixing with respect to a nondegenerate and  invariant Gaussian measure on $X$. Indeed, the following theorem is proved in \cite{BAYMATHERGOBEST}.
\begin{theorem}\label{THMERGOBEST}
Let $T\in\LX$. Suppose that, for any countable set $D\subset\mathbb T=\{z\in\CC;\ |z|=1\}$, 
the linear span of $\bigcup_{\lambda\in\mathbb T\backslash D}\ker(T-\lambda)$ is dense in $X$. 
Then there exists a $T$-invariant weakly  mixing Gaussian Borel probability measure
$\mu$ on $X$ with full support.
\end{theorem}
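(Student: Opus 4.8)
The plan is to realize $\mu$ as the law of an $X$-valued Gaussian random variable manufactured from the unimodular eigenvectors of $T$, chosen so that the action of $T$ is conjugated to the elementary action of multiplication by $\lambda$ on a function space over $\TT$. Invariance will then come from $|\lambda|=1$, while weak mixing will follow from the absence of point spectrum of that multiplication operator.

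First I would convert the hypothesis into the existence of a continuous (atomless) Borel probability measure $\sigma$ on $\TT$, together with a sequence of \emph{eigenvector fields}, i.e.\ bounded measurable maps $E_j\colon\TT\to X$ with $TE_j(\lambda)=\lambda E_j(\lambda)$ for $\sigma$-almost every $\lambda$, enjoying the spanning property $\overline{\vect}\{E_j(\lambda);\ j\geq1,\ \lambda\in A\}=X$ for every Borel set $A$ with $\sigma(A)=1$, and the summability $\sum_j\int_\TT\|E_j(\lambda)\|^2\,d\sigma(\lambda)<\infty$. I expect this reduction to be the main obstacle. The delicate point is that $\sigma$ must have no atoms: an atom at some $\lambda_0$ would let the single eigenvalue $\lambda_0$ carry positive spanning mass, whereas the hypothesis insists that density of the span of eigenvectors survives the removal of \emph{any} countable set of eigenvalues. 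This robustness is exactly what I would exploit to run a measurable-selection and diagonalisation argument producing the fields while steering all spanning mass onto an atomless $\sigma$; the simultaneous control of measurability, atomlessness and spanning is the technical heart of the matter.

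Granting the pair $(\sigma,(E_j))$, I would set $\Phi=\sum_{j\geq1}\int_\TT E_j(\lambda)\,dW_j(\lambda)$, where the $W_j$ are independent standard complex Gaussian white noises on $(\TT,\sigma)$; the summability condition guarantees convergence in $L^2$ to an $X$-valued centred Gaussian variable, and I define $\mu$ as its distribution. Invariance is then a one-line covariance computation: for $x^\ast,y^\ast\in X^\ast$ one has $\EE\big[\langle\Phi,x^\ast\rangle\overline{\langle\Phi,y^\ast\rangle}\big]=\sum_j\int_\TT\langle E_j(\lambda),x^\ast\rangle\overline{\langle E_j(\lambda),y^\ast\rangle}\,d\sigma(\lambda)$, and replacing $\Phi$ by $T\Phi$ multiplies the integrand by $|\lambda|^2=1$, so $T\Phi$ and $\Phi$ share the same (proper, circularly symmetric) complex Gaussian law, that is $T_\ast\mu=\mu$. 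Full support follows because the topological support of a centred Gaussian measure is the closure of the range of its covariance, which here equals $\overline{\vect}\{E_j(\lambda)\}=X$ by the spanning property.

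It remains to prove weak mixing, which I would read off from the Wiener chaos decomposition $L^2(\mu)=\bigoplus_{k\geq0}\mathcal H_k$. The Koopman operator $U_T\colon f\mapsto f\circ T$ preserves each chaos $\mathcal H_k$ and acts on it as the $k$-fold symmetric tensor power of its restriction to the first chaos $\mathcal H_1$; via the white-noise construction, $U_T|_{\mathcal H_1}$ is unitarily equivalent to multiplication by $\lambda$ on $L^2(\TT,\sigma)$, whose maximal spectral type is $\sigma$. Since the eigenvalues occurring on $\mathcal H_k$ are products of $k$ eigenvalues of $U_T|_{\mathcal H_1}$, the system has no nonconstant eigenfunction in $L^2_0(\mu)$ as soon as $U_T|_{\mathcal H_1}$ has empty point spectrum; and because $\sigma$ is atomless, the equation $\lambda f(\lambda)=cf(\lambda)$ forces $f=0$ in $L^2(\TT,\sigma)$, so there is indeed no point spectrum. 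Continuity of the spectrum of $U_T$ on $L^2_0(\mu)$ is equivalent to weak mixing, which completes the argument. When $\KK=\RR$ one runs the same construction on the complexification and descends by taking real parts.
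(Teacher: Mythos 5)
This theorem is not proved in the paper at all: it is imported verbatim from \cite{BAYMATHERGOBEST} (see also \cite{BAYGRIPLMS}), so the comparison has to be made with the proof given there. Your outline does reconstruct the strategy of that reference correctly --- continuous measure $\sigma$ on $\TT$ plus $\TT$-eigenvector fields, Gaussian measure built by integrating the fields against independent white noises, invariance from $|\lambda|=1$, full support from the spanning property via the Cameron--Martin space, and weak mixing from the Wiener chaos decomposition together with the atomlessness of $\sigma$ (the chaos/tensor-power argument and the "no point spectrum on $L^2_0$ iff weakly mixing" reduction are both fine). But as a proof it has two genuine holes. The first you acknowledge yourself: the entire passage from the hypothesis "the span of $\bigcup_{\lambda\in\TT\setminus D}\ker(T-\lambda)$ is dense for every countable $D$" to the existence of an \emph{atomless} $\sigma$ and measurable fields $E_j$ whose essential ranges span $X$ after deleting any $\sigma$-null set is left as a black box. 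This is precisely the nontrivial content of the equivalence between the two notions of "perfectly spanning unimodular eigenvectors" established in \cite{BAYGRIPLMS} and \cite{BAYMATHERGOBEST} (one builds the fields as continuous maps on Cantor-type perfect subsets of $\TT$ by a recursive selection), and without it nothing else in your argument can start.

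The second gap is a step that, as written, is false. You claim that $\sum_j\int_\TT\|E_j(\lambda)\|^2\,d\sigma(\lambda)<\infty$ guarantees that $\sum_j\int_\TT E_j\,dW_j$ converges to an $X$-valued Gaussian vector. This is true when $X$ is a Hilbert space (or has type $2$), but in a general Banach space the operator $f\mapsto\int_\TT E_j(\lambda)f(\lambda)\,d\sigma(\lambda)$ from $L^2(\sigma)$ to $X$ must be $\gamma$-radonifying for the stochastic integral to define a Gaussian vector, and square integrability of $\|E_j\|$ does not imply this. This is exactly why the earlier version of the theorem in \cite{BAYGRIPLMS} carried extra geometric hypotheses on $X$, and removing them is one of the main points of \cite{BAYMATHERGOBEST}: one must exploit the freedom in the choice of the fields (taking them to be sufficiently "flat" limits of step functions, with control of the $\gamma$-radonifying norms, so that each integral is an honest Gaussian vector and the series converges). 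So the skeleton of your argument matches the known proof, but the two places where you wave your hands are precisely the two places where all the work is.
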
 
An interesting application of this theorem (strictly speaking, of a variant of this theorem) is that an enhancement of the hypercyclicity criterion leads to a strongly mixing dynamical system.
\begin{theorem}\label{THMMURILLOPERIS}
Let $T\in\LX$. Suppose that there exist a dense set $\mathcal D\subset X$ and a sequence 
of maps $(S_n)_{n\geq 0}$, $S_n:\mathcal D\to X$, such that, for each $x\in\mathcal D$,
\begin{enumerate}[(i)]
\item $\sum_{n\geq 0} T^nx$ converges unconditionally;
\item $\sum_{n\geq 0} S_n x$ converges unconditionally;
\item $T^n S_n x= x$ and $T^mS_n x=S_{n-m}x$ if $n>m$.
\end{enumerate}
Then there exists a $T$-invariant strongly mixing (Gaussian) Borel probability measure
$\mu$ on $X$ with full support.
\end{theorem}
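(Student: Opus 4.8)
The plan is to deduce the result from (a variant of) Theorem~\ref{THMERGOBEST} by manufacturing, out of the maps $T^n$ and $S_n$, a family of unimodular eigenvector fields regular enough to force strong---and not merely weak---mixing. First I would, for each $x\in\D$, assemble the forward and backward orbits into a single bi-infinite sequence $(v_k)_{k\in\ZZ}$ by setting $v_k=T^kx$ for $k\ge 0$ and $v_{-k}=S_kx$ for $k\ge 1$. Condition~(iii), together with $S_0x=x$, yields $Tv_k=v_{k+1}$ for every $k\in\ZZ$ (the identities $TS_kx=S_{k-1}x$ for $k\ge 2$ and $TS_1x=x$ being read off from $T^mS_nx=S_{n-m}x$ and $T^nS_nx=x$). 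Conditions~(i) and~(ii) guarantee that both half-series converge unconditionally, so for every $\lambda\in\TT$ the field
\[
E_x(\lambda)=\sum_{k\in\ZZ}\lambda^{-k}v_k=\sum_{k\ge 0}\lambda^{-k}T^kx+\sum_{k\ge 1}\lambda^{k}S_kx
\]
is well defined, and a one-line reindexing using $Tv_k=v_{k+1}$ gives $TE_x(\lambda)=\lambda E_x(\lambda)$; thus $E_x(\lambda)\in\ker(T-\lambda)$.

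The crucial regularity point is that unconditional convergence of $\sum_k v_k$ forces $\sum_{k}|\langle x^*,v_k\rangle|<\infty$ for every $x^*\in X^*$, so each scalar field $\lambda\mapsto\langle x^*,E_x(\lambda)\rangle$ has an absolutely summable Fourier expansion and is in particular continuous on $\TT$. Integrating against normalised Lebesgue measure $m$ isolates the zeroth coefficient, $\int_{\TT}E_x(\lambda)\,dm(\lambda)=v_0=x$; since a countable set $D\subset\TT$ is $m$-null this remains true when integration is restricted to $\TT\setminus D$, and the resulting integral lies in the closed linear span of $\{E_x(\lambda):\lambda\notin D\}$. Hence $\D\subset\overline{\vect}\bigcup_{\lambda\in\TT\setminus D}\ker(T-\lambda)$ for every countable $D$, and density of $\D$ shows that the spanning hypothesis of Theorem~\ref{THMERGOBEST} is met; this already produces a $T$-invariant Gaussian measure with full support that is weakly mixing.

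To upgrade weak mixing to strong mixing I would feed these continuous fields into the Gaussian construction with Lebesgue measure as spectral measure: fix a countable dense family $(x_j)\subset\D$, independent complex Gaussian random measures $(W_j)$ on $\TT$ with control measure $m$, and summable weights $(c_j)$ chosen so that $\Phi=\sum_j c_j\int_{\TT}E_{x_j}(\lambda)\,dW_j(\lambda)$ defines an $X$-valued Gaussian vector; let $\mu$ be its law. For a Gaussian system it suffices to verify decay of correlations on the first Wiener chaos, i.e.\ for linear functionals, because the Koopman operator respects the chaos decomposition and the maximal spectral type of the whole system is a sum of convolution powers of its restriction to the first chaos, so the system is strongly mixing as soon as that restriction has a Rajchman spectral type. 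For $f=\langle x^*,\cdot\rangle$ and $g=\langle y^*,\cdot\rangle$ one computes, using $T^nE_{x_j}(\lambda)=\lambda^nE_{x_j}(\lambda)$,
\[
\int_X \langle x^*,T^nz\rangle\,\overline{\langle y^*,z\rangle}\,d\mu(z)=\int_{\TT}\lambda^{n}K(\lambda)\,dm(\lambda),\qquad K(\lambda)=\sum_j c_j^{2}\,\langle x^*,E_{x_j}(\lambda)\rangle\,\overline{\langle y^*,E_{x_j}(\lambda)\rangle},
\]
so the correlation is a Fourier coefficient of the function $K$. The regularity secured above (each summand continuous, the weights summable) makes $K$ continuous, hence in $L^1(m)$, and the Riemann--Lebesgue lemma forces the correlation to $0$ as $n\to\infty$.

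The main obstacle is precisely this passage from weak to strong mixing: one must make rigorous the principle ``a Gaussian system is strongly mixing iff its first-chaos spectral type is Rajchman'' and, more concretely, control the series defining $\Phi$ well enough that the random vector genuinely lives in $X$ (so that $\mu$ is a Radon Gaussian measure of full support, not merely a cylindrical one) and that the kernel $K$ is regular enough for Riemann--Lebesgue, uniformly in the pair $(x^*,y^*)$. This is the technical content carried by the variant of Theorem~\ref{THMERGOBEST}; the genuinely new input is the observation that unconditional convergence in (i)--(ii) upgrades the eigenvector fields from merely measurable (enough for weak mixing) to absolutely Fourier-summable, which is exactly what allows Lebesgue measure to serve as a Rajchman spectral measure and thereby yields strong mixing.
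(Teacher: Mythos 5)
Your argument is correct in outline, but it follows the route that the paper only alludes to, not the one it actually carries out. The paper presents this theorem as a corollary of a variant of Theorem~\ref{THMERGOBEST} (the variant of \cite{BAYMATHERGOBEST} in which ``countable set'' is replaced by ``set of extended uniqueness''), and your reduction is exactly the right way to feed hypotheses (i)--(iii) into that variant: the bi-infinite orbit $(v_k)$, the fields $E_x(\lambda)=\sum_k\lambda^{-k}v_k$ (well defined for every $\lambda\in\TT$ by the bounded-multiplier property of unconditionally convergent series, and norm-continuous in $\lambda$ by the Cauchy criterion for unconditional convergence), the relation $TE_x(\lambda)=\lambda E_x(\lambda)$, and the recovery $x=\int_{\TT\setminus D}E_x\,dm$ for any Lebesgue-null $D$ --- which does cover sets of extended uniqueness, since these are null for the (Rajchman) Lebesgue measure. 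The construction the paper itself performs (Section~\ref{SECPROOF}, and Theorem~\ref{THMMAINTCLPOLY} under exactly hypotheses (i)--(iii)) is entirely different: following \cite{MuPe13}, it conjugates $T$ to the Bernoulli shift on $\NN^\ZZ$ via $\phi\big((n_k)\big)=\sum_kT^kx_{n_k}$ and pushes forward a product measure. That measure is not Gaussian, but it comes with an explicit orthonormal basis, explicit Fourier coefficients and a canonical filtration, which is what the rest of the paper needs for the covariance estimates and the martingale CLT; your Gaussian route accounts for the ``(Gaussian)'' parenthetical in the statement but would not serve those later purposes.

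One caveat on the step you flag as ``technical'': it is more than bookkeeping. In a general Banach space, almost sure convergence of $\sum_kg_kv_k$ for i.i.d.\ standard Gaussians $g_k$ does \emph{not} follow from unconditional convergence of $\sum_kv_k$ (consider $v_k=(\log k)^{-1/2}e_k$ in $c_0$), so the operator $f\mapsto\sum_k\hat f(k)v_k$ from $L^2(m)$ to $X$ need not be $\gamma$-radonifying, and one cannot simply integrate the continuous field $E_{x_j}$ against a white noise with control measure $m$ and sum over $j$. Your verification of the spanning hypothesis is sound, but the passage from ``continuous eigenvector fields'' to ``Radon Gaussian measure with Rajchman spectral type'' must genuinely be delegated to the theorem of \cite{BAYMATHERGOBEST} (whose proof builds the covariance from nuclear pieces) rather than re-derived by the naive formula for $\Phi$. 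The chaos-decomposition principle you invoke to reduce strong mixing to the first chaos is standard and correctly used.
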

Theorem \ref{THMMURILLOPERIS} has also been obtained in \cite{MuPe13} in a completely different way. The measure $\mu$ constructed in \cite{MuPe13} is not a Gaussian measure; in \cite{MuPe13}, very few properties of this measure are proved. For instance, it is not known whether the norm $\|\cdot\|$ or the linear functionals $\langle x^*,\cdot\rangle$ belong to $L^2(X,\mu)$, even if we can always ensure these properties, see Section \ref{SECPROOF}. On the other hand, 
the dynamical system $(T,\mu)$ is conjugated to an easy strongly mixing dynamical system:  a Bernoulli shift. 

\medskip

In this paper, we are interested in properties stronger than just ergodicity or mixing. First we are interested in the speed of mixing. 
For $f,g\in L^2(X,\mu)$, define the correlation of $f$ and $g$ as 
$$\cov(f,g)=\int_X fgd\mu-\int_X fd\mu \int_X gd\mu$$
and the correlation of order $n$ of $f$ and $g$ (with respect to $T$) by
$$\mathcal I_n(f,g)=\cov(f\circ T^n,g).$$
$T$ is $\mu$-mixing provided $\mathcal I_n(f,g)$ goes to zero for any $f,g\in L^2(X,\mu)$. One may ask if we can estimate the speed of convergence to 0
of $\mathcal I_n(f,g)$ for any $f,g\in L^2(X,\mu)$, or at least for a large class of functions.

The second direction we investigate is related to the central limit theorem. Indeed, if $T$ is $\mu$-ergodic, 
then Birkhoff's theorem says that, for any $f\in L^1(X,\mu)$, 
$$\frac{f+\dots+f\circ T^{n-1}}{n}\to\int fd\mu\textrm{ almost surely}.$$
In other words, the sequence $(f\circ T^k)$ satisfies the strong law of large numbers. 
One can ask whether is also satisfies the central limit theorem, namely
if the sequence $\left(\frac{f+\dots+f\circ T^{n-1}}{\sqrt n}\right)$ converges in
distribution to a Gaussian random variable of zero mean and finite variance (we now assume
$f\in L^2_0$). 

At this point, it is important to notice that we cannot expect results true for all $f\in L^2_0(\mu)$. Indeed, D. Voln\'y has proved in \cite{Vol90} that the Birkhoff means may converge
to arbitrary laws on a dense $G_\delta$-set of $L^2_0(\mu)$. More precisely, Voln\'y has shown that, if $T$ is $\mu$-ergodic with $\mu(\{x\})=0$ for any $x\in X$, 
there exists a dense $G_\delta$-set $E$ of $L^2_0(\mu)$ such that, for any $f\in E$ and any probability measure $\nu$ on $\mathbb R$ satisfying $\int td\nu(t)=0$
and $\int t^2 d\nu(t)=1$, there exists a sequence $(N_k)$ tending to infinity such that $S_{N_k}/\|S_{N_k}\|_2$ converges in distribution to $\nu$, where 
$S_N=f+\dots+f\circ T^{N-1}$. Similarly, the slow decay of correlations is a typical feature of functions in $L^2_0(\mu)$ (see \cite{Che95}).
However, for many concrete dynamical systems, positive results were obtained if we assume some regularity condition on $f$, like $f$ is H\"older.

\medskip

Thus, in this paper, we are interested to prove central limit theorems or to estimate the decay of correlation for functions belonging to a large class of $L_0^2(\mu)$, in the context
of linear dynamical systems. The first step in that direction was done by V. Devinck in \cite{Dev13}. He started from Theorem \ref{THMERGOBEST} and he  was able to prove that,
when the $\TT$-eigenvectors of $T$ can be parametrized in a regular way, then $\cov(f\circ T^n,g)$ decreases fast to zero for $f,g$ belonging to large classes of functions.
Let us summarized his main result.

\begin{theorem}\label{THMDEVINCK}
Let $H$ be a separable Hilbert space and let $T\in\mathfrak L(H)$. Suppose that there exists $\alpha\in(0,1]$ and $E:\TT\to X$ such that 
$TE(\lambda)=\lambda E(\lambda)$ for any $\lambda\in\TT$ and $E$ is $\alpha$-H\"olderian: there exists $C_E>0$ such that
$$\|E(e^{i\theta})-E(e^{i\theta'})\|\leq C_E|\theta-\theta'|^\alpha\textrm{ for any }\theta,\theta'\in [0,2\pi).$$
Suppose moreover that $\vect\big(E(\lambda);\ \lambda\in\TT\big)$ is dense in $H$. Then there exist a $T$-invariant ergodic Gaussian measure $\mu$ with full support and two classes of functions $\mathcal X,\mathcal Y$ such that, for any $(f,g)\in\mathcal X\times\mathcal Y$, 
$$|\cov(f\circ T^n,g)|\leq C_{f,g}n^{-\alpha}.$$
\end{theorem}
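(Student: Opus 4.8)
The plan is to construct $\mu$ explicitly as the law of an $H$-valued Gaussian vector manufactured from the eigenvector field $E$, and then to reduce every correlation to a Fourier coefficient of an $\alpha$-H\"older function on $\TT$, for which the decay at rate $n^{-\alpha}$ is classical.

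First I would build the measure. Let $m$ be the normalised Lebesgue measure on $\TT$ and, $E$ being continuous hence bounded and square integrable, let $\hat E(k)=\int_\TT\lambda^{-k}E(\lambda)\,dm(\lambda)\in H$ denote its Fourier coefficients. Fixing an i.i.d. sequence $(g_k)_{k\in\ZZ}$ of standard complex Gaussian variables, set $\xi=\sum_{k\in\ZZ}g_k\hat E(k)$. Parseval gives $\sum_k\|\hat E(k)\|^2=\int_\TT\|E\|^2\,dm<\infty$, so the series converges in $L^2$ and $\xi$ defines a centred Gaussian measure $\mu$ on $H$ with trace-class covariance. Since $TE(\lambda)=\lambda E(\lambda)$ we have $T\hat E(k)=\hat E(k-1)$, so $T\xi=\sum_k g_k\hat E(k-1)=\sum_j g_{j+1}\hat E(j)$; as $(g_{j+1})_j$ and $(g_j)_j$ are equidistributed, $T\xi$ and $\xi$ have the same law and $\mu$ is $T$-invariant. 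The support of $\mu$ is the closed span of the $\hat E(k)$; by Fej\'er's theorem this equals $\overline{\vect}\big(E(\lambda);\ \lambda\in\TT\big)=H$, so $\mu$ has full support. Finally, on the first Wiener chaos the Koopman operator $U_Tf=f\circ T$ is nothing but multiplication by $\lambda$ on an invariant subspace of $L^2(\TT,m)$, so it has no eigenvalue; since a Gaussian dynamical system whose first-chaos Koopman operator has no eigenvalue is weakly mixing, $(T,\mu)$ is weakly mixing, in particular ergodic.

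The correlation estimate is transparent on the first chaos. For $a\in H$ write $u_a(\lambda)=\pss{E(\lambda)}{a}$, a bounded $\alpha$-H\"older function on $\TT$ with $\hat u_a(k)=\pss{\hat E(k)}{a}$, so that $\pss{\xi}{a}=\sum_k g_k\hat u_a(k)$. Because $\mu$ is a circularly symmetric complex Gaussian, the pseudo-correlation $\EE[\pss{T^n\xi}{a}\pss{\xi}{b}]$ vanishes and only the Hermitian pairing survives; a direct computation using $\EE[g_k\bar g_j]=\delta_{kj}$ and $T^n\hat E(k)=\hat E(k-n)$ gives
\[
\EE\big[\pss{T^n\xi}{a}\,\overline{\pss{\xi}{b}}\big]=\sum_{k\in\ZZ}\hat u_a(k-n)\overline{\hat u_b(k)}=\widehat{u_a\overline{u_b}}\,(-n).
\]
Hence, working with the real-valued functionals $x\mapsto\operatorname{Re}\pss{x}{a}$ (to which the central limit theorem applies and for which the correlation of the paper is, up to the factor $\tfrac12$ and a real part, the expression above), the $n$-th correlation equals a Fourier coefficient of the product $u_a\overline{u_b}$. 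Since $E$ is $\alpha$-H\"older, $u_a$ and $u_b$, and therefore their product, lie in $C^{0,\alpha}(\TT)$, and the classical bound $|\hat h(n)|\le C_h|n|^{-\alpha}$ for $h\in C^{0,\alpha}(\TT)$ yields $|\cov(f\circ T^n,g)|\le C_{a,b}n^{-\alpha}$. Putting these functionals into $\mathcal X$ and $\mathcal Y$ already proves the theorem in its simplest instance.

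To reach genuinely large classes I would run the same computation on the full chaos expansion $L^2_0(\mu)=\bigoplus_{p\ge1}H_p$. The operator $U_T$ preserves each $H_p$, and under the canonical isometry of $H_p$ with the symmetric $p$-fold tensor power of $L^2(\TT,m)$ it acts as multiplication by $\lambda_1\cdots\lambda_p$. Consequently, for $f,g\in H_p$ with symbols $F,G$ on $\TT^p$, the Wiener--It\^o chaos isometry reduces $\cov(f\circ T^n,g)=\pss{U_T^n f}{g}$ to a multiple of $\int_{\TT^p}(\lambda_1\cdots\lambda_p)^n\,F\overline G\,dm^{\otimes p}$, which is the $(-n)$-th Fourier coefficient of the fibre integral of $F\overline G$ along the map $(\lambda_1,\dots,\lambda_p)\mapsto\lambda_1\cdots\lambda_p$. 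Defining $\mathcal X$ and $\mathcal Y$ as the classes of finite-chaos functions whose symbols are jointly $\alpha$-H\"older, the theorem will again follow from the Fourier-coefficient estimate, provided this fibre integral stays in $C^{0,\alpha}(\TT)$ with a controlled constant. Verifying that averaging $F\overline G$ over the fibres of $\lambda_1\cdots\lambda_p$ preserves $\alpha$-H\"older regularity, and thereby fixing the precise and sufficiently rich definition of $\mathcal X$ and $\mathcal Y$, is where I expect the genuine work to lie; the construction of $\mu$ and the first-chaos computation are essentially forced.
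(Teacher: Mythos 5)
A preliminary remark: the paper contains no proof of Theorem \ref{THMDEVINCK}. It is Devinck's result, quoted verbatim from \cite{Dev13} as motivation, and the author explicitly declines even to describe the classes $\mathcal X$ and $\mathcal Y$. So there is nothing in this paper to compare your argument against; what follows assesses it on its own terms (and against the standard construction in \cite{BAYGRIPLMS} and \cite{Dev13}, which your argument essentially reproduces).

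Your first two paragraphs are correct and complete: the Gaussian measure built from the Fourier coefficients $\hat E(k)$, its $T$-invariance via $T\hat E(k)=\hat E(k-1)$, full support via Fej\'er, weak mixing from the absolute continuity of the spectral measures $|u_a|^2\,dm$ on the first chaos (your phrase ``no eigenvalue'' is the right criterion for Gaussian systems, though the cleaner statement is that the maximal spectral type on the first chaos is continuous), and the identification of the first-chaos correlation with $\widehat{u_a\overline{u_b}}(-n)$, which decays like $n^{-\alpha}$ because $u_a\overline{u_b}$ is $\alpha$-H\"older. Since the theorem is existential in $\mathcal X,\mathcal Y$, this already proves it for the nontrivial classes of real parts of linear functionals. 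The one step you flag as open --- that averaging $F\overline G$ over the fibres of $(\lambda_1,\dots,\lambda_p)\mapsto\lambda_1\cdots\lambda_p$ preserves $\alpha$-H\"older regularity --- is indeed what is needed to include all polynomials, but it closes in one line without ever discussing the fibre integral: writing $H=F\overline G$ and substituting $\lambda_1\mapsto e^{i\pi/n}\lambda_1$ reverses the sign of $(\lambda_1\cdots\lambda_p)^n$, whence
$$\left|\int_{\TT^p}(\lambda_1\cdots\lambda_p)^nH\,dm^{\otimes p}\right|=\frac12\left|\int_{\TT^p}(\lambda_1\cdots\lambda_p)^n\bigl(H(\lambda_1,\dots,\lambda_p)-H(e^{i\pi/n}\lambda_1,\dots,\lambda_p)\bigr)\,dm^{\otimes p}\right|\leq \frac{C_H}{2}\left(\frac{\pi}{n}\right)^{\alpha},$$
where $C_H$ involves only the H\"older constant of $H$ in its first variable. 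For a homogeneous polynomial $Q(x,\dots,x)$ the chaos symbols are (up to lower-order Wick corrections of the same form) symmetrizations of $Q(E(\lambda_1),\dots,E(\lambda_p))$, which inherit this regularity from $E$ with constant controlled by $\|Q\|\,\|E\|_\infty^{p-1}C_E$. With that addition your argument is a complete proof of the statement; the remaining distance to Devinck's actual theorem is only in how large one makes $\mathcal X$ and $\mathcal Y$ beyond finite-chaos functions, which requires summing these estimates over the chaos decomposition with control on the constants.
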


We will not describe the two classes $\mathcal X$ and $\mathcal Y$ in the above theorem. Their definitions involve the Gaussian measure $\mu$ and depend on the Hilbertian structure of $H$. We just mention that a typical function in $\mathcal X$ or $\mathcal Y$ is an infinitely differentiable function whose sequence of derivatives satisfies some growth condition. For instance, the set of polynomials $\mathcal P$
is contained in both $\mathcal X$ and $\mathcal Y$. We recall that a function $P:X\to\mathbb R$ is a homogeneous polynomial of degree $d$ 
provided there exists a bounded symmetric $d$-linear form $Q$ on $X$ such that $P(x)=Q(x,\dots,x)$. A polynomial of degree $d$ is a sum
$P=P_0+\dots+P_d$, where each $P_k$ is a homogeneous polynomial of degree $k$.

\medskip

Theorem \ref{THMDEVINCK} is really specific to the Hilbert space setting. In this paper, we obtain several results in the Banach space setting regarding
the decay of correlations and the validity of the central limit theorem for large classes of functions. We do not start from the Gaussian measure of Theorem \ref{THMERGOBEST}; we rather use the class of measures introduced in \cite{MuPe13}. Our theorems will have the following informative form:
\begin{quote}
Let $T\in\mathfrak L(X)$ satisfying a strong form of the Hypercyclicity Criterion. Then there exist a $T$-invariant strongly mixing Borel probability measure $\mu$ on $X$ and a "large" subset $E$ of $L^2(\mu)$ such that
\begin{itemize}
\item for any $f,g\in E$, the sequence of covariances $\big(\cov(f\circ T^n, g)\big)$ converges quickly to zero;
\item for any $f\in E$, $(f+\dots+f\circ T^{n-1})/\sqrt n$ converges in distribution to a Gaussian random variable.
\end{itemize}
\end{quote}
Of course, precise statements will be given in Section \ref{SECMYTCL}, after we define our large subsets $E$. \\
\medskip

\noindent {\bf Notations.} Throughout the paper, the letter $C$ will denote an absolute constant whose value may change from line to line. 
If a constant depends on some parameter $x$, then we shall denote it by $C_x$.

\section{Central limit theorems - Results and examples} \label{SECMYTCL}

\subsection{The spaces of functions}
We shall first define the spaces of functions $f$ such that the sequence $(f\circ T^n)$ will satisfy a central limit theorem. 
There are many differences with the classical cases due to the noncompactness of $X$. We will require that $f$ is infinitely differentiable; this is stronger
than in many situations where a central limit theorem for dynamical systems has been proved. On the contrary, we do not want to restrict ourselves to bounded functions. We expect to apply our results to linear forms for instance.

Let $\omega:\mathbb N\to(1,+\infty)$ which goes to infinity. We define $E_\omega$ as the set of functions $f:X\to\mathbb R$
which are infinitely differentiable at $0$, which may be written, for any $x\in X$, 
$$f(x)=\sum_{\kappa=0}^{+\infty}\frac{D^\kappa f(0)}{\kappa !}(x,\dots,x),$$
and whose sequence of derivatives satisfies
$$\|f\|_\omega:=\sup_{\kappa\geq 0}\|D^\kappa f(0)\|\omega(\kappa)^\kappa<+\infty.$$
Endowed with the norm $\|\cdot\|_\omega$, $E_\omega$ is a Banach space. Clearly, each $E_\omega$ contains the polynomials.
When $\omega$ is well chosen, it also contains other natural classes of functions, as the following proposition
indicates.
\begin{proposition}
There exists a function $\omega:\mathbb N\to\mathbb (1,+\infty)$ tending to infinity such that, for any polynomial $P\in\mathcal P$, for any function
$\phi:\mathbb R\to\mathbb R$ which can be written $\phi(x)=\sum_{n\geq 0}\frac{a_n}{(n!)^{\sigma}}x^n$ with $|a_n|\leq A\tau ^n$ for some constants 
$A,\tau>0$ and $\sigma>\deg(P)$, $\phi\circ P$ belongs to $E_\omega$.
\end{proposition}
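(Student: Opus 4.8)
The plan is to control, for $f=\phi\circ P$, the size of each derivative $D^\kappa f(0)$ and then to exhibit a single slowly growing weight $\omega$ for which $\sup_\kappa\|D^\kappa f(0)\|\,\omega(\kappa)^\kappa<\infty$. Write $P=P_0+\dots+P_d$ with $d=\deg(P)$ and $P_j$ homogeneous of degree $j$, and set $p_j:=\sup_{\|x\|\le 1}|P_j(x)|$. Since $\phi$ is entire (the coefficients $a_n/(n!)^\sigma$ decay super-exponentially as $|a_n|\le A\tau^n$), the composite is given by the absolutely convergent expansion $f=\sum_{n\ge 0}\frac{a_n}{(n!)^\sigma}P^n$, which I may reorganise by homogeneous degree; this identification already supplies the power-series representation of $f$ at $0$ required for membership in $E_\omega$. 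Using the multinomial expansion of $P^n=\big(\sum_j P_j\big)^n$, the degree-$\kappa$ part $f_\kappa$ satisfies, with $\|\cdot\|_\infty$ the supremum over the unit ball,
\[
\|f_\kappa\|_\infty\le A\sum_{n}\frac{\tau^n}{(n!)^\sigma}\sum_{\substack{|\alpha|=n\\ \sum_j j\alpha_j=\kappa}}\binom{n}{\alpha}\prod_j p_j^{\alpha_j}.
\]
The decisive observation is that the inner sum is empty unless $n\ge\kappa/d$, since a product of $n$ factors among $P_0,\dots,P_d$ has degree at most $nd$.

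Bounding $\sum_\alpha\binom{n}{\alpha}\prod_j p_j^{\alpha_j}\le (d+1)^n\max(1,p_0,\dots,p_d)^n$ and setting $R:=\tau(d+1)\max(1,p_0,\dots,p_d)$, I obtain $\|f_\kappa\|_\infty\le A\sum_{n\ge\kappa/d}R^n/(n!)^\sigma$. For $\kappa$ large, the ratio test shows this tail is at most twice its first term, so by Stirling
\[
\|f_\kappa\|_\infty\le C_{P,\phi}^{\,\kappa}\,\kappa^{-\sigma\kappa/d}
\]
for a constant $C_{P,\phi}$ depending only on $R,\sigma,d$. Converting the homogeneous-polynomial norm into a derivative norm by the polarisation inequality $\|D^\kappa f(0)\|\le \kappa^\kappa\|f_\kappa\|_\infty$, this becomes
\[
\|D^\kappa f(0)\|\le C_{P,\phi}^{\,\kappa}\,\kappa^{\kappa(1-\sigma/d)}.
\]

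Now I invoke the hypothesis $\sigma>\deg(P)=d$: the exponent $\beta:=\sigma/d-1$ is strictly positive, whence $\|D^\kappa f(0)\|\le C_{P,\phi}^{\,\kappa}\,\kappa^{-\beta\kappa}$ and therefore $\|D^\kappa f(0)\|\,\omega(\kappa)^\kappa\le\big(C_{P,\phi}\,\omega(\kappa)\,\kappa^{-\beta}\big)^\kappa$. This stays bounded as soon as $\omega(\kappa)\,\kappa^{-\beta}\to 0$, so I fix once and for all a logarithmically slow weight, e.g. $\omega(\kappa)=\log(\kappa+3)$, which lies in $(1,+\infty)$ and tends to infinity. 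The main obstacle — and the reason behind this choice — is that $\omega$ must be \emph{universal}, independent of $P$ and $\phi$, whereas the usable decay exponent $\beta=\sigma/d-1$ can be arbitrarily small and the constant $C_{P,\phi}$ arbitrarily large. Because $\log(\kappa+3)$ grows more slowly than $\kappa^\beta$ for \emph{every} fixed $\beta>0$, the quantity $C_{P,\phi}\,\omega(\kappa)\,\kappa^{-\beta}$ tends to $0$ for each admissible pair $(P,\phi)$; the finitely many initial terms are finite, so $\|f\|_\omega<\infty$ and $f\in E_\omega$.
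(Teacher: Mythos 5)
Your proof is correct and follows essentially the same route as the paper's: expand $\phi\circ P$ as a series in $P^n$, regroup by homogeneous degree, note that the degree-$\kappa$ part only receives contributions from $n\geq\kappa/d$ so that $(n!)^{-\sigma}$ yields decay of order $\kappa^{-\sigma\kappa/d}$, which beats the factor $\kappa^\kappa$ (polarisation/Stirling) exactly because $\sigma>\deg P$, and then the universal choice $\omega(\kappa)=\log(\kappa+\mathrm{const})$ works. The only cosmetic differences are that you bound the constrained multinomial sum by the unconstrained one and sum the tail via the ratio test, where the paper counts solutions of $j_1+\dots+dj_d=k$ by $k^d$; both are fine (and, like the paper, you may dismiss the trivial case $\deg P=0$ separately).
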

\begin{proof}
We write $P=P_0+\dots+P_d$, where each $P_k$ is homogeneous with degree $k$. We may assume $\deg(P)>0$. Let $B>0$ be such that
$|P_k(x)|\leq B\|x\|^k$, for any $k=0,\dots,d$. We develop $\phi\circ P$ into
\begin{eqnarray*}
\phi\circ P&=&\sum_{k=0}^{+\infty}\frac{a_k}{(k!)^\sigma}\sum_{j_0+\dots+j_d=k}P_0^{j_0}\dots P_d^{j_d}\\
&=&\sum_{j_0,\dots,j_d=0}^{+\infty}P_0^{j_0}\dots P_d^{j_d}\frac{a_{j_0+\dots+j_d}}{[(j_0+\dots+j_d)!]^\sigma}\\
&=&\sum_{l=0}^{+\infty}\sum_{\substack{j_0\geq 0\\ j_1+\dots+dj_d=l}}P_0^{j_0}\dots P_d^{j_d}\frac{a_{j_0+\dots+j_d}}{[(j_0+\dots+j_d)!]^\sigma}\\
&=:&\sum_{l=0}^{+\infty}Q_l.
\end{eqnarray*}
Each $Q_l$ is a homogeneous polynomial with degree $l$. We are looking for a function $\omega$ such that the sequence $(\|Q_k\|k!\omega(k)^k)$ is bounded. We observe that
\begin{eqnarray*}
\|Q_k\|&\leq&A\sum_{\substack{j_0\geq 0\\ j_1+\dots+dj_d=k}}\frac{(B\tau)^{j_0+\dots+j_d}}{\big((j_0+\dots+j_d)!\big)^\sigma}\\
&\leq&A\sum_{j_0\geq 0}(B\tau)^{j_0}\sum_{j_1+\dots+dj_d=k}\frac{(B\tau)^{j_1+\dots+j_d}}{\big((j_0+\dots+j_d)!\big)^\sigma}\\
&\leq&A\sum_{j_0\geq 0}(B\tau)^{j_0}\sum_{j_1+\dots+dj_d=k}\frac{(B\tau)^{k}}{\left(\left(\left[\frac kd\right]+j_0\right)!\right)^\sigma}\\
\end{eqnarray*}
(we assume, expanding $B$ if necessary, that $B\tau\geq 1$). Now, the number of solutions of the equation 
$j_1+\dots+dj_d=k$, $j_i\geq 0$, is less than $k ^d$. We deduce the existence of some positive constant $\tau_1$ (depending on $d$)
such that
$$\|Q_k\|\leq A\tau_1^k\sum_{j_0\geq 0}\frac{(B\tau)^{j_0}}{\left(\left(\left[\frac kd\right]+j_0\right)!\right)^\sigma}.$$
Now, for $k$ sufficiently large and for any $j_0\geq 0$, 
$$\frac{(B\tau)^{j_0}}{\left(\left(\left[\frac kd\right]+j_0\right)!\right)^\sigma}\leq \left(\frac12\right)^{j_0}\times\frac{1}{\left(\left[\frac kd\right]!\right)^\sigma}.$$
This yields
$$\|Q_k\|k!\leq A'\frac{\tau_1^kk!}{\left(\left[\frac kd\right]!\right)^\sigma}.$$
We now set $\omega(k)={\log (k+e)}$ (which does not depend on anything!) and observe that, by Stirling's formula, the sequence
$( \|Q_k\|k!\omega(k)^k)$ is bounded.
\end{proof}

\subsection{Statement of the results and examples}
Having described the spaces of functions where we expect that a central limit theorem holds, we can now give our main statements.
We begin with an abstract result.
\begin{theorem}\label{THMMAINTCL}
Let $T\in\LX$ and let $\omega:\mathbb N\to(1,+\infty)$ going to $+\infty$. Suppose that, for any function $\omega_0:[1,+\infty)\to(1,+\infty)$ going to infinity, one can find a sequence $\mathcal D=(x_n)_{n\geq 1}\subset X$, a sequence $(\veps_k)\subset\ell^1(\ZZ)$
and a  sequence of maps $S_n:\mathcal D\to X$, $n\geq 0$, such that
\begin{enumerate}[(i)]
\item For any $x\in\mathcal D$, $\sum_{n\geq 0} T^nx$ converges unconditionally;
\item For any $x\in\mathcal D$, $\sum_{n\geq 0} S_n x$ converges unconditionally;
\item For any $x\in\mathcal D$, $T^n S_n x= x$ and $T^mS_n x=S_{n-m}x$ for any $n>m$;
\item For any sequence $(n_k)\subset\NN^{\ZZ}$ such that $\sum_{k\geq 0}T^k x_{n_k}$ and $\sum_{k<0}S_{-k} x_{n_k}$
are convergent, 
$$\left\| \sum_{k\geq 0}T^k x_{n_k}\right\|+\left\|\sum_{k<0}S_{-k} x_{n_k}\right\|\leq \sum_{k\in\ZZ}\omega_0(n_k)\veps_k.$$
\item $\textrm{span}\big(\{T^k x_n;S_k x_n\};\ k\geq 0,\ n\geq 1\big)$ is dense in $X$.
\end{enumerate}
Then there exists a $T$-invariant strongly mixing Borel probability measure $\mu$ on $X$
with full support such that $E_\omega \subset L^2(X,\mathcal B,\mu)$.\\
Suppose moreover that there exists $\alpha>1/2$ such that, for any $x\in\mathcal D$, for any $n,k\geq 0$,
$$\|T^k x_n\|\leq\frac{\omega_0(n)}{(1+k)^\alpha}\textrm{ and }\|S_k x_n\|\leq\frac{\omega_0(n)}{(1+k)^{\alpha}}.$$
Then, for any $f,g\in E_\omega$,
$$|\cov(f\circ T^n,g)|\leq C_{f,g,\alpha}
\left\{
\begin{array}{ll}
n^{1-2\alpha}&\textrm{ if }\alpha\in(1/2,1)\\
\displaystyle \frac{\log (n+1)}n&\textrm{ if }\alpha=1\\
n^{-\alpha}&\textrm{ if }\alpha>1.
\end{array}\right.
$$
If $\alpha>1$, then for any $f\in E_\omega$ with zero mean, the sequence $\dis\frac{1}{\sqrt n}(f+\dots+f\circ T^{n-1})$
converges in distribution to a Gaussian random variable of zero mean and finite variance.
\end{theorem}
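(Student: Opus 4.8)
The plan is to prove the central limit theorem by combining the method of moments on finite-order truncations with an $L^2$ approximation argument, the decisive structural fact being that $\mu$ is a \emph{Gaussian} measure, so that $L^2(\mu)$ carries a Wiener chaos decomposition and all moment computations are governed by the underlying Gaussian field. The only genuinely dynamical input is the covariance estimate already proved: when $\alpha>1$ the bound $|\cov(f\circ T^k,f)|\le C_{f,f,\alpha}k^{-\alpha}$ makes the correlations absolutely summable, which immediately yields, with $S_n=f+\dots+f\circ T^{n-1}$ and using that $\mu$ is $T$-invariant,
$$\frac1n\,\mathrm{Var}(S_n)=\sum_{|k|<n}\frac{n-|k|}{n}\,\cov\big(f\circ T^{|k|},f\big)\longrightarrow\cov(f,f)+2\sum_{k\ge1}\cov\big(f\circ T^{k},f\big)=:\sigma^2,$$
a finite nonnegative number. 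The target law is $N(0,\sigma^2)$.

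I would organize the moment computation through the chaos expansion $f=\sum_{d\ge1}f_d$, where $f_d$ lies in the $d$-th homogeneous Gaussian chaos $\mathcal H_d$ and the term $d=0$ is absent since $f$ has zero mean. As $T$ is linear and $\mu$-preserving, its Koopman operator is an $L^2$-isometry mapping the first chaos $\mathcal H_1$ (the closure of the linear functionals) into itself, hence leaving each $\mathcal H_d$ invariant, so $f_d\circ T^j\in\mathcal H_d$. The first chaos needs no argument: if $f\in\mathcal H_1$ then $S_n$ is a centered Gaussian random variable for each $n$, so $S_n/\sqrt n$ is exactly $N\big(0,\mathrm{Var}(S_n)/n\big)$ and converges to $N(0,\sigma^2)$. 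For a truncation $f^{[D]}=\sum_{d\le D}f_d$, which lies in a finite sum of chaoses and is therefore in every $L^p(\mu)$ by Gaussian hypercontractivity, I would compute $\mathbb E\big[(S_n^{[D]}/\sqrt n)^p\big]$ by expanding into products of the underlying Gaussian variables and applying Wick's theorem, or equivalently by the cumulant/diagram expansion. Summability of the correlations is exactly what makes the graph sums converge: each cluster of a diagram contributes one free summation index, hence a single factor $n$ (the remaining differences being summed against absolutely convergent correlations), so a partition into $m$ pairs yields $n^{m}$ while any diagram with a cluster of three or more factors has fewer clusters and is $o(n^{p/2})$. Thus $\mathbb E\big[(S_n^{[D]}/\sqrt n)^{2m}\big]\to(2m-1)!!\,\sigma_D^{2m}$ and the odd moments vanish; since the normal law is determined by its moments, this is the central limit theorem for $N(0,\sigma_D^2)$, where $\sigma_D^2=\lim_n\mathrm{Var}(S_n^{[D]})/n$. (Alternatively one invokes the Nualart--Peccati fourth moment theorem chaos by chaos.)

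It remains to remove the truncation. By orthogonality of the $\mathcal H_d$ one has $\sigma_D^2\to\sigma^2$, so the point is to show the tail contributes negligibly to the limiting variance, namely $\lim_{D\to\infty}\limsup_n \mathrm{Var}(S_n^{>D})/n=0$ with $f^{>D}=f-f^{[D]}$. For this I would use two bounds on $\cov(f^{>D}\circ T^k,f^{>D})$: the Cauchy--Schwarz bound $\|f^{>D}\|_{L^2}^2$, which tends to $0$ as $D\to\infty$ since the chaos series converges in $L^2$, and a quantitative form of the covariance estimate whose constant is controlled by the $E_\omega$-norm and whose decay is $k^{-\alpha}$. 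Splitting the sum over $k$ at a threshold $M$, using the first bound for $|k|\le M$ and the second for $|k|>M$, gives (because $\alpha>1$) a bound of the shape $C M\,\|f^{>D}\|_{L^2}^2+C' M^{1-\alpha}$, made arbitrarily small by choosing $M$ then $D$ large. A standard approximation lemma --- the distributional limit is unchanged under an $L^2$-perturbation of uniformly small normalized variance --- then upgrades the truncated statements to $S_n/\sqrt n\Rightarrow N(0,\sigma^2)$.

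The main obstacle is precisely this reassembly. The diagram estimates for a fixed truncation are routine once the correlations are summable, but controlling the chaos tail uniformly in $n$ requires a covariance bound with explicit dependence on the functions --- ideally $|\cov(g\circ T^k,h)|\le C_\alpha\|g\|_\omega\|h\|_\omega\,k^{-\alpha}$ --- together with the fact that the Wick projections $f^{>D}$ stay bounded in $E_\omega$ while tending to $0$ in $L^2$. This is also where $\alpha>1$ is essential rather than the weaker $\alpha>1/2$ that suffices for the covariance decay: summability is needed both for a finite limiting variance and for every graph sum to converge. Establishing the quantitative covariance estimate on the chaos tail, and the integrability bookkeeping linking $E_\omega$ to the Gaussian $L^p$-norms, are the technical heart of the argument.
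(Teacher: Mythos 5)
Your argument rests on the premise that the invariant measure $\mu$ is Gaussian, and everything downstream --- the Wiener chaos decomposition of $L^2(\mu)$, invariance of each chaos $\mathcal H_d$ under the Koopman operator, Wick's theorem for the moment/diagram expansion, hypercontractivity for the $L^p$ bounds on truncations, and the Nualart--Peccati fourth moment theorem --- depends on that premise. But the measure constructed in the theorem is \emph{not} Gaussian: it is the push-forward, under the map $\phi\big((n_k)\big)=\sum_{k\in\ZZ}T^k x_{n_k}$, of a Bernoulli product measure $\bar\mu=\bigotimes_k\bar\mu_k$ on $\NN^\ZZ$ (the Murillo-Arcila--Peris construction; the paper stresses explicitly that this measure is not Gaussian and that it deliberately avoids the Gaussian measure of Theorem \ref{THMERGOBEST}). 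Without Gaussianity there is no chaos decomposition and no Wick calculus, so your moment computation has no foundation; and the well-known fallacy lurking here is that summable \emph{pair} correlations alone never imply a CLT for a general stationary sequence --- one needs control of all higher-order joint cumulants, which is exactly what Wick's theorem would have supplied for free and what is unavailable for this measure. Even your warm-up claim that $S_n$ is exactly Gaussian for $f$ in the ``first chaos'' fails here, since linear functionals are not Gaussian under $\mu$.

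The actual proof goes by the martingale method. One conjugates $T$ to the shift $\sigma$ on $(\NN^\ZZ,\bar\mu)$ via $\phi$, takes the canonical filtration $\mathcal F_i=\sigma^{-i}(\mathcal F_0)$ with $\mathcal F_0$ generated by the coordinates of nonnegative index, and applies Voln\'y's nonadapted version of the Maxwell--Woodroofe criterion (Theorem A): it suffices to show $\sum_n n^{-3/2}\|\mathbb E(S_n(F)|\mathcal F_0)\|_2<\infty$ and $\sum_n n^{-3/2}\|S_n(F)-\mathbb E(S_n(F)|\mathcal F_n)\|_2<\infty$ for $F=f\circ\phi$. These conditional expectations are computed explicitly in a triangular orthonormal basis $(e_{\mathbf l,\mathbf j})$ of $L^2(\NN^\ZZ,\bar\mu)$ adapted to the product structure, and the Fourier-coefficient bounds of Lemma \ref{LEMSUMFOURIER} (which encode the decay $\|T^kx_n\|\leq\omega_0(n)(1+k)^{-\alpha}$) give $\|\mathbb E(S_n(F)|\mathcal F_0)\|_2^2=O(\max(n^{3-2\alpha},\log(n+1)))$, which is summable against $n^{-3/2}$ precisely when $\alpha>1$. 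If you want to salvage a moments-based route you would have to estimate all higher cumulants of $S_n$ directly from the Bernoulli structure, which is essentially a different (and harder) proof; the covariance estimate you take as input is nowhere near sufficient by itself.
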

 Of course, this statement does not look very appealing and we shall not prove it immediately. We prefer to give two more readable corollaries. The first one deals with stronger forms of unconditionality.

\begin{theorem}\label{THMMAINTCL2}
Let $T\in\LX$ and let $\omega:\mathbb N\to(1,+\infty)$ going to $+\infty$. Suppose that there exist $\alpha>1$, a dense set $\mathcal D\subset X$ and a  sequence of maps $S_n:\mathcal D\to X$, $n\geq 0$, such that, for any $x\in\mathcal D$,
\begin{enumerate}[(i)]
\item $T^n S_n x= x$ and $T^mS_n x=S_{n-m}x$ for any $n>m$;
\item $\|T^n x\|=O(n^{-\alpha})$ and $\|S_n x\|=O(n^{-\alpha})$.
\end{enumerate}
Then there exists a $T$-invariant strongly mixing Borel probability measure $\mu$ on $X$
with full support with $E_\omega \subset L^2(X,\mathcal B,\mu)$ and such that for any $f,g\in E_\omega$,
$$|\cov(f\circ T^n,g)|\leq C_{f,g,\alpha} n^{-\alpha}.
$$
Moreover,  for any $f\in E_\omega$ with zero mean, the sequence $\dis\frac{1}{\sqrt n}(f+\dots+f\circ T^{n-1})$
converges in distribution to a Gaussian random variable of zero mean and finite variance.
\end{theorem}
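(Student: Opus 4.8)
The plan is to deduce Theorem~\ref{THMMAINTCL2} from the abstract Theorem~\ref{THMMAINTCL}. The algebraic relations in hypothesis~(i) of Theorem~\ref{THMMAINTCL2} are exactly condition~(iii) of Theorem~\ref{THMMAINTCL}, so the real work is to manufacture, from the single dense set $\mathcal D$ together with its \emph{pointwise} decay estimate~(ii), the \emph{uniformly weighted} data that Theorem~\ref{THMMAINTCL} demands for every prescribed gauge $\omega_0$. First I would pass to a countable model: since $X$ is separable, $\mathcal D$ contains a countable subset $\{y_m\}_{m\geq 1}$ which is still dense in $X$. For each $m$, hypothesis~(ii) guarantees that
$$C_m:=\sup_{k\geq 0}(1+k)^\alpha\max\big(\|T^k y_m\|,\|S_k y_m\|\big)<+\infty,$$
so that $\|T^k y_m\|\leq C_m(1+k)^{-\alpha}$ and $\|S_k y_m\|\leq C_m(1+k)^{-\alpha}$ for all $k\geq 0$.

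Now fix an arbitrary $\omega_0:[1,+\infty)\to(1,+\infty)$ tending to infinity. The heart of the argument is to build the sequence $(x_n)_{n\geq 1}$ required by Theorem~\ref{THMMAINTCL} by placing each $y_m$ far enough out that its decay constant has been overtaken by the gauge. Concretely, since $\omega_0(n)\to+\infty$, for every $m$ the set $\{n:\ \omega_0(n)\geq C_m\}$ is cofinite, so one can choose pairwise distinct positions $n(m)$ with $\omega_0(n(m))\geq C_m$ and set $x_{n(m)}=y_m$; every remaining position $n$ is filled either with some $y_m$ whose constant satisfies $C_m\leq\omega_0(n)$ or, when no such vector is available, with the zero vector (which we may adjoin to $\mathcal D$, setting $S_k0=0$, without disturbing (i)--(iii)). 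By construction each $y_m$ occurs in the sequence, so $\{x_n\}$ is dense, and for every $n$ the vector $x_n$ has decay constant at most $\omega_0(n)$, that is,
$$\|T^k x_n\|\leq\frac{\omega_0(n)}{(1+k)^\alpha}\quad\text{and}\quad\|S_k x_n\|\leq\frac{\omega_0(n)}{(1+k)^\alpha}\qquad(n\geq 1,\ k\geq 0).$$

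With this sequence in hand the verification of the five conditions of Theorem~\ref{THMMAINTCL} is routine. Since $\alpha>1$ the series $\sum_k(1+k)^{-\alpha}$ converges, so for each fixed vector $\sum_{n\geq 0}T^n x$ and $\sum_{n\geq 0}S_n x$ converge absolutely, hence unconditionally, giving (i) and (ii); condition (iii) is inherited unchanged; and (v) holds because $\vect\{T^k x_n;\,S_k x_n\}$ already contains the dense family $\{y_m\}$. For (iv) I would take $\veps_k=(1+|k|)^{-\alpha}$, which lies in $\ell^1(\ZZ)$ precisely because $\alpha>1$; the displayed estimates give the termwise bounds $\|T^k x_{n_k}\|\leq\omega_0(n_k)\veps_k$ and $\|S_{-k}x_{n_k}\|\leq\omega_0(n_k)\veps_k$, so the triangle inequality yields
$$\Big\|\sum_{k\geq 0}T^k x_{n_k}\Big\|+\Big\|\sum_{k<0}S_{-k}x_{n_k}\Big\|\leq\sum_{k\in\ZZ}\omega_0(n_k)\veps_k.$$
Finally, the growth hypothesis in the second half of Theorem~\ref{THMMAINTCL} is exactly the displayed estimate, with the same $\alpha>1>1/2$. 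Applying Theorem~\ref{THMMAINTCL} therefore produces the $T$-invariant strongly mixing measure $\mu$ with full support and $E_\omega\subset L^2(X,\mathcal B,\mu)$, and since $\alpha>1$ its conclusion specializes to $|\cov(f\circ T^n,g)|\leq C_{f,g,\alpha}n^{-\alpha}$ together with the central limit theorem for zero-mean $f\in E_\omega$.

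I expect the only genuine obstacle to be this enumeration step: one must convert the $x$-dependent $O(n^{-\alpha})$ bounds into a single uniform weighted bound governed by an \emph{arbitrary} gauge $\omega_0$, while checking that density is preserved and that each position $n$ only ever carries a vector whose decay constant has already been dominated by $\omega_0(n)$. Once this bookkeeping is arranged, the passage through Theorem~\ref{THMMAINTCL} is purely mechanical.
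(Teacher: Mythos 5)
Your proposal is correct and follows essentially the same route as the paper: both reduce Theorem~\ref{THMMAINTCL2} to Theorem~\ref{THMMAINTCL} by choosing a dense sequence $(x_n)$ whose decay constants are dominated by $\omega_0(n)$ and taking $\veps_k=(1+|k|)^{-\alpha}$. The only difference is that you spell out the enumeration/placement argument that produces such a sequence, which the paper simply asserts; this is a harmless (indeed welcome) elaboration, not a divergence in method.
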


\begin{proof}
First, the assumptions imply the unconditional convergence of the series $\sum_n T^n x$ and $\sum_n S_n x$ for any $x\in\mathcal D$. Moreover, let $\omega_0:\mathbb N\to(1,+\infty)$ going to $+\infty$. Let $(x_n)_{n\geq 1}$ be a dense sequence in $\mathcal D$ such that, for any $n\geq 1$, $k\geq 0$, 
$$\|T^k x_n\|\leq\frac{\omega_0(n)}{(1+k)^\alpha}\textrm{ and }\|S_k x_n\|\leq\frac{\omega_0(n)}{(1+k)^\alpha}.$$
Then for any sequence $(n_k)\subset\NN^{\ZZ}$, 
$$\left\| \sum_{k\geq 0}T^k x_{n_k}\right\|+\left\|\sum_{k<0}S_{-k} x_{n_k}\right\|\leq \sum_{k\in\ZZ}\frac{\omega_0(n_k)}{(1+|k|)^\alpha},$$
so that the assumptions of Theorem \ref{THMMAINTCL2} are satisfied with $\veps_k=(1+|k|)^{-\alpha}$.
\end{proof}

The previous theorem may be applied to many examples where we already know that Conditions (i) and (ii) hold. For instance, adjoints of multipliers
or composition operators associated to hyperbolic automorphisms of the disk, acting on the Hardy space $H^2(\DD)$ satisfy the assumptions of Theorem \ref{THMMAINTCL2}. We refer to \cite[Chapter 6]{BM09} for a
description of these examples.

\begin{remark}
When we work on a Hilbert space, we can compare the speed of convergence to zero of the sequence of covariances
given by the measure obtained in Theorem \ref{THMMAINTCL2} with that given by Devinck's theorem.
 Indeed, the assumptions of Theorem \ref{THMMAINTCL2} imply the existence
of a sequence of perfectly spanning $\TT$-eigenvectorfields: for any $x\in\mathcal D$, put $E(\lambda)=\sum_{n\in\ZZ}\lambda^n T^n x$. If we assume that $\|T^n x\|=O(n^{-\alpha})$ and $\|S_n x\|=O(n^{-\alpha})$ for any $x\in\mathcal D$ and some
$\alpha\in(1,2)$, then one can show that $E$ is $(\alpha-1)$-H\"olderian: pick any $\lambda,\mu\in\TT$ and let $n\in\NN$ be such that
$(n+1)^{-1}<|\lambda-\mu|\leq n^{-1}$. Then
\begin{eqnarray*}
\|E(\lambda)-E(\mu)\|&\leq&C\sum_{k=1}^n k|\lambda-\mu|k^{-\alpha}+C\sum_{k>n}k^{-\alpha}\\
&\leq&C|\lambda-\mu|n^{2-\alpha}+Cn^{1-\alpha}\\
&\leq&C|\lambda-\mu|^{\alpha-1}.
\end{eqnarray*}
Thus, with the assumptions of Theorem \ref{THMMAINTCL2}, we may also apply Devinck's theorem to obtain an ergodic measure such that, 
for any $f,g\in\mathcal P$, 
$$|\cov(f\circ T^n,g)|\leq Cn^{-(\alpha-1)}.$$
This is less good than the result we obtain by applying directly Theorem \ref{THMMAINTCL2}.

Conversely, let us assume that the assumptions of Devinck's theorem are satisfied. We set 
$$\mathcal D=\vect\left(\int_{\TT}\lambda^p E(\lambda)d\lambda;\ p\in\mathbb Z\right),$$ 
which is dense in $H$. We define $S_n$ on $\mathcal D$ by
$$S_n\left( \int_{\TT}\lambda^p E(\lambda)d\lambda\right)=\int_{\TT}\lambda^{p-n} E(\lambda)d\lambda$$
so that $S_nT^m=S_{n-m}$ on $\mathcal D$, for any $n,m\geq 0$. It is not hard to show that, for any $x\in\mathcal D$,
$\|T^n x\|=O(n^{-\alpha})$ and $\|S_n x\|=O(n^{-\alpha})$. Indeed, 
\begin{eqnarray*}
T^n \left( \int_{\TT}\lambda^p E(\lambda)d\lambda\right)&=&\int_{0}^{2\pi}e^{i(n+p)\theta} E(e^{i\theta})\frac{d\theta}{2\pi}\\
&=&\frac 12\int_{0}^{2\pi}e^{i(n+p)\theta} \left(E(e^{i\theta})-E\left(e^{i\left(\theta+\frac \pi{p+n}\right)}\right)\right)\frac{d\theta}{2\pi}.
\end{eqnarray*}
by a change of variables. Since $E$ is assumed to be $\alpha$-H\"olderian, this easily implies that $\|T^n x\|=O(n^{-\alpha})$ 
for any $x\in\mathcal D$. Theorem \ref{THMMAINTCL2} and Devinck's theorem then give the same decay of correlations. 
However, we are sure that we can apply Theorem \ref{THMMAINTCL2} only if $\alpha>1$, and we cannot apply it if $\alpha<1/2$. 

Thus, in the Hilbert space
setting, it is not so easy to decide which measure has better mixing properties. Moreover, we do not know whether we can compare our class $E_\omega$ with the classes $\mathcal X$ and $\mathcal Y$ of Devinck. We also point out that the way we construct $E$ from the assumptions of Theorem \ref{THMMAINTCL2} or conversely the way we construct $\mathcal D$ from the assumptions of Devinck's theorem are often not optimal. In concrete situations, a more natural choice of $E$ (resp. of $\mathcal D$) can improve the results we get automatically in this remark. In these concrete situations, our results seem better; see the forthcoming  examples \ref{EXBS} and \ref{EXCO}.
\end{remark}

\subsection{Bilateral weighted shifts}

We now apply Theorem \ref{THMMAINTCL} to backward weighted shift operators on $\ell^p(\mathbb Z_+)$, $p\geq 1$.
Let $\mathbf w=(w_n)_{n\in\mathbb Z_+}$ be a bounded sequence in $\mathbb R_+$. The bilateral weighted shift $B_{\mathbf w}$ on 
$\ell^p(\mathbb Z_+)$ is defined by $B_{\mathbf w}\big((x_n)\big)=(w_{n+1}x_{n+1})$.
We know by \cite{BAYRUZSA} that there exists a $B_{\mathbf w}$-invariant ergodic Borel probability measure on $\ell^p(\mathbb Z_+)$ with full support
iff $\sum_{n\geq 1}(w_1\dots w_n)^{-p}<+\infty$. 
\begin{theorem}\label{THMTCLBS}
Let $\omega:\mathbb N\to(1,+\infty)$ going to $+\infty$ and let $B_{\mathbf w}$ be a bounded backward weighted shift
on $\ell^p(\mathbb Z_+)$. Suppose moreover that $\sum_{n\geq 1}(w_1\dots w_n)^{-p}<+\infty$.
Then there exists a  $B_{\mathbf w}$-invariant strongly mixing Borel probability measure on $\ell^p(\mathbb Z_+)$ with full support
such that $E_\omega\subset L^2(X,\mathcal B,\mu)$.

Suppose moreover that there exists $C>0$ and $\alpha>1/2$ such that, for any $n\geq 1$,
$$w_1\cdots w_n\geq Cn^{\alpha}.$$
Then for any $f,g\in E_\omega$, $$|\cov(f\circ B_{\mathbf w}^n,g)|\leq C_{f,g,\alpha}
\left\{
\begin{array}{ll}
n^{1-2\alpha}&\textrm{ if }\alpha\in(1/2,1)\\
\displaystyle \frac{\log (n+1)}n&\textrm{ if }\alpha=1\\
n^{-\alpha}&\textrm{ if }\alpha>1.
\end{array}\right.
$$
If $\alpha>1$, then for any $f\in E_\omega$ with zero mean, the sequence $\dis\frac{1}{\sqrt n}(f+\dots+f\circ B_{\mathbf w}^{n-1})$
converges in distribution to a Gaussian random variable of zero mean and finite variance.
\end{theorem}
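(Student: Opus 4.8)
The plan is to verify the hypotheses of Theorem \ref{THMMAINTCL} for $T=B_{\mathbf w}$ acting on $\ell^p(\mathbb Z_+)$, from which all three conclusions follow at once. Write $W_0=1$ and $W_n=w_1\cdots w_n$, and let $(e_n)_{n\ge 0}$ be the canonical basis, so that $B_{\mathbf w}^k e_n=\frac{W_n}{W_{n-k}}e_{n-k}$ if $k\le n$ and $B_{\mathbf w}^k e_n=0$ otherwise. The natural candidates for the maps $S_k$ are the weighted forward shifts $S_k e_n=\frac{W_n}{W_{n+k}}e_{n+k}$, extended by linearity; a direct computation on basis vectors gives $B_{\mathbf w}^kS_k=\mathrm{Id}$ and $B_{\mathbf w}^mS_k=S_{k-m}$ for $k>m$, which is condition (iii).

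For the dense set I would take $\mathcal D$ to be the finitely supported vectors with coordinates in a fixed countable dense subset of $\RR$. Conditions (i) and (v) are then immediate: $\sum_k B_{\mathbf w}^k x$ is a finite sum for every finitely supported $x$ (since $B_{\mathbf w}^k e_n=0$ once $k>n$), and $S_0=\mathrm{Id}$ puts every $e_n$ in the relevant span. Condition (ii) is exactly where the standing hypothesis enters: the vectors $(S_k e_n)_{k\ge 0}$ are pairwise disjointly supported, so $\sum_k\|S_k e_n\|^p=W_n^p\sum_{i\ge n}W_i^{-p}<+\infty$ by the assumption $\sum_{n\ge1}W_n^{-p}<+\infty$, whence $\sum_k S_k x$ converges unconditionally.

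The heart of the matter is condition (iv): given $\omega_0$ going to infinity, I must produce a sequence $(\veps_k)\in\ell^1(\ZZ)$ and an enumeration $(x_n)$ of $\mathcal D$ for which the displayed inequality holds for every index sequence $(n_k)$. The guiding idea is to let the growth of $\omega_0$ absorb the size of the vectors: introducing the weighted sup-norm $\|x\|_w=\sup_m|c_m|W_m$ for $x=\sum_m c_m e_m$, I would enumerate $\mathcal D$ so that $\|x_n\|_w$ (together with the finitely many quantities appearing below) stays below $\omega_0(n)$, which is possible because $\omega_0\to+\infty$ and each vector can be placed at a position where $\omega_0$ is as large as required. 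The main difficulty is that the naive triangle inequality is too lossy: one only gets $\|S_k x\|\le\|x\|_w\big(\sum_{i\ge k}W_i^{-p}\big)^{1/p}$, and these tails need not be summable, so a termwise bound cannot by itself yield an $\ell^1$ sequence $\veps$. Instead I would estimate the $\ell^p$-norm of $\sum_{k<0}S_{-k}x_{n_k}$ directly on each coordinate, where it appears as a finite weighted convolution of the coordinates of the $x_{n_k}$; Minkowski's inequality together with $\sum_n W_n^{-p}<+\infty$ then converts this into a bound of the required form $\sum_k\omega_0(n_k)\veps_k$ with $\veps\in\ell^1(\ZZ)$, the disjointness of supports being what rescues summability. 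The backward part $\sum_{k\ge0}B_{\mathbf w}^k x_{n_k}$ is handled similarly but more easily, since for finitely supported vectors it is a genuinely finite sum. This estimate, with the accompanying bookkeeping on the enumeration, is the step I expect to be the real obstacle. Granting it, the first conclusion (strong mixing, full support, $E_\omega\subset L^2$) follows from the first part of Theorem \ref{THMMAINTCL}.

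For the quantitative statement I would use the additional hypothesis $W_n\ge Cn^\alpha$ to feed the second part of Theorem \ref{THMMAINTCL}. Since $W_{m+k}\ge C(m+k)^\alpha\ge C'(1+k)^\alpha$, one gets $\|S_k x\|\le C''\|x\|_w(1+k)^{-\alpha}$, so after the enumeration $\|S_k x_n\|\le\omega_0(n)(1+k)^{-\alpha}$. For the backward maps the bound is even more direct: $B_{\mathbf w}^k x_n$ vanishes for $k$ larger than the finite support $M_n$ of $x_n$ and is bounded by a constant $\Lambda_n$ otherwise, so enumerating with $\omega_0(n)\ge\Lambda_n(1+M_n)^\alpha$ yields $\|B_{\mathbf w}^k x_n\|\le\omega_0(n)(1+k)^{-\alpha}$ for all $k$. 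These are exactly the hypotheses of the second part of Theorem \ref{THMMAINTCL} with the given $\alpha$, which delivers the stated trichotomy for $\cov(f\circ B_{\mathbf w}^n,g)$ and, when $\alpha>1$, the central limit theorem for every zero-mean $f\in E_\omega$. One has only to check that a single enumeration can be chosen to satisfy simultaneously the thresholds required for (iv) and for these decay estimates, which is again possible because $\omega_0\to+\infty$.
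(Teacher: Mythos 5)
Your overall strategy (verify the hypotheses of Theorem \ref{THMMAINTCL}) is the right one, and your treatment of conditions (i)--(iii), (v) and of the decay estimates in the second half is fine. But the proof has a genuine gap exactly where you predicted one: condition (iv) for your choice of $\mathcal D$. With $\mathcal D$ equal to all finitely supported vectors, the vectors $S_{-k}x_{n_k}$, $k<0$, are \emph{not} disjointly supported --- for instance $S_2e_0$ and $S_1e_1$ both live on the coordinate $2$ --- so the disjointness you invoke to ``rescue summability'' is simply not there. The obstruction is real: a termwise bound gives only $\sum_k\|S_kx\|\lesssim\sum_kW_k^{-1}$ (with $W_k=w_1\cdots w_k$), which may diverge even though $\sum_kW_k^{-p}<+\infty$ when $p>1$, and the overlaps between the supports of the $S_{-k}x_{n_k}$ have size governed by the support lengths $M_{n_k}$, which are unbounded over admissible sequences $(n_k)$. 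Your sketch via ``Minkowski plus convolution structure'' does not control these overlaps uniformly, and no choice of a single $\ell^1$ sequence $(\veps_k)$ is exhibited. So the first (and hence every) conclusion is not established as written.

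The paper sidesteps all of this with one observation you missed: condition (v) of Theorem \ref{THMMAINTCL} only requires that $\mathrm{span}\{T^kx_n,\,S_kx_n\}$ be dense, \emph{not} that $\mathcal D$ itself be dense. It therefore takes $x_n=\alpha_ne_0$ with $(\alpha_n)$ dense in $\mathbb K$ and $|\alpha_n|^p\leq\omega_0(n)$, and $S_ne_0=\frac1{W_n}e_n$. Then $B_{\mathbf w}^kx_{n_k}=0$ for $k\geq1$ (so the forward sum is the single term $\alpha_{n_0}e_0$), the vectors $S_{-k}x_{n_k}$ \emph{are} disjointly supported, and
$$\Bigl\|\sum_{k<0}S_{-k}x_{n_k}\Bigr\|=\Bigl(\sum_{k<0}\frac{|\alpha_{n_k}|^p}{W_{-k}^{\,p}}\Bigr)^{1/p}\leq C_{\mathbf w,p}\sum_{k<0}\frac{\omega_0(n_k)}{W_{-k}^{\,p}},$$
the last step using that the sum inside is bounded below, so that (iv) holds with $\veps_k=W_{|k|}^{-p}\in\ell^1$ by hypothesis. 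Density of $\mathrm{span}(S_kx_n)$ follows from the density of $(\alpha_n)$. If you want to keep your larger $\mathcal D$ you would have to supply the uniform-overlap estimate; the economical fix is to shrink $\mathcal D$ to the multiples of $e_0$ as above.
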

\begin{proof}
Let $\omega_0:\mathbb N\to(1,+\infty)$ going to infinity. Let $(\alpha_n)_{n\geq 1}$ be a dense sequence in $\mathbb K$ with 
$|\alpha_n^p|\leq \omega_0(n)$. We set $\mathcal D=(x_n)_{n\geq 1}$, with $x_n=\alpha_n e_0$, where $(e_l)_{l\geq 0}$ is the standard basis of $\ell^p(\mathbb Z_+)$.  We define $S_n$
on $\mathcal D$ by $S_n(e_0)=\frac 1{w_1\cdots w_n}e_{n}$. The unconditional convergence of $\sum_n S_n x$ for any $x\in\mathcal D$ follows from the convergence of  $\sum_n (w_1\dots w_n)^{-p}$ whereas $B_{\mathbf w}^n x=0$ for any $x\in\mathcal D$ and any $n\geq 1$! Moreover, since $(\alpha_n)$ is dense
in $\mathbb K$, $\textrm{span}(S_k x_n;\ k\geq 0,n\geq 1)$ is dense in $X$. As observed above, $\sum_{k\geq 0} B_{\mathbf w}^k x_{n_k}=\alpha_{n_0}e_0$. 
Regarding $\sum_{k<0}S_{-k}x_{n_k}$, we just write
\begin{eqnarray*}
\left\|\sum_{k< 0}S_{-k} x_{n_k}\right\|&=&\left\|\sum_{k<0}\alpha_{n_k}e_{-k}\right\|\\
&=&\left(\sum_{k<0}\frac{|\alpha_{n_k}|^p}{(w_1\dots w_{-k})^p}\right)^{1/p}\\
&\leq&\left(\sum_{k<0}\frac{\omega_0(n_k)}{(w_1\dots w_{-k})^p}\right)^{1/p}.
\end{eqnarray*}
Since $\frac{\omega_0(n_{-1})}{w_1}\geq \frac1{w_1}$ and $\frac 1p\leq 1$, this in turn yields
$$\left\|\sum_{k< 0}S_{-k} x_{n_k}\right\|\leq C_{\mathbf w,p}\sum_{k<0}\frac{\omega_0(n_k)}{(w_1\dots w_{-k})^p}.$$

Thus, the assumptions of Theorem \ref{THMMAINTCL} are satisfied. Moreover, when $w_1\cdots w_n\geq Cn^{\alpha}$,
it is easy to check that, for any $x\in\mathcal D$, $\|T^n x\|=O(n^{-\alpha})$ and $\|S_n x\|=O(n^{-\alpha})$, allowing
to apply the results regarding the sequence of covariances and the central limit theorem.
\end{proof}
\begin{example}\label{EXBS}Let us now apply Theorem \ref{THMTCLBS} to backward shifts on $\ell^2(\mathbb Z_+)$. Let $\alpha\in(0,1)$ and let $B_{\mathbf w}$  be the weighted backward shift
on $\ell^2(\mathbb Z_+)$ with weight sequence $w_n=\left(\frac n{n-1}\right)^{\alpha+1/2}$, $n\geq 2$, $w_1=1$.
An associated perfectly spanning
$\TT$-eigenvectorfield is 
$$E(\lambda)=\sum_{n\geq 0}\frac{\lambda^n}{w_1\dots w_n}e_n,$$
where $(e_n)$ is the standard basis of $\ell^2(\mathbb Z_+)$.
$E$ is $\alpha$-H\"older (see \cite{Dev13}). Thus we know that there exists an ergodic Gaussian measure $\mu$ on $\ell^2(\mathbb Z_+)$ such that, 
for any $f,g\in\mathcal P$, 
$$|\cov(f\circ B_{\mathbf w}^n,g)|\leq C_{f,g,\alpha}n^{-\alpha}.$$
On the other hand, we may also apply Theorem \ref{THMTCLBS} to get an ergodic measure $\mu$ such that, for any $f,g\in\mathcal P$, 
$$|\cov(f\circ B_{\mathbf w}^n,g)|\leq C_{f,g,\alpha}
\left\{
\begin{array}{ll}
n^{-2\alpha}&\textrm{ if }\alpha\in(0,1/2)\\
\displaystyle \frac{\log (n+1)}n&\textrm{ if }\alpha=1/2\\[2mm]
n^{-\alpha-1/2}&\textrm{ if }\alpha>1/2.
\end{array}\right.
$$
In all cases, the speed of mixing is better.
\end{example}

\subsection{Composition operators associated to parabolic automorphisms}
The unpleasant condition (iv) in Theorem \ref{THMMAINTCL} is useful to handle operators such that $\sum_n T^n x$ and $\sum_n S_n x$ converge unconditionally
whereas $\sum_n \|T^n x\|=+\infty$ or $\sum_n \|S_n x\|=+\infty$. This was the case for backward shift operators. Another example is given by composition operators
induced by a parabolic automorphism of the disk.

Let $\phi$ be a parabolic automorphism of the disk, namely $\phi$ is an automorphism with a unique boundary fixed point. The composition operator $C_\phi$ 
defined by $C_\phi(f)=f\circ \phi$ is bounded on the Hardy space $H^2(\DD)$. Since, for any countable set $D\subset\TT$, the linear span of
$\bigcup_{\lambda\in\TT\backslash D}\ker(C_\phi-\lambda)$ is dense in $H^2(\DD)$ (see \cite{BAYGRIPLMS}), there exists a $C_\phi$-invariant strongly mixing Gaussian measure $\mu$ on $H^2(\DD)$ with full support. Moreover, it is shown in \cite[Example 3.9]{BAYGRIPLMS} that $C_\phi$ admits a $\TT$-eigenvector field $E$ which is 
$\alpha$-H\"older, for any $\alpha<1/2$. Devinck's result ensures that, for any $f\in\mathcal P$, $\cov(f\circ T^n,f)=O(n^{-\alpha})$, $\alpha<1/2$.

As for backward shifts, we can go further.
\begin{example}\label{EXCO}
Let $\phi$ be a parabolic automorphism of the disk, let $\alpha<1$ and let $\omega:\mathbb N\to(1,+\infty)$ going to infinity. There exists a $C_\phi$-invariant
strongly mixing Borel probability measure $\mu$ on $H^2(\DD)$ with full support such that $E_\omega\subset L ^2 (X,\mathcal B,\mu)$ and, for any $f,g\in E_\omega$, 
$$\cov(f\circ T^n,g)=O(n^{1-2\alpha}).$$
\end{example}
\begin{proof}
Without loss of generality, we may assume $\phi(1)=1$. It is easier to work with the upper half-plane $\mathbb P_+=\{s\in\CC;\ \Im m(s)>0\}$
which is biholomorphic to $\DD$ via the Cayley map $\sigma(z)=i(1+z)/(1-z)$. We set $\mathcal H^2:=\{f\circ\sigma^{-1};\ f\in H^2(\DD)\}$. The norm
on $\mathcal H^2$ is given by 
$$\|F\|_2^2=\pi^{-1}\int_{\mathbb R}|F(t)|^2\frac{dt}{1+t^2}.$$
Moreover, $C_\phi$, acting on $H^2(\DD)$, is similar, via the Cayley map, to a translation operator $\tau_a(F)=F(\cdot+a)$ acting on $\mathcal H^2$, $a\in\RR^*$.
We shall assume $a=1$. 

Let $p\geq 1$ and let $\mathcal D$ be the set of all holomorphic polynomials satisfying $P(1)=P'(1)=\dots=P^{(2p)}(1)=0$. $\mathcal D$ is dense in $H^2(\DD)$
and any $P\in\mathcal D$ satisfies $|P(z)|\leq C_P |z-1|^{2p}$. Hence $Q=P\circ\sigma^{-1}\in\mathcal H^2$ satisfies $|Q(x)|\leq\frac{C_Q}{(1+x^2)^p}$ for any $x\in\RR$. 

Let now $\omega_0:\NN\to(1,+\infty)$ and let $\theta=\sqrt{\omega_0}$. Let finally $(Q_n)_{n\geq 1}$ be a dense sequence in $\mathcal H^2$ such that, for any $n\geq 1$ and any $x\in\RR$, 
$$|Q(x)|\leq \frac{\theta(n)}{(1+x^2)^p}.$$

We claim that the assumptions of Theorem \ref{THMMAINTCL} are satisfied with $T=\tau_1$, $S_n=\tau_{-n}$ and $x_n=Q_n$. The first three points
can be found e.g. in \cite[Theorem 6.14]{BM09}. Let us prove (iv). Let $(n_k)\subset\NN^{\ZZ}$ and let us majorize
\begin{eqnarray*}
\left\|\sum_{k\geq 0}\tau_k Q_{n_k}\right\|_2^2&\leq&\int_{\mathbb R}\left(\sum_{k\geq 0}\frac{\theta(n_k)}{(1+(t+k)^2)^p}\right)^2\frac{dt}{1+t^2}\\
&\leq&2\sum_{k\geq j\geq 1}\int_{\mathbb R}\frac{\theta(n_k)\theta(n_j)}{(1+(t+k)^2)^p(1+(t+j)^2)^p(1+t^2)}dt.
\end{eqnarray*}
For $k,j\geq 1$, we say that $k\sim j$ provided $[-k-k^{1/4},-k+k^{1/4}]\cap [-j-j^{1/4},-j+j^{1/4}]\neq\varnothing$. It is important to notice that
\begin{eqnarray}\label{EQJKCOMPARABLE}
k\sim j\implies C^{-1}k\leq j\leq Ck.
\end{eqnarray}
Let us consider $k\geq j\geq 1$ and let us first assume that $k\nsim j$. We split the integral over $\mathbb R$ into three integrals: over $[-k-k^{1/4},-k+k^{1/4}]$, over
$[-j-j^{1/4},-j+j^{1/4}]$ and outside the two previous intervals. On $[-k-k^{1/4},-k+k^{1/4}]$, we know that $(t+j)^2\geq j^{1/2}$ (since $j\nsim k$), so that
$$\int_{-k-k^{1/4}}^{-k+k^{1/4}}\frac{dt}{(1+(t+k)^2)^p(1+(t+j)^2)^p(1+t^2)}\leq\frac{Ck^{1/4}}{1\times j^{p/2}\times k^2}\leq \frac{C}{k^{7/4}j^{7/4}}$$
provided $p\geq 7/2$. A similar estimation holds true on $[-j-j^{1/4},-j+j^{1/4}]$. On the remaining part of $\mathbb R$, we have both $(t+j)^2\geq j^{1/2}$ and
$(t+k)^2\geq k^{1/2}$. Since $\int_{\RR}dt/(1+t^2)<+\infty$, we finally get, provided $k\nsim j$, 
$$\int_{\mathbb R}\frac{dt}{(1+(t+k)^2)^p(1+(t+j)^2)^p(1+t^2)}\leq\frac C{k^{7/4}j^{7/4}}.$$
Suppose now $k\sim j$. We split the integral over $\mathbb R$ into two integrals: over $[-k-k^{1/4},-j+k^{1/4}]$ (recall that $k\geq j$) and outside this interval. 
Outside the interval, the estimation of the integral is very similar. Inside the interval, we cannot control both $(t+k)^2$ and $(t+j)^2$, whereas $(1+t^2)\geq Ck^2$. Observe also that the length of the interval
is controlled by $2k^{1/4}$. This leads to
$$\int_{\mathbb R}\frac{dt}{(1+(t+k)^2)^p(1+(t+j)^2)^p(1+t^2)}\leq\frac{C}{k^{7/4}}.$$
Summing over all possible values of $k$ and $j$ and using the previous estimates, we find 
\begin{eqnarray*}
\left\|\sum_{k\geq 1}\tau_k Q_{n_k}\right\|_2^2&\leq&C\sum_{k,j\geq 1}\frac{\theta(n_k)\theta(n_j)}{k^{7/4}j^{7/4}}+C\sum_{\substack{k\geq j\geq 1\\k\sim j}}\frac{\theta(n_k)\theta(n_j)}{k^{7/4}}\\
&\leq&C\left(\sum_{k\geq 1}\frac{\theta(n_k)}{k^{7/4}}\right)^2+C\sum_{\substack{k, j\geq 1\\k\sim j}}\frac{\theta(n_k)^2}{k^{7/4}}+C\sum_{\substack{k, j\geq 1\\k\sim j}}\frac{\theta(n_j)^2}{j^{7/4}}
\end{eqnarray*}
where the last inequality follows from (\ref{EQJKCOMPARABLE}). We then observe that, for a fixed $k\geq 1$, there is at most $Ck^{1/4}$
integers $j$ with $j\sim k$. Hence, 
$$\left\|\sum_{k\geq 1}\tau_k Q_{n_k}\right\|_2^2\leq C\left(\sum_{k\geq 1}\frac{\theta(n_k)}{k^{7/4}}\right)^2+C\sum_{k\geq 1}\frac{\theta(n_k)^2}{k^{3/2}}.$$
We take the square-root and observe that $\sqrt{a+b}\leq\sqrt a+\sqrt b$, $\sqrt c\leq c$ if $c\geq 1$. This gives
$$\left\|\sum_{k\geq 1}\tau_k Q_{n_k}\right\|_2\leq C\sum_{k\geq 1}\frac{\theta(n_k)^2}{k^{3/2}},$$
which proves (iv) with $\veps_k=k^{-3/2}$. We conclude the proof by showing that, for any $k,n\geq 1$, 
$$\|\tau_k Q_n\|_2\leq \frac{C\omega_0(n)}{k^\alpha}.$$
Indeed, 
$$\|\tau_k Q_n\|_2^2\leq \int_{\mathbb R}\frac{\theta(n)^2}{(1+(t+k)^2)^p}\frac{dt}{1+t^2}.$$
We argue as above. Let $\veps\in(0,1)$. We split the integral over $\mathbb R$ into the integral over $[-k-k^\veps,-k+k^\veps]$
and the integral over $\mathbb R\backslash [-k-k^\veps,-k+k^\veps]$. This yields
$$\|\tau_k Q_n\|_2^2\leq C\frac{\theta(n)^2}{k^{2-\veps}}+C\frac{\theta(n)^2}{k^{2\veps p}},$$
so that
$$\|\tau_k Q_n\|_2\leq C\omega_0(n)\left(\frac 1{k^{1-\veps/2}}+\frac 1{k^{\veps p}}\right)\leq \frac{C\omega_0(n)}{k^\alpha},$$
provided $1-\veps/2\geq \alpha$ and $\veps p\geq \alpha$.
\end{proof}

\begin{remark}
It is also possible to get a Central Limit Theorem for linear functionals in the context of parabolic composition operators.
See Section \ref{SECFURTHER}.
\end{remark}

\section{Central limit theorems - The proofs}\label{SECPROOF}

This section is devoted to the proof of Theorem \ref{THMMAINTCL}.

\subsection{How to prove a central limit theorem}

A central question in this paper is to find ways to prove central limit theorems for linear dynamical systems. This has been already studied in the general context of ergodic theory. Let $(\Omega,\mathcal A,\mu)$ be a probability space and let $T:\Omega\to\Omega$ be a bijective bimeasurable transformation preserving
the measure $\mu$. We also assume that $T$ is ergodic. Let $f\in L^2(\Omega)$, then $(f\circ T^i)_{i\in\mathbb Z}$ is a stationary process. 
We set $S_n(f)=\sum_{i=0}^{n-1}f\circ T^i$ and we say that $f$ satisfies the Central Limit Theorem (in short, CLT) if $\frac1{\sqrt n}S_n(f)$ converges in distribution
to a normal law.

To obtain sufficient conditions on a function $f$ so that the CLT holds, we shall use the \emph{martingale method} which was successfully used recently in various problems (see for instance \cite{LB99}, \cite{CLB05}, \cite{DiSi06}, \cite{Dup10}). This method goes back to Gordin in \cite{Gor69}. The basic idea is to try to approximate a given stationary sequence $f\circ T^n$
by a sequence which is a martingale difference sequence and to deduce the CLT for the given stationary sequence from the result for the martingale. 
An efficient sufficient condition was obtained by Maxwell and Woodroofe in \cite{MW00}. Let $(\mathcal F_i)_{i\in\mathbb Z}$ be a filtration with
$\mathcal F_i\subset T^{-1}\mathcal F_i=\mathcal F_{i+1}$. Let $\mathcal F_{\infty}$ be the smallest $\sigma$-algebra containing all the $\mathcal F_i$
and let $\mathcal F_{-\infty}=\bigcap_{i\in\mathbb Z}\mathcal F_i$. Maxwell and Woodroofe proved that, if $f\in L^2(\mathcal F_\infty)\ominus L^2(\mathcal F_{-\infty})$
is $\mathcal F_0$-measurable and 
$$\sum_{n=1}^{+\infty}\frac{\|\mathbb E(S_n(f)|\mathcal F_0)\|_2}{n^{3/2}}<+\infty,$$
then there exists a martingale difference sequence $(m\circ T^i)$ adapted to the filtration $(\mathcal F_i)$ such that $\|S_n(f-m\circ T^n)\|_2=o(\sqrt n)$. 
In particular, $f$ satisfies the CLT.

\smallskip

The monotone filtration $(\mathcal F_i)$ must be chosen in accordance with the transformation $T$; in a given concrete system the construction of this filtration
is difficult. In particular, the $\mathcal F_0$-measurability of $f$ (meaning that $(\mathcal F_i)$ is adapted to the sequence $(f\circ T^i)$) is a too restrictive
condition for the applications we have in mind. We will need a nonadapted version of the theorem of Maxwell and Woodroofe. This was done by Voln\'y in
\cite{Vol06}.
\begin{VOLNY}
Let $(\mathcal F_i)_{i\in\mathbb Z}$ be a filtration with $\mathcal F_i\subset T^{-1}(\mathcal F_i)=\mathcal F_{i+1}$. Let $f\in L^2(\mathcal F_\infty)\ominus L^2(\mathcal F_{-\infty})$ satisfying 
$$\sum_{n=1}^{+\infty}\frac{\|\mathbb E(S_n(f)|\mathcal F_0)\|_2}{n^{3/2}}<+\infty\textrm{ and }\sum_{n=1}^{+\infty}\frac{\|S_n(f)-\mathbb E(S_n(f)|\mathcal F_n)\|_2}{n^{3/2}}<+\infty.$$
Then $f$ satisfies the CLT.
\end{VOLNY}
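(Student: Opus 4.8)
The plan is to prove the theorem by the \emph{martingale approximation} (Gordin) method, upgrading the adapted Maxwell--Woodroofe result quoted above to the non-adapted setting. Write $U$ for the Koopman isometry $Ug=g\circ T$; since $T$ is bijective and bimeasurable and $\mathcal F_i\subset T^{-1}\mathcal F_i=\mathcal F_{i+1}$, the operator $U$ is unitary with $U L^2(\mathcal F_i)=L^2(\mathcal F_{i+1})$, and $S_n(f)=\sum_{i=0}^{n-1}U^i f$. The goal is to produce a single $h\in L^2(\mathcal F_0)$ which is a martingale difference (i.e. $\mathbb E(h\mid\mathcal F_{-1})=0$), such that the stationary sums $M_n=\sum_{i=0}^{n-1}U^i h$ satisfy $\|S_n(f)-M_n\|_2=o(\sqrt n)$. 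Granting this, the conclusion is immediate: $(U^i h)_i$ is a stationary ergodic martingale difference sequence, so the Billingsley--Ibragimov CLT gives $M_n/\sqrt n\to N(0,\|h\|_2^2)$ in distribution, while $\|(S_n(f)-M_n)/\sqrt n\|_2\to 0$ forces $(S_n(f)-M_n)/\sqrt n\to 0$ in probability; Slutsky's lemma then yields the CLT for $S_n(f)/\sqrt n$. The hypothesis $f\in L^2(\mathcal F_\infty)\ominus L^2(\mathcal F_{-\infty})$ is exactly what guarantees that no part of $f$ is lost to the remote past or future, so that the approximation can capture all of $S_n(f)$.

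To handle the non-adapted aspect I would split $f$ according to the filtration: write $f=f^++f^-$ with $f^+=\mathbb E(f\mid\mathcal F_0)$ adapted to $\mathcal F_0$ and $f^-=f-f^+$ satisfying $\mathbb E(f^-\mid\mathcal F_0)=0$. The two hypotheses are tailored to these two pieces. For the adapted summand $f^+$ I would first check that the first hypothesis, $\sum_n\|\mathbb E(S_n(f)\mid\mathcal F_0)\|_2/n^{3/2}<+\infty$, transfers to $\sum_n\|\mathbb E(S_n(f^+)\mid\mathcal F_0)\|_2/n^{3/2}<+\infty$, and then apply the quoted Maxwell--Woodroofe theorem directly to $f^+$, obtaining a martingale difference $h^+$ with $\|S_n(f^+)-\sum_{i=0}^{n-1}U^i h^+\|_2=o(\sqrt n)$.

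For the co-adapted summand $f^-$ the idea is to reverse time. Passing to $T^{-1}$ and the reversed filtration $\mathcal G_i=\mathcal F_{-i}$ turns the far-future behavior of $f$ under $T$ into $\mathcal G$-past behavior. The quantity $S_n(f)-\mathbb E(S_n(f)\mid\mathcal F_n)=\sum_{i=0}^{n-1}\bigl(U^i f-\mathbb E(U^i f\mid\mathcal F_n)\bigr)$, which measures the escape of $S_n(f)$ into the remote future, becomes after this relabeling precisely the Maxwell--Woodroofe sum for the reversed system; thus the second hypothesis is exactly the reverse-time Maxwell--Woodroofe condition. Applying the adapted result to the reversed process yields a reverse martingale difference which, read back in the original time direction, gives $h^-$ with $\|S_n(f^-)-\sum_{i=0}^{n-1}U^i h^-\|_2=o(\sqrt n)$. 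Setting $h=h^++h^-$ and adding the two approximations completes the construction.

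The crux, and the step I expect to be most delicate, is obtaining the $o(\sqrt n)$ approximation from the mere \emph{summability} of the weighted series rather than from a rate on each term. This rests on a Maxwell--Woodroofe / Peligrad--Utev maximal inequality bounding $\|S_n(f)\|_2$ (and the approximation error) by a constant times $\sqrt n\,\sum_{k=1}^{n}\|\mathbb E(S_k(f)\mid\mathcal F_0)\|_2/k^{3/2}$; one builds $h$ as the $L^2$-limit of regularized martingale differences extracted from the resolvent $\sum_{k\geq 0}(1-\varepsilon)^k U^k f$ and uses this inequality to control both the convergence as $\varepsilon\to 0$ and the error. A secondary but essential piece of bookkeeping is to verify that the second condition really matches the reverse-time condition under the index shift $\mathcal F_i\subset T^{-1}\mathcal F_i=\mathcal F_{i+1}$, and that the splitting $f=f^++f^-$ produces no cross terms spoiling the two summability estimates; this interaction of the past and future parts is where the non-adapted argument genuinely departs from the classical one.
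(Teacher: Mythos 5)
You should first note that the paper does not prove this statement at all: it is quoted verbatim as Theorem~A from Voln\'y's article \cite{Vol06} and used as a black box, so there is no in-paper proof to compare yours against. Judged on its own terms, your outline does follow the strategy of the actual published proof. The orthogonal splitting $f=f^++f^-$ with $f^+=\mathbb E(f\mid\mathcal F_0)$ is the right one, and the decoupling you hope for is genuine: using $U^i\,\mathbb E(\cdot\mid\mathcal F_0)=\mathbb E(U^i\,\cdot\mid\mathcal F_i)$ one checks that $\mathbb E(S_n(f^+)\mid\mathcal F_0)=\mathbb E(S_n(f)\mid\mathcal F_0)$ while $S_n(f^+)-\mathbb E(S_n(f^+)\mid\mathcal F_n)=0$, and symmetrically $\mathbb E(S_n(f^-)\mid\mathcal F_0)=0$ while $S_n(f^-)-\mathbb E(S_n(f^-)\mid\mathcal F_n)=S_n(f)-\mathbb E(S_n(f)\mid\mathcal F_n)$; so each hypothesis governs exactly one summand and the cross terms you worry about vanish identically.

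Two caveats. First, the time reversal is not quite as you wrote it: with $\mathcal G_i=\mathcal F_{-i}$ the family is \emph{decreasing} in $i$ (since $\mathcal F_{-i-1}\subset\mathcal F_{-i}$), so the reversed system is not literally an instance of the adapted Maxwell--Woodroofe framework. The correct formulation is that the target object for $f^-$ is a reverse martingale difference $h^-\in L^2(\mathcal F_1)\ominus L^2(\mathcal F_0)$; then $(h^-\circ T^i)_i$ is again a stationary martingale difference sequence, this time with respect to $(\mathcal F_{i+1})_i$, and $h^++h^-\circ T^{-1}$ is a single martingale difference adapted to $(\mathcal F_i)_i$ whose partial sums differ from $S_n(h^++h^-)$ by an $O(1)$ boundary term, so the Billingsley--Ibragimov CLT (which needs the standing ergodicity assumption) applies as you say. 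Second, the entire analytic content --- deducing the $o(\sqrt n)$ martingale approximation from mere summability of the weighted series, via the maximal inequality and the resolvent construction $h_\varepsilon=\sum_{k\geq 0}(1-\varepsilon)^k\mathbb E(U^kf\mid\mathcal F_0)$ and its limit as $\varepsilon\to 0$ --- is named but not carried out; this is the whole difficulty of Maxwell--Woodroofe's theorem and of Voln\'y's extension, so what you have is a correct roadmap rather than a proof. Since the paper itself treats Theorem~A as an external input, that level of detail is arguably appropriate, but the missing steps should be attributed to \cite{MW00} and \cite{Vol06} rather than claimed.
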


%

It should be observed that Theorem A do not exclude that $S_n(f)/\sqrt n$ converges to a degenerate normal law, namely to a Dirac mass.
If we put stronger assumptions on $f$ (like the convergence of $\sum n |\cov(f,f\circ T^n)|$, see \cite{Liv96}), then this happens iff $f$ is a coboundary,
namely $f=g-g\circ T$ for some $g\in L^2$.

\subsection{The measure on $X$}\label{SECCONSTRUCTIONMEASURE}
From now on, we fix $\omega:[1,+\infty)\to\mathbb (1,+\infty)$ going to infinity. Without loss of generality, we may assume
that $\omega$ is nondecreasing. We consider $\omega_1:[1,+\infty)\to\mathbb (1,+\infty)$ tending to infinity and nondecreasing such that
 $$
 \left\{
 \begin{array}{l}
 \omega_1(k)^2=_{k\to+\infty}o(\omega(k))\\
 \omega_1(k+1)\leq 2\omega_1(k).
 \end{array}\right.$$
 
We then fix another function $\omega_0:[1,+\infty)\to\mathbb (1,+\infty)$
going to $+\infty$, nondecreasing, and such that
\begin{eqnarray}\label{EQOMEGA0}
\forall k,k'\geq 1,\ \omega_0(k+k')^{k+k'}\leq \omega_1(k)^k\omega_1(k')^{k'}.
\end{eqnarray}
For instance, we can set 
$$\omega_0(k)=\sqrt{\omega_1\left(\frac k2\right)}.$$
We claim that $\omega_0$ satisfies (\ref{EQOMEGA0}). Indeed, since $\omega_1$ is nondecreasing,
$$\forall k,k'\geq 1,\ \omega_1(k)^k\omega_1(k')^{k'}\geq \omega_1\left(\frac{k+k'}2\right)^{\frac{k+k'}2}.$$

\bigskip

The construction of the measure $\mu$ which appears in Theorem \ref{THMMAINTCL} follows \cite{MuPe13}. However, we will need to be  more careful during the construction because we want additional properties.
For convenience, throughout this section, for $k\in\mathbb N$, we shall denote by $T^{-k}x$ the vector $S_k x$, $x\in\mathcal D$.

The idea of Murillo-Arcila and Peris is to conjugate $T$ to a Bernoulli shift acting on $\NN^\ZZ$ and to transfer the ergodic properties
of this shift to $T$. We start from the sequence $(x_n)$ satisfying the assumptions of Theorem \ref{THMMAINTCL}. We may assume that $x_1=0$
and that $S_nx_1=0$ for all $n\geq 0$.
Let $(N_n)$ be an increasing sequence of positive integers with $N_{n+2}-N_{n+1}>N_{n+1}-N_n$
and satisfying, for any $n\in\NN$,
$$\left\|\sum_{k>N_n}T^k x_{m_k}\right\|+\left\|\sum_{k<-N_n}T^k x_{m_k}\right\|\leq \frac1{2^{n}}\textrm{ if }m_k\leq l,\ \textrm{ for }N_l<|k|\leq N_{l+1},\ l\geq n.$$

We define $K=\prod_{k\in\ZZ}F_k$ where
$$F_k=\{1,\dots,m\}\textrm{ if }N_m<|k|\leq N_{m+1}\textrm{ and }F_k=\{1\}\textrm{ if }|k|\leq N_1.$$
Let $K(s):=\sigma^{-s}(K)$, $s\in\ZZ$, where $\sigma:\NN^\ZZ\to\NN^\ZZ$ is the forward shift. The intertwining map $\phi$
is defined on $Z=\bigcup_{s\in\ZZ}K(s)$ by 
$$\phi\big((n_k)\big)=\sum_{k\in\ZZ}T^{k}x_{n_k}.$$
$\phi$ is well defined and continuous, and it satisfies on $Z$  the intertwining relation
\begin{eqnarray}
T\circ\phi=\phi\circ\sigma.\label{EQINTERTWINING}
\end{eqnarray}

Let us now construct on $\NN^\ZZ$ a measure $\bar\mu$ which is invariant for $\sigma$ and such that $\bar\mu(Z)=1$.
We fix a sequence $(p_l)$ of positive real numbers satisfying $\sum_l p_l=1$ and such that, setting
$$\beta_l=\left(\sum_{j=1}^{l} p_j\right)^{(N_{l+1}-N_{l})}>0,$$
then $\prod_{l\geq 1}\beta_l^2>0$.
This condition is satisfied provided $(p_l)$ converges sufficiently fast to zero. Then define $\bar\mu_k$ on $\NN$
by $\bar\mu_k(\{n\})=p_n$ and $\bar\mu$ on $\NN^\ZZ$ as $\bar\mu=\bigotimes_{k\in\ZZ}\bar\mu_k$.
It is shown in \cite{MuPe13} that $\bar\mu$ is a $\sigma$-invariant strongly mixing Borel probability measure on $\NN^\ZZ$ 
satisfying $\bar\mu(\ZZ)=1$. In particular, $\phi$ is defined almost everywhere on $\NN^\ZZ$ and (\ref{EQINTERTWINING})
is a.e. true. 

\smallskip

These properties can be transfered to $X$ by setting $\mu(A)=\bar\mu\big(\phi^{-1}(A)\big)$, $A\in\mathcal B(X)$. 
$\mu$ is a $T$-invariant strongly mixing Borel probability measure on $X$  (see  \cite{MuPe13}).
We have just to prove that $\mu$ has full support (in \cite{MuPe13}, this was done under the stronger assumption that $\mathcal D$ is dense).
Let $U$ be a nonempty open subset of $X$. Let $F$ be a finite subset of $\ZZ$ and let $(n_k)\subset \mathbb N^F$ be such that
$$y=\sum_{k\in F}T^k x_{n_k}\in U.$$
Let $n\in\mathbb N$ be such that $N_n>\max(\max F,-\min F)$ and $y+B(0,2^{-n})\subset U$. For $k\in[-n,n]\backslash F$, we set $n_k=1$. 
Then $U$ contains 
$$\left\{\sum_{k=-{N_n}}^{N_n}T^k x_{n_k}+\sum_{|k|>N_n}T^k x_{m_k};\ m_k\leq l\textrm{ for }N_l<|k|\leq N_{l+1},\ l\geq n\right\}.$$
Hence,
\begin{eqnarray*}
\mu(U)&\geq&\prod_{k=-N_n}^{N_n}\overline{\mu_k}(\{n_k\})\prod_{l=n}^{+\infty}\left(\prod_{N_l<|k|\leq N_{l+1}}\overline{\mu_k}(\{1,\dots,l\})\right)\\
&\geq&\prod_{k=-N_n}^{N_n}\overline{\mu_k}(\{n_k\})\prod_{l=n}^{+\infty}\beta_l^2>0.
\end{eqnarray*}

\smallskip

To ensure stronger properties than mixing, we will need additional assumptions on the sequence $(p_l)$.
We summarize these technical assumptions now, without further comments:
\begin{eqnarray}\label{EQPN2}
\sum_{m>l} p_m=o(p_l),
\end{eqnarray}
\begin{eqnarray}\label{EQPN3}
\forall l\geq 1,\ \forall k\geq 1,\ \sum_{m\geq l}\sqrt{p_m}\omega_0(m)^k\leq C\sqrt{p_l}\max\big(\omega_0(k)^k,\omega_0(l)^k\big),
\end{eqnarray}
\begin{eqnarray}\label{EQPN4}
\forall l\geq 1,\ \forall k\geq 1,\ \sum_{m\geq l}p_m\omega_0(m)^k\leq Cp_l\max\big(\omega_0(k)^k,\omega_0(l)^k\big)
\end{eqnarray}
where $C$ is some absolute constant. These conditions are satisfied if we require that the sequence $(p_l)$ decreases sufficiently fast to 0. 
This is clear for (\ref{EQPN2}) and also for (\ref{EQPN3}) if we restrict ourselves to $k\leq l$. If we now assume
$k>l$, then we can ensure (\ref{EQPN3}) by requiring that, for any $m\geq 1$,
$$\sqrt{p_{m+1}}\left(\omega_0(m+1)\right)^{m+1}\leq \frac 12\sqrt{p_m}\left(\omega_0(m)\right)^m.$$
Indeed, for any $l\geq 1$ and any $k>l$, one can decompose the sum into
$$\sum_{m\geq l}\sqrt{p_m}\omega_0(m)^k\leq \sum_{m=l}^{k}\sqrt{p_m}\omega_0(m)^{k}+\sum_{m=k+1}^{+\infty}\sqrt{p_m}\omega_0(m)^m.
$$
The first sum is bounded by $\left(\sum_{m\geq l}\sqrt{p_m}\right)\omega_0(k)^k$. The second sum is estimated as follows:
\begin{eqnarray*}
\sum_{m> k}\sqrt{p_m}\omega_0(m)^m&\leq &\sqrt{p_k}\omega_0(k)^k \left(\frac12+\frac14+\dots\right)\\
&\leq&\sqrt{p_l}\omega_0(k)^k.
\end{eqnarray*}
That we may also ensure (\ref{EQPN4}) follows along the same lines.

\bigskip


\bigskip

We now show that our class of functions $E_\omega$ is contained in $L^2(X,\mathcal B,\mu)$.
\begin{lemma}\label{LEMXBMUISLARGE}
\begin{itemize}
\item[(a)] For any $d\geq 1$, $\|\cdot\|^d\in L^2(X,\mathcal B,\mu)$;
\item[(b)] $E_\omega\subset L^2(X,\mathcal B,\mu)$.
\end{itemize}
\end{lemma}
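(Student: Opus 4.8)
The plan is to reduce both statements to the single moment estimate
$$M_n:=\int_X\|x\|^n\,d\mu\le C_1^n\,\omega_0(n)^n\qquad(n\ge1),$$
for a suitable constant $C_1$. Since $\mu=\phi_*\bar\mu$, we have $M_n=\int_{\NN^\ZZ}\|\phi((n_k))\|^n\,d\bar\mu$, and for $\bar\mu$-almost every $(n_k)$ the series defining $\phi$ converges; condition (iv) of Theorem \ref{THMMAINTCL} together with the triangle inequality then gives $\|\phi((n_k))\|\le\sum_{k\in\ZZ}\omega_0(n_k)\veps_k$ (recall $T^{-k}x=S_kx$, and note that $\veps_k\ge0$ since the inequality holds for all $(n_k)$). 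Writing $u_k=\omega_0(n_k)\veps_k\ge0$, it suffices to bound $\EE\big[(\sum_k u_k)^n\big]$.

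To establish the moment estimate I would expand by the multinomial theorem; as every term is nonnegative, Tonelli's theorem justifies integrating term by term, and the independence of the coordinates $n_k$ under the product measure $\bar\mu=\bigotimes_k\bar\mu_k$ factorises each expectation:
$$\EE\Big[\big(\textstyle\sum_k u_k\big)^n\Big]=\sum_{\substack{(a_k)\\ \sum_k a_k=n}}\frac{n!}{\prod_k a_k!}\prod_{k}\veps_k^{a_k}\,A_{a_k},\qquad A_b:=\sum_m p_m\,\omega_0(m)^b,$$
with $A_0=1$. By (\ref{EQPN4}) applied with $l=1$ one gets $A_b\le C_0\,\omega_0(b)^b$ for every $b\ge1$, where $C_0=Cp_1$ (enlarged so that $C_0\ge1$). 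For a multi-index with $\sum_k a_k=n$ and $r$ nonzero entries, monotonicity of $\omega_0$ gives $\prod_{k:a_k\ge1}\omega_0(a_k)^{a_k}\le\omega_0(n)^n$ while $C_0^{\,r}\le C_0^{\,n}$; these bounds are uniform in the multi-index, so they factor out, and the remaining sum collapses by the multinomial theorem (valid since $\veps\in\ell^1(\ZZ)$) to $\sum_{(a_k)}\frac{n!}{\prod_k a_k!}\prod_k\veps_k^{a_k}=\big(\sum_k\veps_k\big)^n$. This yields $M_n\le(C_0\|\veps\|_1)^n\omega_0(n)^n$, i.e. the estimate with $C_1=C_0\|\veps\|_1$. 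I expect this to be the main obstacle: the delicate point is to organise the expansion so that the $\omega_0$-factors collapse to a single $\omega_0(n)^n$ while the weights $\veps_k$ recombine into $\|\veps\|_1^n$.

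Part (a) is then immediate, since $\int_X\|x\|^{2d}\,d\mu=M_{2d}\le C_1^{2d}\omega_0(2d)^{2d}<\infty$. For part (b), fix $f\in E_\omega$; from $\|D^\kappa f(0)\|\le\|f\|_\omega\,\omega(\kappa)^{-\kappa}$ we get the pointwise bound $|f(x)|\le\|f\|_\omega\sum_{\kappa\ge0}\frac{\|x\|^\kappa}{\kappa!\,\omega(\kappa)^\kappa}$. Minkowski's inequality in $L^2(\mu)$ together with $\big\|\,\|\cdot\|^\kappa\big\|_{L^2(\mu)}=M_{2\kappa}^{1/2}\le C_1^\kappa\omega_0(2\kappa)^\kappa$ reduces the claim $f\in L^2(\mu)$ to the convergence of $\sum_\kappa\frac{C_1^\kappa\omega_0(2\kappa)^\kappa}{\kappa!\,\omega(\kappa)^\kappa}$. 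Here (\ref{EQOMEGA0}) with $k=k'=\kappa$ gives precisely $\omega_0(2\kappa)\le\omega_1(\kappa)$, so the general term is at most $\frac{C_1^\kappa}{\kappa!}\big(\omega_1(\kappa)/\omega(\kappa)\big)^\kappa$; since $\omega_1(\kappa)^2=o(\omega(\kappa))$ forces $\omega_1(\kappa)/\omega(\kappa)\to0$, this ratio is eventually $\le 1/(2C_1)$ and the series converges. The role of introducing $\omega_1$ (with (\ref{EQOMEGA0}) and $\omega_1^2=o(\omega)$) is exactly to absorb the index doubling created by passing from $\|\cdot\|^\kappa$ to its $L^2$-norm, i.e. to the $2\kappa$-th moment.
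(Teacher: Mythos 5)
Your proof is correct and follows essentially the same route as the paper: both reduce everything to the moment bound $\int_X\|x\|^{2d}\,d\mu\le C^{2d}\omega_0(2d)^{2d}$ obtained from condition (iv) together with (\ref{EQPN4}), and then handle $E_\omega$ termwise through the Taylor expansion. The only (minor) divergence is technical rather than conceptual: you distribute the expectation over the expanded power via the multinomial theorem and the independence of the coordinates under $\bar\mu$, whereas the paper uses H\"older's inequality with exponent $2d$; and in part (b) your explicit treatment of the index doubling $\kappa\mapsto 2\kappa$, via $\omega_0(2\kappa)\le\omega_1(\kappa)$ from (\ref{EQOMEGA0}) and $\omega_1^2=o(\omega)$, is if anything slightly more careful than the paper's.
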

\begin{proof}
Let $d\geq 1$. The construction of the measure $\bar \mu$ ensures that, for almost every $(n_k)\subset\NN^{\ZZ}$, the series 
$\sum_{k\in\ZZ}T^k x_{n_k}$ is convergent. By Condition (iv), 
$$\|\phi\big((n_k)\big)\|\leq \sum_{k\in\ZZ}\veps_k\omega_0(n_k).$$
We expand the product and then use H\"older's inequality to get
\begin{eqnarray*}
\int_X \|x\|^{2d}d\mu(x)&\leq &\sum_{k_1,\dots,k_{2d}\in\mathbb Z}\int_{\mathbb N^{\mathbb Z}}\prod_{i=1}^{2d} \veps_{k_i}\omega_0(n_{k_i})d\bar \mu\big((n_k)\big)\\
&\leq&\sum_{k_1,\dots,k_{2d}\in\mathbb Z}\prod_{i=1}^{2d}\left(\int_{\mathbb N^{\mathbb Z}} \veps_{k_i}^{2d}\omega_0(n_{k_i})^{2d}d\bar \mu\big((n_k)\big)\right)^{1/2d}.
\end{eqnarray*}
Now,
$$\int_{\mathbb N^{\mathbb Z}} \veps_{k_i}^{2d}\omega_0(n_{k_i})^{2d}d\bar \mu\big((n_k)\big)\leq \veps_{k_i}^{2d}\sum_{l\geq 1} p_l\omega_0(l)^{2d}\leq C\veps_{k_i}^{2d}\omega_0(2d)^{2d},$$
where we have used (\ref{EQPN4}). Coming back to the $L^2$-norm
of $\|\cdot\|^d$, we finally obtain
\begin{eqnarray*}
\int_X \|x\|^{2d}d\mu(x)&\leq&C\omega_0(2d)^{2d}\sum_{k_1,\dots,k_{2d}\in\mathbb Z}\prod_{i=1}^{2d} \veps_{k_i}\\
&\leq&C\omega_0(2d)^{2d}\left(\sum_{k\in\ZZ}\veps_k\right)^{2d}\\
&\leq&C^{2d}\omega_0(2d)^{2d}.
\end{eqnarray*}
To prove (b), we start from a function $f\in E_\omega$ and we observe that
$$\|D^{\kappa}f(0)(x,\dots,x)\|\leq \|D^\kappa f(0)\|\times\|x\|^{\kappa}$$
so that, from the proof of the first point, we deduce
$$\|D^\kappa f(0)\|_{L^2}\leq C^\kappa\omega_0(\kappa)^\kappa  \|D^\kappa f(0)\|.$$
Since $\sup_{\kappa}\|D^\kappa f(0)\|\omega(\kappa)^\kappa<+\infty$ and $\omega_0(\kappa)=_{+\infty}o\big(\omega(\kappa)\big)$,  the series $\sum_\kappa \frac{D^\kappa f(0)(x,\dots,x)}{\kappa !}$ is convergent
in $L^2(X,\mathcal B,\mu)$, showing that $E_\omega\subset L^2(X,\mathcal B,\mu)$.
\end{proof}

\begin{remark}
If $f:X\to\mathbb R$ is infinitely differentiable at $0$, satisfies for any $x\in X$ 
$$f(x)=\sum_{\kappa=0}^{+\infty}\frac{D^\kappa f(0)}{\kappa !}(x,\dots,x),$$
and verifies moreover that, for any $R>0$, 
$$\sup_{\kappa>0}\|D^\kappa f(0)\|R^\kappa<+\infty,$$
then it is not hard to show that one can construct a function $\omega:\mathbb N\to(1,+\infty)$ going to $+\infty$
and such that $f\in E_\omega$. Under the assumptions of Theorem \ref{THMMAINTCL}, this means that one can define
a measure $\mu_f$ on $X$ such that the sequence $(f\circ T^n)$ satisfies the central limit theorem in 
$L^2(X,\mathcal B,\mu_f)$. However, to get a single measure $\mu$ which works for a large class of functions, 
we have to fix $\omega$ before.
\end{remark}

\subsection{An orthonormal basis of $L^2(X,\mathcal B,\mu)$}
In this section, we describe an orthonormal basis of $L^2(X,\mathcal B,\mu)$ or, equivalently,
an orthonormal basis of $L^2(\NN^\ZZ,\bar \mu)$. There is a usual way to do so. 
Suppose that $(e_l)_{l\geq 0}$ is an orthonormal basis of $\ell^2\big(\NN,(p_n)\big)$ with $e_0=1$.
Let $\NNinf=\bigcup_{k\geq 1}\NN^k$, $\ZZinf=\bigcup_{k\geq 1}\ZZ^k$ et
$\ZZinfinf=\{\mathbf j=(j_1,\dots,j_r)\in\ZZinf;\ j_1<j_2<\dots<j_r\}$.
For $\bf j\in\ZZinf$, denote by $|\bf j|$ the unique positive integer $r$ such that
$\mathbf j\in\ZZ^r$. 
Then, for $\mathbf l=(l_1,\dots,l_r)\in\NN^r$ and $\mathbf j=(j_1,\dots,j_r)\in\ZZ^r$
with $j_1<\dots<j_r$, define
$$e_{\mathbf l,\mathbf j}=e_{l_1,j_1}\times\dots\times e_{l_r,j_r},$$
where $e_{l,j}$ is a copy of $e_l$ on the $j$-th coordinate, namely 
$e_{l,j}\big((n_k)\big)=e_l(n_j)$. It is well known that 
$\{e_{\mathbf l,\mathbf j};\mathbf l\in\NNinf,\mathbf j\in\ZZinfinf,|\mathbf l|=|\mathbf j|\}\cup\{1\}$ is an orthonormal basis of 
$L^2(\NN^\ZZ,\bar \mu)$. Thus we just need to concentrate on the choice of $(e_l)$. 
We construct a triangular orthonormal basis.
\begin{lemma}\label{LEMORTHONORMALBASIS}
There exists an orthonormal basis $(e_l)$ of $\ell^2\big(\NN,(p_n)\big)$ with $e_0=1$ and, for 
$l\geq 1$,
$$e_l(u)=\left\{
\begin{array}{ll}
0&\textrm{ if }u<l\\
\dis \frac{1}{\sqrt{p_l}}\times\frac{\sqrt{\sum_{m>l}p_m}}{\sqrt{p_l+\sum_{m>l} p_l}}&\textrm{ if }u=l\\
\dis -\sqrt{p_l}\times \frac 1{\sqrt{p_l\left(\sum_{m>l} p_m\right)+\left(\sum_{m>l} p_m\right)^2}}&\textrm{ if }u>l.
\end{array}\right.$$
\end{lemma}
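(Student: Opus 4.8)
The plan is to construct the basis by Gram--Schmidt orthonormalization, but carried out on a cleverly chosen preliminary family so that the resulting vectors have the stated triangular (``staircase'') structure rather than the full Gram--Schmidt vectors, which would be supported everywhere. The natural starting family is the sequence of indicator-type functions adapted to the tail structure: for $l\geq 1$, I would consider the function $\mathbf 1_{\{u\geq l\}}$, together with $e_0=\mathbf 1$. These are linearly independent in $\ell^2(\NN,(p_n))$ (their span contains all finitely supported sequences once we take differences), so orthonormalizing $\{\mathbf 1_{\{u\geq 0\}},\mathbf 1_{\{u\geq 1\}},\dots\}$ will produce a basis. The key observation is that $e_l$ should be supported on $\{u\geq l\}$ and should be orthogonal to every $e_j$ with $j<l$, equivalently orthogonal to all $\mathbf 1_{\{u\geq j\}}$ for $j<l$; since on $\{u\geq l\}$ all these indicators are identically $1$, this reduces to a single orthogonality condition, namely $\int e_l\, d\bar\mu_k=0$ restricted to the tail, i.e. $\sum_{u\geq l} e_l(u)p_u=0$.

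With this reduction the proof becomes a short explicit computation rather than an induction. First I would posit that $e_l$ is constant on $\{u>l\}$ and takes a separate value at $u=l$, writing $e_l(l)=a$ and $e_l(u)=b$ for $u>l$; this ansatz is forced by the requirement that $e_l$ be a normalized vector orthogonal to the one-dimensional ``difference'' directions already used, and it is exactly the form claimed in the statement. The two unknowns $a,b$ are then pinned down by two scalar equations: the orthogonality/mean-zero condition $a p_l + b\sum_{m>l}p_m=0$, and the normalization condition $a^2 p_l + b^2\sum_{m>l}p_m=1$. Solving the first for $b=-a p_l/\sum_{m>l}p_m$ and substituting into the second yields $a^2 p_l\bigl(1+p_l/\sum_{m>l}p_m\bigr)=1$, whence $a=\frac{1}{\sqrt{p_l}}\cdot\frac{\sqrt{\sum_{m>l}p_m}}{\sqrt{p_l+\sum_{m>l}p_m}}$ and the stated value of $b$ follows immediately; a direct check then confirms these match the displayed formulas (here one uses $\sum_{m>l}p_m>0$, which holds since all $p_m>0$).

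It remains to verify that the family $\{e_l\}_{l\geq 0}$ so obtained is genuinely an orthonormal basis, not merely an orthonormal system. Orthonormality is built in by construction, so the only point to check is completeness. For this I would argue that the closed span of the $e_l$ contains every $\mathbf 1_{\{u\geq l\}}$, hence every finitely supported sequence, which is dense in $\ell^2(\NN,(p_n))$. Concretely, $e_0=\mathbf 1=\mathbf 1_{\{u\geq 0\}}$, and for each $l\geq 1$ the vector $\mathbf 1_{\{u\geq l\}}-\mathbf 1_{\{u\geq l+1\}}=\mathbf 1_{\{u=l\}}$ lies in the span of $\{e_0,\dots,e_l\}$ by the triangular structure, so by induction all standard basis vectors $\delta_l$ belong to $\overline{\operatorname{span}}(e_j)$. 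Thus the system is complete.

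The main obstacle is not any single computation but getting the bookkeeping of the two-value ansatz right: one must be careful that the triangular shape (support in $\{u\geq l\}$, single distinguished value at $u=l$) is the \emph{only} shape compatible with both the support requirement and orthogonality to all earlier $e_j$, since it is this uniqueness that justifies the closed-form answer instead of a recursive Gram--Schmidt expression. Once the reduction to the two scalar equations is in place, the rest is routine algebra, and the positivity $p_l,\sum_{m>l}p_m>0$ guarantees all the square roots and denominators are well defined.
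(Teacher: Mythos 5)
Your proposal is correct and is essentially the paper's own proof: the paper simply states that $(e_l)$ is obtained by orthonormalizing the family $(1,1,\dots),(0,1,1,\dots),(0,0,1,1,\dots),\dots$, i.e.\ the tail indicators $\mathbf 1_{\{u\geq l\}}$, and you have supplied the two-value computation and completeness check that the paper leaves to the reader. Your derivation also confirms that the formula for $e_l(l)$ in the statement contains a typo: the denominator should read $\sqrt{p_l+\sum_{m>l}p_m}$ rather than $\sqrt{p_l+\sum_{m>l}p_l}$.
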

\begin{proof}
Given these formula, we just need to verify that this is an orthonormal basis. 
We observe that $(e_l)_{l\geq 0}$ is obtained by orthonormalization of the basis
$(1,1,\dots)$, $(0,1,1,\dots)$, $( 0,0,1,1,\dots),\dots$.
\end{proof}
Of course, the exact values of $e_l(u)$ are not very appealing. In the sequel, we will just need the following
estimations, which are satisfied thanks to (\ref{EQPN2}):
$$\left\{
\begin{array}{rcl}
| e_l(l)|&\leq&\displaystyle \frac C{\sqrt{p_l}}\\[0.4cm]
| e_l(u)|&\leq&\displaystyle \frac C{\sqrt{p_{l+1}}}\quad\textrm{ if }u>l.
\end{array}\right.
$$
Let us point out that this orthonormal basis behaves very well with respect to $\sigma$. Indeed, it is easy to check that 
\begin{eqnarray}
e_{\mathbf l,\mathbf j}\circ\sigma=e_{\mathbf l,\mathbf j-1}.\label{EQBASISSIGMA}
\end{eqnarray}
Let us also mention the following property which is the key for our forthcoming estimations. The $\ell^2\big(\mathbb N,(p_n)\big)$-norm
of each $e_l$ is equal to $1$. However, the $\ell^1\big(\mathbb N,(p_n)\big)$-norm of $e_l$ goes very quickly to zero: it behaves 
like $\sqrt{p_l}$.

\subsection{The Fourier coefficients}
In this subsection, we control the Fourier coefficients of a homogeneous polynomial. This will be the key point to
control later the behaviour of the sequence of the covariances $\cov(f\circ T^n,g)$ for $f,g\in E_\omega$.
We first observe that many Fourier coefficients of a homogeneous polynomial are equal to zero. 
\begin{lemma}\label{LEMCOVARIANCE2}
Let $P(x)=Q(x,\dots,x)$ be a homogeneous polynomial of degree $d$. 
Let $\mathbf j\in\ZZinfinf$, $\mathbf l\in\NNinf$ with $|\mathbf j|=|\mathbf l|>d$. Then
$$\langle e_{\mathbf l,\mathbf j},P\circ\phi \rangle_{L^2(\NN^\ZZ,\bar \mu)}=0.$$
\end{lemma}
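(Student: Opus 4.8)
The claim to prove is Lemma \ref{LEMCOVARIANCE2}: if $P(x)=Q(x,\dots,x)$ is homogeneous of degree $d$, and $\mathbf j\in\ZZinfinf$, $\mathbf l\in\NNinf$ with $|\mathbf j|=|\mathbf l|=r>d$, then $\langle e_{\mathbf l,\mathbf j}, P\circ\phi\rangle_{L^2(\NN^\ZZ,\bar\mu)}=0$.

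Let me think about what $P\circ\phi$ looks like. We have $\phi((n_k))=\sum_{k\in\ZZ}T^k x_{n_k}$, so $(P\circ\phi)((n_k)) = Q\bigl(\sum_k T^k x_{n_k},\dots,\sum_k T^k x_{n_k}\bigr)$. By $d$-linearity of $Q$, this expands as
$$\sum_{k_1,\dots,k_d\in\ZZ} Q(T^{k_1}x_{n_{k_1}},\dots,T^{k_d}x_{n_{k_d}}).$$
So $P\circ\phi$ is a (convergent) sum of terms, each of which depends on at most $d$ of the coordinates $n_k$ — namely the coordinates indexed by the set $\{k_1,\dots,k_d\}$, which has cardinality at most $d$.

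On the other hand, $e_{\mathbf l,\mathbf j} = e_{l_1,j_1}\times\cdots\times e_{l_r,j_r}$ genuinely depends on all $r$ coordinates $j_1<\dots<j_r$, because each $l_i\geq 1$ (since $\mathbf l\in\NN^r$, and the indices with $l_i=0$ would give the constant factor $e_0=1$; the basis element $e_{\mathbf l,\mathbf j}$ with $\mathbf l\in\NNinf$ uses only nonconstant factors). The key algebraic fact is the orthogonality relation $\int_\NN e_l \, d\bar\mu_k = \langle e_l, e_0\rangle = 0$ for $l\geq 1$, since $e_0=1$ and the $(e_l)$ are orthonormal.

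**Carrying out the argument.**

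The plan is to compute the inner product term by term against the expansion of $P\circ\phi$. Since $\bar\mu=\bigotimes_k\bar\mu_k$ is a product measure and $e_{\mathbf l,\mathbf j}$ factorizes over coordinates, I would fix a single term $Q(T^{k_1}x_{n_{k_1}},\dots,T^{k_d}x_{n_{k_d}})$, which depends only on coordinates $n_k$ for $k\in S:=\{k_1,\dots,k_d\}$, and integrate. Because $r>d=|S|$, the increasing tuple $\mathbf j=(j_1,\dots,j_r)$ must contain at least one index $j_{i_0}\notin S$. The integral then factors, and the factor corresponding to coordinate $j_{i_0}$ is
$$\int_\NN e_{l_{i_0}}(n_{j_{i_0}})\,d\bar\mu_{j_{i_0}}(n_{j_{i_0}}) = \langle e_{l_{i_0}},1\rangle = 0,$$
since $l_{i_0}\geq 1$. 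Hence every term of the expansion contributes zero, giving $\langle e_{\mathbf l,\mathbf j},P\circ\phi\rangle=0$.

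**The main obstacle.**

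The one point requiring care is justifying that one may interchange the integral $\int_{\NN^\ZZ}\cdot\,d\bar\mu$ with the infinite $d$-linear expansion $\sum_{k_1,\dots,k_d}Q(\dots)$, i.e.\ the term-by-term integration. The hard part will therefore be an absolute-convergence/Fubini argument: I would bound $\sum_{k_1,\dots,k_d}\int |Q(T^{k_1}x_{n_{k_1}},\dots,T^{k_d}x_{n_{k_d}})|\,d\bar\mu$ using $|Q(y_1,\dots,y_d)|\leq\|Q\|\prod_i\|y_i\|$, the estimate $\|T^k x_n\|\leq\veps_{|k|}\omega_0(n)$ from hypothesis (iv) (or its variants), and the moment bound from the proof of Lemma \ref{LEMXBMUISLARGE}(a) controlling $\int\omega_0(n_k)^{d}\,d\bar\mu$ via \eqref{EQPN4}. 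Once dominated convergence secures the interchange, the vanishing of each term closes the proof. I would present the factorization and the orthogonality step as the conceptual core, and relegate the summability bound to a short verification paralleling Lemma \ref{LEMXBMUISLARGE}.
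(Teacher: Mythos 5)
Your conceptual core --- each term of the expansion of $P\circ\phi$ depends on at most $d$ coordinates, so for $r>d$ some factor $e_{l_{i_0},j_{i_0}}$ integrates against a quantity independent of $n_{j_{i_0}}$ and the term dies via $\int_\NN e_l\,d\bar\mu_j=0$ --- is exactly the paper's idea. But the decomposition you choose creates a gap you cannot close with the stated hypotheses. You expand $P\circ\phi$ as the full infinite sum $\sum_{k_1,\dots,k_d\in\ZZ}Q(T^{k_1}x_{n_{k_1}},\dots,T^{k_d}x_{n_{k_d}})$ and propose to justify term-by-term integration by absolute convergence, invoking ``the estimate $\|T^kx_n\|\le\veps_{|k|}\omega_0(n)$ from hypothesis (iv)''. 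Hypothesis (iv) gives no such termwise bound: it controls only the norms of the \emph{sums} $\sum_{k\ge0}T^kx_{n_k}$ and $\sum_{k<0}S_{-k}x_{n_k}$, and it is deliberately formulated that way to cover operators for which $\sum_k\|T^kx_{n_k}\|=+\infty$ (see the remark opening the subsection on parabolic composition operators, and backward shifts on $\ell^p$, $p>1$). For those examples your dominating series $\|Q\|\sum_{k_1,\dots,k_d}\prod_i\|T^{k_i}x_{n_{k_i}}\|$ genuinely diverges, so the Fubini step you relegate to ``a short verification'' is in fact false as stated.

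The paper sidesteps this with a different, finite decomposition: write $\phi=\phi_{\mathbf j}+\sum_{i=1}^rT^{j_i}x_{n_{j_i}}$, where $\phi_{\mathbf j}$ lumps together all coordinates outside $\{j_1,\dots,j_r\}$ into a single vector, and expand $Q$ multinomially into the finitely many terms $Q_{u_0,\dots,u_r}$ with $u_0+\dots+u_r=d$. No infinite interchange is needed (only integrability of each $Q_{u_0,\dots,u_r}$, which does follow from (iv) as in Lemma~\ref{LEMXBMUISLARGE}), and the pigeonhole $r>d$ forces some $u_k=0$ with $1\le k\le r$; that term is then independent of $n_{j_k}$ and vanishes after integrating out $e_{l_k,j_k}$. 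Your route could alternatively be repaired by working with partial sums over finite rectangles $F^d$, which converge pointwise by unconditional convergence and are dominated using (iv) applied to sequences truncated by $x_1=0$; but that is a different justification from the one you propose, and you should not present the interchange as following from a termwise decay that the hypotheses do not provide.
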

\begin{proof}
Let us introduce some notations which will also be useful for the next lemma. 
We write $\mathbf j=\{j_1,\dots,j_r\}$, $\mathbf l=\{l_1,\dots,l_r\}$ and we set
\begin{eqnarray*}
\phi_{\mathbf j}\big((n_k)\big)&=&\phi\big((n_k)\big)-T^{j_1}(x_{n_{j_1}})-\dots-T^{j_r}(x_{n_{j_r}})\\
\widehat{\mathbb N_{\mathbf j}}&=&\mathbb N\backslash\{j_1,\dots,j_r\}\\
\bar{\nu}_{\mathbf j}&\textrm{is}&\textrm{the projection of $\bar \mu$ onto $\widehat{\mathbb N_{\mathbf j}}$}.
\end{eqnarray*}
Moreover, for $u_0,\dots,u_r$ nonnegative integers with $u_0+\dots+u_r=d$, we set
\begin{eqnarray*}
Q_{u_0,\dots,u_r}\big((n_k)\big)&=&Q\big(\underbrace{\phi_{\mathbf j}\big((n_k)\big),\dots,\phi_{\mathbf j}\big((n_k)\big)}_{u_0\textrm{ times}},\underbrace{T^{j_1}x_{n_{j_1}},\dots,T^{j_1}x_{n_{j_1}}}_{u_1\textrm{ times}},\dots,\\
&&\quad\quad\quad\underbrace{T^{j_r}(x_{n_{j_r}}),\dots,T^{j_r}(x_{n_{j_r}})}_{u_r\textrm{ times}}\big).
\end{eqnarray*}
The $d-$linearity of $Q$ yields
\begin{eqnarray*}\displaystyle\langle e_{\mathbf l,\mathbf j},P\circ\phi\rangle=\sum_{\substack{u_0+\dots+u_r=d\\u_i\geq 0}}
\binom{d}{u_0,\dots,u_r}\int_{\NN^\ZZ}e_{\mathbf l,\mathbf j}\big((n_k)\big)Q_{u_0,\dots,u_r}\big((n_k)\big)\dmunk.
\end{eqnarray*}
Since $r>d$, in each term of the sum, one of the $u_1,\dots,u_r$, say $u_k$, is equal to zero. We use Fubini's theorem and we integrate first with respect to the $j_k$-th coordinate. We get zero since
$$\int_{\NN}e_{l,j}(n)d\bar\mu_j(n)=0,\textrm{ for any }l\in\NN\textrm{ and any }j\in\ZZ.$$
\end{proof}
The nonzero coefficients will be estimated thanks to the following lemma.
\begin{lemma}\label{LEMCOVARIANCE3}
Let $P(x)=Q(x,\dots,x)$ be a homogeneous polynomial of degree $d$.
Let $\mathbf j\in\ZZinfinf$, $\mathbf l\in\NNinf$ with $|\mathbf j|=|\mathbf l|=r\leq d$. Then
$$\left|\langle e_{\mathbf l,\mathbf j},P\circ\phi\rangle_{L^2(\NN^\ZZ,\bar \mu)}\right|\leq 
\frac{C^d r^d\|Q\|\omega_0(d)^{d}\sup_i \omega_0(l_i)^d}{(1+|j_1|)^{\alpha}\dots (1+|j_r|)^\alpha}\sqrt{p_{l_1}\dots p_{l_r}}.$$
\end{lemma}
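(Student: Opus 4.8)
The plan is to start from the multinomial expansion of $\langle e_{\mathbf l,\mathbf j},P\circ\phi\rangle$ already written down in the proof of Lemma \ref{LEMCOVARIANCE2}, reusing the notations $\phi_{\mathbf j}$, $\widehat{\NN_{\mathbf j}}$, $\bar\nu_{\mathbf j}$ and $Q_{u_0,\dots,u_r}$ introduced there. The first step is to discard the vanishing terms exactly as in that lemma: whenever one of $u_1,\dots,u_r$, say $u_i$, is zero, the integrand $Q_{u_0,\dots,u_r}$ does not depend on the $j_i$-th coordinate while $e_{\mathbf l,\mathbf j}$ carries the factor $e_{l_i}(n_{j_i})$ with $l_i\ge 1$, so integrating first in that coordinate gives $0$ because $\int_\NN e_{l_i}\,d\bar\mu_{j_i}=0$. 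Since $r\le d$, the surviving terms are exactly those with $u_1,\dots,u_r\ge 1$ (hence $u_0=d-\sum_{i\ge1}u_i$), and their multinomial weights $\binom{d}{u_0,\dots,u_r}$ sum to at most $(r+1)^d\le C^d r^d$.

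The second step is, for a fixed surviving multi-index, to bound the single integral. I estimate $|Q_{u_0,\dots,u_r}|\le \|Q\|\,\|\phi_{\mathbf j}\|^{u_0}\prod_{i}\|T^{j_i}x_{n_{j_i}}\|^{u_i}$ and replace $e_{\mathbf l,\mathbf j}$ by $\prod_i|e_{l_i}(n_{j_i})|$. The whole reason $\phi_{\mathbf j}$ was introduced is that $\|\phi_{\mathbf j}\|$ depends only on the coordinates outside $\mathbf j$, whereas each factor $|e_{l_i}(n_{j_i})|\,\|T^{j_i}x_{n_{j_i}}\|^{u_i}$ depends only on the single coordinate $j_i$. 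As $\bar\mu$ is a product measure, Tonelli factorizes the integral into $\big(\int_{\widehat{\NN_{\mathbf j}}}\|\phi_{\mathbf j}\|^{u_0}\,d\bar\nu_{\mathbf j}\big)\prod_{i=1}^r\big(\int_\NN |e_{l_i}(n)|\,\|T^{j_i}x_n\|^{u_i}\,d\bar\mu_{j_i}(n)\big)$.

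The third step estimates the two kinds of factors. For the $\phi$-moment, the very computation of Lemma \ref{LEMXBMUISLARGE} (H\"older plus (\ref{EQPN4}) with $l=1$) gives $\int\|\phi_{\mathbf j}\|^{u_0}\,d\bar\nu_{\mathbf j}\le C^{u_0}\omega_0(u_0)^{u_0}\le C^d\omega_0(d)^{u_0}$. For a coordinate factor I insert $\|T^{j_i}x_n\|\le \omega_0(n)/(1+|j_i|)^\alpha$, split $\sum_{n\ge l_i}$ into the term $n=l_i$ and the tail $n>l_i$, and use the basis estimates $|e_{l_i}(l_i)|\le C/\sqrt{p_{l_i}}$ and $|e_{l_i}(n)|\le C/\sqrt{p_{l_i+1}}$ together with (\ref{EQPN4}). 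After using $p_{l_i+1}\le p_{l_i}$ and the doubling $\omega_0(l_i+1)\le C\omega_0(l_i)$ valid for the $\omega_0$ fixed in Section \ref{SECCONSTRUCTIONMEASURE}, each coordinate factor is at most $C\sqrt{p_{l_i}}\,\max\!\big(\omega_0(u_i)^{u_i},\omega_0(l_i)^{u_i}\big)/(1+|j_i|)^{\alpha u_i}$.

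The final step multiplies everything out and recombines the exponents. Using $u_i\ge1$ I bound $(1+|j_i|)^{\alpha u_i}$ below by $(1+|j_i|)^{\alpha}$, producing the announced denominator, while the factors $\sqrt{p_{l_i}}$ give $\sqrt{p_{l_1}\cdots p_{l_r}}$. Bounding $\max$ by the product and using $\omega_0(u_i)\le\omega_0(d)$, $\omega_0(l_i)\le\sup_i\omega_0(l_i)$, the crucial identity $u_0+u_1+\dots+u_r=d$ makes the powers of $\omega_0(d)$ from the $\phi$-moment and from the $\omega_0(u_i)^{u_i}$ collapse to $\omega_0(d)^{d}$, and the powers of $\sup_i\omega_0(l_i)$ collapse to $\sup_i\omega_0(l_i)^{d}$; summing over surviving terms contributes $C^dr^d$, giving exactly the claim. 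I expect the main difficulty to be this exponent bookkeeping — preventing the powers of $\omega_0(d)$ from doubling — together with the more technical passage from $l_i+1$ back to $l_i$ in the tail, which is precisely where the fast decay of $(p_l)$ via (\ref{EQPN2})/(\ref{EQPN4}) and the regularity of $\omega_0$ are genuinely used.
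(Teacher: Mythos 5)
Your proposal is correct and follows essentially the same route as the paper: multilinear expansion with the $\phi_{\mathbf j}$ splitting, elimination of the terms with some $u_i=0$, factorization of the product integral, the Lemma \ref{LEMXBMUISLARGE} computation for the $\phi$-moment, the triangular basis estimates together with (\ref{EQPN4}) for each coordinate factor, and the same exponent bookkeeping using $u_0+\dots+u_r=d$. The only cosmetic difference is that you bound the sum of surviving multinomial weights directly by $(r+1)^d$, whereas the paper counts at most $2^d$ surviving terms each with coefficient at most $(r+1)^d$; both give the $C^dr^d$ factor.
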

\begin{proof}
We keep the same notations so that 
$$\begin{array}{rl}
\displaystyle \langle e_{\mathbf l,\mathbf j},P\circ\phi\rangle=\sum_{\substack{u_0+\dots+u_r=d\\u_i\geq 0}}\binom{d}{u_0,\dots,u_r}&\displaystyle\int_{\NN^\ZZ}e_{\mathbf l,\mathbf j}\big((n_k)\big)Q_{u_0,\dots,u_r}\big((n_k)\big)\\
& \displaystyle\quad\quad d\bar\nu_{\mathbf j}\big((n_k)\big)d\bar\mu_{j_1}(n_{j_1})\dots d\bar\mu_{j_r}(n_{j_r}).
\end{array}$$
As in the previous lemma we have just to consider the terms in the sum such that $u_1\geq 1,\dots,u_r\geq 1$. Moreover,
$$\big|Q_{u_0,\dots,u_r}\big((n_k)\big)\big|\leq \|Q\|\times\|\phi_{\mathbf j}\big((n_k)\big)\|^{u_0}\prod_{i=1}^r \|T^{j_i}(x_{n_{j_i}})\|^{u_i}.$$
We get
\begin{eqnarray*}
\left|\langle e_{\mathbf l,\mathbf j},P\circ\phi\rangle\right|&\leq&\|Q\|\sum_{\substack{u_0+\dots+u_r=d\\u_i\geq 1}}\binom{d}{u_0,\dots,u_r}
\int_{\widehat{\mathbb N_{\mathbf j}}}\|\phi_{\mathbf j}\big((n_k)\big)\|^{u_0}d\bar{\nu_{\mathbf j}}\big((n_k)\big)\times\\
&&\quad\quad \prod_{i=1}^r
\int_{\mathbb N}|e_l(n)|\|T^{j_i}(x_n)\|^{u_i}d\bar{\mu_0}(n).
\end{eqnarray*}
The first integral may be  handled exactly like in Lemma \ref{LEMXBMUISLARGE} and we get
$$\int_{\widehat{\mathbb N_{\mathbf j}}}\|\phi_{\mathbf j}\big((n_k)\big)\|^{u_0}d\bar\nu_{\mathbf j}\big((n_k)\big)\leq C^{u_0}\omega_0(u_0)^{u_0}.$$
The estimation of the other integrals needs the properties of the sequences $(p_l)$ and $(e_l)$. Indeed, for $u\geq 1$, $j\in\mathbb Z$ 
and $l\in\mathbb N$, one can write 
\begin{eqnarray*}
\int_{\NN} |e_l(n)|\|T^{j}(x_n)\|^u d\bar\mu_0(n)&=&\sum_{m\geq l}p_m\|T^{j}(x_m)\|^u |e_l(m)|\\
&\leq&\frac{C\sqrt{p_l}}{(1+|j|)^{\alpha u}}\omega_0(l)^u+\frac C{\sqrt{p_{l+1}}(1+|j|)^{\alpha u}}\sum_{m\geq l+1}p_m\omega_0(m)^u.
\end{eqnarray*}
We then apply (\ref{EQPN4}) to get
\begin{eqnarray*}
\int_{\NN} |e_l(n)|\|T^{j}(x_n)\|^u d\bar\mu_0(n)
&\leq&\frac{C\sqrt{p_l}}{(1+|j|)^{\alpha u}}\omega_0(l)^u+\\
&&\quad\quad\frac C{\sqrt{p_{l+1}}(1+|j|)^{\alpha u}}p_{l+1}\max\left(\omega_0(l+1)^u,\omega_0(u)^u\right)\\
&\leq&\frac{C^u\sqrt{p_l}\max\big(\omega_0(l)^u,\omega_0(u)^u\big)}{(1+|j|)^\alpha}\\
&\leq&\frac{C^u\omega_0(u)^u\sqrt{p_l}\omega_0(l)^u}{(1+|j|)^\alpha}.
\end{eqnarray*}
Coming back to our original Fourier coefficient and using $u_0+\dots+u_r=d$, we get 
$$
\left|\langle e_{\mathbf l,\mathbf j},P\circ\phi\rangle\right|\leq C^d \|Q\|\omega_0(d)^d\sum_{\substack{u_0+\dots+u_r=d\\u_i\geq 1}}\binom{d}{u_0,\dots,u_r}
\frac{\omega_0(l_1)^{u_1}\dots\omega_0(l_r)^{u_r}}{(1+|j_1|)^\alpha\dots (1+|j_r|)^\alpha}\sqrt{p_{l_1}\dots p_{l_r}}.$$
We conclude by noting that the cardinal number of $\{(u_0,\dots,u_r)\in\mathbb N^{r+1};\ u_0+\dots+u_r=d,\ u_1,\dots,u_r\geq 1\}$ is less than or
equal to $2^d$ and that the multinomial coefficient is less than or equal to $(r+1)^d$.
\end{proof}
We give a first application of the previous lemmas. It deals with the sum of the Fourier coefficients of a given order for a function 
in $E_\omega$.
\begin{lemma}\label{LEMSUMFOURIER}
Let $f\in E_\omega$, $F=f\circ\phi=\sum_{\mathbf l,\mathbf j}a_{\mathbf l,\mathbf j}e_{\mathbf l,\mathbf j}$. Then, for any $M>1$,
there exists $C_{\omega_0,\omega,M}>0$ such that, for any
$r\geq 1$ and any $\mathbf j\in\mathbb \ZZinfinf$ with $|\mathbf j|=r$,
$$\sum_{|\mathbf l|=r}|a_{\mathbf l,\mathbf j}|\leq \frac{C_{\omega_0,\omega,M}\|f\|_\omega}{M^r (1+|j_1|)^\alpha\dots (1+|j_r|)^\alpha}.$$
\end{lemma}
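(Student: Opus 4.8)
The plan is to reduce the estimate to the two preceding lemmas by splitting $f$ into homogeneous parts, and then to sum the resulting bounds carefully over both the multi-indices $\mathbf l$ and the degrees $\kappa$. First I would write $f=\sum_{\kappa\geq 0}P_\kappa$, where $P_\kappa(x)=\frac{D^\kappa f(0)}{\kappa!}(x,\dots,x)$ is homogeneous of degree $\kappa$ with associated symmetric form $Q_\kappa=\frac1{\kappa!}D^\kappa f(0)$, so that $\|Q_\kappa\|\leq \|f\|_\omega/(\kappa!\,\omega(\kappa)^\kappa)$ by the very definition of $\|f\|_\omega$. Since $F=f\circ\phi=\sum_\kappa P_\kappa\circ\phi$ converges in $L^2(\NN^\ZZ,\bar\mu)$ (this is exactly the convergence obtained in the proof of Lemma \ref{LEMXBMUISLARGE}), one has $a_{\mathbf l,\mathbf j}=\sum_\kappa\langle e_{\mathbf l,\mathbf j},P_\kappa\circ\phi\rangle$, and by Tonelli I may bound $\sum_{|\mathbf l|=r}|a_{\mathbf l,\mathbf j}|$ by $\sum_\kappa\sum_{|\mathbf l|=r}|\langle e_{\mathbf l,\mathbf j},P_\kappa\circ\phi\rangle|$. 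Lemma \ref{LEMCOVARIANCE2} kills every term with $\kappa<r$, so only the degrees $\kappa\geq r$ survive.

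For $\kappa\geq r$ I would apply Lemma \ref{LEMCOVARIANCE3} with $d=\kappa$ and sum over the $r$-tuples $\mathbf l=(l_1,\dots,l_r)$ of positive integers. The only delicate factor is $\sup_i\omega_0(l_i)^\kappa$; I would replace it by $\sum_{i=1}^r\omega_0(l_i)^\kappa$, which decouples the sum into one ``heavy'' coordinate and $r-1$ ``light'' ones and produces the quantities $A_\kappa:=\sum_{l\geq1}\sqrt{p_l}\,\omega_0(l)^\kappa$ and $A_0:=\sum_{l\geq1}\sqrt{p_l}$. Condition (\ref{EQPN3}) with $l=1$, together with $\omega_0$ nondecreasing, gives $A_\kappa\leq C\sqrt{p_1}\,\omega_0(\kappa)^\kappa=:C_1\omega_0(\kappa)^\kappa$, while $A_0\leq C_2$ is a finite constant. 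Hence $\sum_{|\mathbf l|=r}\sup_i\omega_0(l_i)^\kappa\sqrt{p_{l_1}\cdots p_{l_r}}\leq r\,C_1\omega_0(\kappa)^\kappa C_2^{\,r-1}$, and combining with Lemma \ref{LEMCOVARIANCE3},
\[ \sum_{|\mathbf l|=r}\big|\langle e_{\mathbf l,\mathbf j},P_\kappa\circ\phi\rangle\big|\leq \frac{C_1\,C_2^{\,r-1}\,C^\kappa\,r^{\kappa+1}\,\omega_0(\kappa)^{2\kappa}\,\|Q_\kappa\|}{(1+|j_1|)^\alpha\cdots(1+|j_r|)^\alpha}. \]
It is essential here to keep the $\sup_i$ from Lemma \ref{LEMCOVARIANCE3} rather than passing to $\prod_i\omega_0(l_i)^\kappa$, for the latter would turn $\omega_0(\kappa)^{2\kappa}$ into $\omega_0(\kappa)^{(r+1)\kappa}$ and destroy any $r$-uniform control.

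It then remains to sum over $\kappa\geq r$. Inserting $\|Q_\kappa\|\leq\|f\|_\omega/(\kappa!\,\omega(\kappa)^\kappa)$, the $\kappa$-sum becomes $S_r=r\sum_{\kappa\geq r}(Cr\,\eta(\kappa))^\kappa/\kappa!$ with $\eta(\kappa)=\omega_0(\kappa)^2/\omega(\kappa)$, and the weight hypotheses are decisive: since $\omega_0(\kappa)^2=\omega_1(\kappa/2)\leq\omega_1(\kappa)$ and $\omega_1(\kappa)^2=o(\omega(\kappa))$, one gets $\eta(\kappa)\to0$. Using $\kappa!\geq(\kappa/e)^\kappa$ and, crucially, that the summation begins at $\kappa\geq r$ (so $r/\kappa\leq1$), each term is at most $(eC\eta(\kappa))^\kappa$, whence $S_r\leq r\sum_{\kappa\geq r}(eC\eta(\kappa))^\kappa$. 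Because $\eta(\kappa)\to0$ this tail decays faster than any geometric sequence: given $M>1$, I would fix $N$ so that $eC\,\eta(\kappa)\leq(2C_2M)^{-1}$ for $\kappa\geq N$, giving $S_r\leq 2r(2C_2M)^{-r}$ and hence $C_2^{\,r-1}S_rM^r\leq C$ for all $r\geq N$; the finitely many $r<N$ are absorbed into $C_{\omega_0,\omega,M}$. Assembling the pieces yields the claimed bound.

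The step I expect to be the main obstacle is this final extraction of an \emph{arbitrary} geometric rate $M^{-r}$, uniformly in $\mathbf j$: the bound coming from Lemma \ref{LEMCOVARIANCE3} carries a factor $r^\kappa$ that a priori grows with $r$, and it is only the interplay of Stirling's inequality with the constraint $\kappa\geq r$ and the decay $\omega_0(\kappa)^2=o(\omega(\kappa))$ that tames it. Everything else is routine once the homogeneous decomposition, the vanishing from Lemma \ref{LEMCOVARIANCE2}, and the summability conditions (\ref{EQPN3})–(\ref{EQPN4}) on $(p_l)$ are in place.
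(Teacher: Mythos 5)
Your proof is correct and follows essentially the same route as the paper's: decompose $f$ into homogeneous parts, use Lemma \ref{LEMCOVARIANCE2} to discard degrees $\kappa<r$, apply Lemma \ref{LEMCOVARIANCE3} and sum over $\mathbf l$ via $\sup_i\leq\sum_i$ together with (\ref{EQPN3}), and finally exploit $\omega_0(\kappa)^2=o(\omega(\kappa))$ combined with Stirling and the constraint $\kappa\geq r$ to extract the arbitrary geometric factor $M^{-r}$. The only cosmetic difference is that the paper absorbs $C^\kappa\omega_0(\kappa)^{2\kappa}$ into $C_{\omega_0,\omega,M}M^{-\kappa}\omega(\kappa)^\kappa$ term by term and then sums the resulting geometric series, whereas you bundle the same ingredients into the quantity $S_r$; the two bookkeepings are equivalent.
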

\begin{proof}
By the above lemmas, 
\begin{eqnarray*}
\sum_{|\mathbf l|=r}|a_{\mathbf l,\mathbf j}|&\leq&\sum_{|\mathbf l|=r}\sum_{\kappa=0}^{+\infty}\frac{|\langle e_{\mathbf l,\mathbf j},D^\kappa f(0)\circ\phi\rangle|}{\kappa !}\\
&\leq&\sum_{\kappa=r}^{+\infty}C^{\kappa}r^\kappa \frac{\|D^\kappa f(0)\|\omega_0(\kappa)^\kappa}{\kappa !(1+|j_1|)^\alpha\dots (1+|j_r|)^\alpha}\sum_{|\mathbf l|=r}
{\sup_i \omega_0(l_i)^\kappa \sqrt{p_{l_1}\dots p_{l_r}}}.
\end{eqnarray*}
By Stirling's formula, $\frac{r^\kappa}{\kappa !}\leq C^\kappa$ for any $\kappa\geq r$, so that
\begin{eqnarray*}
\sum_{|\mathbf l|=r}|a_{\mathbf l,\mathbf j}|
&\leq&\sum_{\kappa=r}^{+\infty}\frac{C^{\kappa} \|D^\kappa f(0)\|\omega_0(\kappa)^\kappa}{(1+|j_1|)^\alpha\dots (1+|j_r|)^\alpha}\sum_{|\mathbf l|=r}
{\sup_i \omega_0(l_i)^\kappa \sqrt{p_{l_1}\dots p_{l_r}}}.
\end{eqnarray*}
Now, for $\kappa\geq r$, 
\begin{eqnarray*}
\sum_{|\mathbf l|=r}\sup_i \omega_0(l_i)^\kappa \sqrt{p_{l_1}\dots p_{l_r}}&\leq &\sum_{i=1}^r \sum_{|\mathbf l|=r}\omega_0(l_i)^\kappa 
\sqrt{p_{l_1}\dots p_{l_r}}\\
&\leq&\sum_{i=1}^r \sum_{l_i=1}^{+\infty}\omega_0(l_i)^\kappa \sqrt{p_{l_i}}\prod_{\substack{j=1\\j\neq i}}^r \sum_{l=1}^{+\infty}\sqrt{p_l}\\
&\leq&C^r \sum_{l\geq 1}\sqrt{p_l}\omega_0(l)^\kappa\\
&\leq&C^\kappa \omega_0(\kappa)^\kappa
\end{eqnarray*}
where we have used (\ref{EQPN3}).

To conclude, observe that, given any absolute constant $C$, there exists some constant $C_{\omega_0,\omega,M}$ such that, for  any $\kappa$, 
$C^\kappa \omega_0(\kappa)^{2\kappa}\leq C_{\omega_0,\omega,M}M^{-\kappa}\omega(\kappa)^\kappa.$
This yields
\begin{eqnarray*}
\sum_{|\mathbf l|=r}|a_{\mathbf l,\mathbf j}|&\leq&\frac{C_{\omega_0,\omega,M}}{(1+|j_1|)^\alpha\dots (1+|j_r|)^\alpha}
\sum_{\kappa=r}^{+\infty}\frac{\|D^\kappa f(0)\|\omega(\kappa)^\kappa }{M^\kappa }\\
&\leq&\frac{C_{\omega_0,\omega,M}\|f\|_\omega}{M^r(1+|j_1|)^\alpha\dots (1+|j_r|)^\alpha}.
\end{eqnarray*}
\end{proof}

\subsection{The sequence of covariances}
In this subsection, we prove the first part of Theorem \ref{THMMAINTCL}, that devoted to the behaviour
of the sequence of covariances. We begin with a first lemma which is an easy consequence
of (\ref{EQBASISSIGMA}).
\begin{lemma}\label{LEMCOVARIANCE1}
Let $F=\sum_{\mathbf j,\mathbf l}a_{\mathbf l,\mathbf j}e_{\mathbf l,\mathbf j},G=\sum_{\mathbf j,\mathbf l}b_{\mathbf l,\mathbf j}e_{\mathbf l,\mathbf j}\in L^2_0(\NN^\ZZ,\bar \mu)$. Then
$$\cov(F\circ \sigma ^p,G)=\sum_{\mathbf j,\mathbf l}a_{\mathbf l,\mathbf j}b_{\mathbf l,\mathbf j-p}.$$
\end{lemma}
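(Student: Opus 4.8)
The plan is to obtain the formula directly from the single-shift relation (\ref{EQBASISSIGMA}) together with the fact that $\sigma$ preserves $\bar\mu$. First I would iterate (\ref{EQBASISSIGMA}): applying $e_{\mathbf l,\mathbf j}\circ\sigma=e_{\mathbf l,\mathbf j-1}$ exactly $p$ times gives
$$e_{\mathbf l,\mathbf j}\circ\sigma^p=e_{\mathbf l,\mathbf j-p}$$
for every $\mathbf j\in\ZZinfinf$ and $\mathbf l\in\NNinf$ with $|\mathbf l|=|\mathbf j|$ (here $\mathbf j-p$ means subtracting $p$ from each coordinate, which keeps the tuple strictly increasing, hence in $\ZZinfinf$; for $p<0$ the same computation is valid since $\sigma$ is invertible).

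Next I would reduce the covariance to an inner product. Since $F,G\in L^2_0(\NN^\ZZ,\bar\mu)$ have zero mean and $\bar\mu$ is $\sigma$-invariant, one has $\int F\circ\sigma^p\,d\bar\mu=\int F\,d\bar\mu=0$, so the product of the two means in the definition of $\cov$ drops out and
$$\cov(F\circ\sigma^p,G)=\pss{F\circ\sigma^p}{G}_{L^2(\NN^\ZZ,\bar\mu)}.$$
(The basis $(e_l)$ of Lemma \ref{LEMORTHONORMALBASIS} is real-valued, so no complex conjugation issue arises.)

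Then I would expand and evaluate. Because $F\mapsto F\circ\sigma^p$ is a surjective linear isometry of $L^2(\NN^\ZZ,\bar\mu)$, it is continuous, so I may apply it termwise to the expansion of $F$ and use the iterated relation:
$$F\circ\sigma^p=\sum_{\mathbf l,\mathbf j}a_{\mathbf l,\mathbf j}\,e_{\mathbf l,\mathbf j}\circ\sigma^p=\sum_{\mathbf l,\mathbf j}a_{\mathbf l,\mathbf j}\,e_{\mathbf l,\mathbf j-p}.$$
Pairing this with $G=\sum_{\mathbf l,\mathbf j}b_{\mathbf l,\mathbf j}e_{\mathbf l,\mathbf j}$ and using orthonormality of $\{e_{\mathbf l,\mathbf j}\}$, each term $a_{\mathbf l,\mathbf j}e_{\mathbf l,\mathbf j-p}$ selects precisely the coefficient $b_{\mathbf l,\mathbf j-p}$ of $G$ on $e_{\mathbf l,\mathbf j-p}$, which yields exactly $\sum_{\mathbf l,\mathbf j}a_{\mathbf l,\mathbf j}b_{\mathbf l,\mathbf j-p}$, as claimed.

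The only point requiring any care — and the step I would flag as the (mild) obstacle — is the legitimacy of the term-by-term manipulations: pushing $\sigma^p$ inside the infinite series and interchanging the sum with the inner product. Both are justified because $F,G\in L^2$, the family $\{e_{\mathbf l,\mathbf j}\}\cup\{1\}$ is an orthonormal basis, and composition with $\sigma^p$ is a unitary operator; continuity of the inner product then makes the termwise evaluation rigorous with no convergence difficulty.
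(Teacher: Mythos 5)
Your proof is correct and follows essentially the same route as the paper: the paper likewise expands $\cov(F\circ\sigma^p,G)$ bilinearly over the orthonormal basis, applies the relation (\ref{EQBASISSIGMA}) to get $e_{\mathbf l,\mathbf j}\circ\sigma^p=e_{\mathbf l,\mathbf j-p}$, and concludes by orthogonality, which forces $\mathbf m=\mathbf l$ and $\mathbf k=\mathbf j-p$. Your extra remarks (reducing the covariance to an inner product via the zero mean and $\sigma$-invariance, and justifying the termwise manipulations by unitarity of $F\mapsto F\circ\sigma^p$ and continuity of the inner product) simply make explicit the ``by linearity'' step the paper leaves implicit.
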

\begin{proof}
By linearity and by (\ref{EQBASISSIGMA}), 
$$\cov(F\circ\sigma^p,G)=\sum_{\mathbf j,\mathbf k,\mathbf l,\mathbf m}a_{\mathbf l,\mathbf j}b_{\mathbf m,\mathbf k}\cov(e_{\mathbf l,\mathbf j-p},e_{\mathbf m,\mathbf k}).$$
Now, $e_{\mathbf l,\mathbf j-p}$ and $e_{\mathbf m,\mathbf k}$ are orthogonal, unless $\mathbf l=\mathbf m$ and $\mathbf k=\mathbf j-p$. This gives the lemma.
\end{proof}

Let us now start with $f,g\in E_\omega$. Without loss of generality, we may assume that they have zero mean. We set
$F=f\circ\phi$, $G=g\circ \phi$ so that $\cov(f\circ T^p,g)=\cov(F\circ\sigma^p,G)$. Let us write
$$F=\sum_{r=1}^{+\infty}\sum_{|\mathbf l|=|\mathbf j|=r}a_{\mathbf l,\mathbf j}e_{\mathbf l,\mathbf j}\textrm{ and }
G=\sum_{r=1}^{+\infty}\sum_{|\mathbf l|=|\mathbf j|=r}b_{\mathbf l,\mathbf j}e_{\mathbf l,\mathbf j},$$
and let us compute $\cov(F\circ\sigma^p,G)$ using Lemma \ref{LEMCOVARIANCE1}. We apply the estimations of the Fourier coefficients
given by Lemma \ref{LEMCOVARIANCE2} and Lemma \ref{LEMCOVARIANCE3}. Using Stirling's formula, we find,
for a given $\mathbf j\in\ZZinfinf$ with $|\mathbf j|=r$,
$$\sum_{|\mathbf l|=r}|a_{\mathbf l,\mathbf j}b_{\mathbf l,\mathbf j-p}|\leq\sum_{|\mathbf l|=r}\sum_{\kappa,\kappa'=r}^{+\infty}
\frac{\|D^\kappa f(0)\| \|D^{\kappa'}f(0)\|C^{\kappa+\kappa'}\omega_0(\kappa)^{\kappa}\omega_0(\kappa')^{\kappa'} \sup_i\omega_0(l_i)^{\kappa+\kappa '}p_{l_1}\dots p_{l_r}}
{(1+|j_1-p|)^\alpha\dots (1+|j_r-p|)^\alpha (1+|j_1|)^\alpha\dots (1+|j_r|)^\alpha}.$$
Now, for $\kappa,\kappa'\geq r$,
\begin{eqnarray*}
\sum_{|\mathbf l|=r}\sup_i\omega_0(l_i)^{\kappa+\kappa '}p_{l_1}\dots p_{l_r}&\leq&
\sum_{i=1}^r \sum_{|\mathbf l|=r}\omega_0(l_i)^{\kappa+\kappa'}p_{l_1}\dots p_{l_r}\\
&\leq&\sum_{i=1}^r \left(\sum_{l_i=1}^{+\infty}p_{l_i}\omega_0(l_i)^{\kappa+\kappa'}\right)\times\left(\sum_{l\geq 1}p_l\right)^{r-1}\\
&\leq&Cr\omega_0(\kappa+\kappa')^{\kappa+\kappa'}\\
&\leq&C^{\kappa+\kappa'}\omega_1(\kappa)^{\kappa}\omega_1(\kappa')
\end{eqnarray*}
(at this stage, we use the strange relation (\ref{EQOMEGA0}) satisfied by $\omega_0$ and also (\ref{EQPN4})). Hence,
\begin{eqnarray*}
\sum_{|\mathbf l|=r}|a_{\mathbf l,\mathbf j-p}b_{\mathbf l,\mathbf j}|&\leq&
 \left(\sum_{\kappa=r}^{+\infty}C^\kappa \|D^\kappa f(0)\|\omega_1(\kappa)^{2\kappa}\right)
\times
 \left(\sum_{\kappa=r}^{+\infty}{C^\kappa \|D^\kappa g(0)\|\omega_1(\kappa)^{2\kappa}}\right)\times\\
&&\quad\quad\quad\quad\frac1{(1+|j_1-p|)^\alpha\dots (1+|j_r-p|)^\alpha (1+|j_1|)^\alpha\dots (1+|j_r|)^\alpha} \\
&\leq&\frac{C\|f\|_\omega \|g\|_\omega}{(1+|j_1-p|)^\alpha\dots (1+|j_r-p|)^\alpha (1+|j_1|)^\alpha\dots (1+|j_r|)^\alpha} .
\end{eqnarray*}
We now sum over $\mathbf j$ with $|\mathbf j|=r$:
\begin{eqnarray*}
\sum_{|\mathbf j|=|\mathbf l|=r}|a_{\mathbf l,\mathbf j-p}b_{\mathbf l,\mathbf j}|&\!\leq\!&\sum_{j_1<j_2<\dots<j_r}
\frac{C\|f\|_\omega \|g\|_\omega} {(1+|j_1-p|)^\alpha\dots (1+|j_r-p|)^\alpha (1+|j_1|)^\alpha\dots (1+|j_r|)^\alpha}\\
&\!\leq\!&C\|f\|_\omega \|g\|_\omega\left(\sum_{j\in\mathbb Z}\frac1{(1+|j-p|)^\alpha (1+|j|)^\alpha}\right)^r.
\end{eqnarray*}

We split this last sum into three sums : $\sum_{j\leq -1}$, $\sum_{j=0}^p$ and $\sum_{j=p+1}^{+\infty}$ and we observe that the first sum and the last sum are equal. 
We first consider these sums. We get different estimates following the value of $\alpha$. When $\alpha>1$, it is easy to check that, for $j\geq p+1$,
$$\frac 1{(1+(j-p))^\alpha}-\frac 1{(1+j)^\alpha}=\frac{(1+j)^\alpha-(1+j-p)^\alpha}{(1+j)^\alpha (1+(j-p))^\alpha}\geq \frac{p^\alpha}{(1+j)^\alpha (1+(j-p))^\alpha}.$$
Thus,
\begin{eqnarray*}
\sum_{j\geq p+1}\frac 1{(1+j)^\alpha(1+(j-p))^\alpha}&\leq&p^{-\alpha}\sum_{j\geq p+1}\left(\frac 1{(1+(j-p))^\alpha}-\frac1{(1+j)^\alpha}\right)\\
&\leq&p^{-\alpha}\left(\frac 1{2^\alpha}+\dots+\frac{1}{(p+1)^\alpha}\right)\leq C_\alpha p^{-\alpha}.
\end{eqnarray*}
On the other hand, 
$$\sum_{j=0}^p\frac{1}{(1+j)^\alpha (1+(p-j))^\alpha}\leq C_\alpha p^{-\alpha}\sum_{j=0}^{[p/2]}\frac{1}{(1+j)^\alpha}\leq C_\alpha p^{-\alpha}.$$
We finally find
$$\sum_{|\mathbf j|=|\mathbf l|=r}|a_{\mathbf l,\mathbf j-p}b_{\mathbf l,\mathbf j}|\leq C_\alpha\|f\|_\omega \|g\|_\omega \left(\frac{C_\alpha}{p^{\alpha}}\right)^r.$$
Summing this over $r$, we get
$$|\cov(F\circ\sigma^p,G)|\leq \frac{C_\alpha}{p^\alpha}\|f\|_\omega\|g\|_\omega$$
provided $p^\alpha\geq 2C_\alpha$.

The case $\alpha=1$ is the easiest one. Indeed, in that case, for $j\geq p+1$ one can write
$$\frac1{1+(j-p)}-\frac1{1+j}=\frac p{(1+j)(1+j-p)}.$$
We then argue exactly as before. Suppose now that $\alpha\in(1/2,1)$. On the one hand
$$\sum_{j\geq p+1}\frac{1}{(1+j)^\alpha (1+(j-p))^\alpha}\leq \sum_{j=1}^p \frac 1{j^\alpha(j+p)^\alpha}+
\sum_{j=p+1}^{+\infty}\frac 1{j^\alpha(j+p)^\alpha}.$$
Now,
$$\sum_{j=1}^p \frac 1{j^\alpha(j+p)^\alpha}\leq\frac1{p^\alpha}\sum_{j=1}^p \frac1{j^\alpha}\leq C_\alpha p^{1-2\alpha}$$
whereas
$$\sum_{j=p+1}^{+\infty}\frac 1{j^\alpha(j+p)^\alpha}\leq \sum_{j=p+1}^{+\infty}\frac 1{j^{2\alpha}}\leq {C_\alpha}p^{1-2\alpha}.$$
On the other hand,
\begin{eqnarray*}
\sum_{j=0}^p \frac1{(1+j)^\alpha (1+(p-j))^\alpha}&\leq&\frac2{p^\alpha}+\sum_{j=1}^{p-1}\frac 1{j^\alpha (p-j)^\alpha}\\
&\leq&\frac2{p^\alpha}+p^{1-2\alpha}\times\frac1p\sum_{j=1}^{p-1}\frac 1{\left(\frac jp\right)^\alpha \left(1-\frac jp\right)^\alpha}
\end{eqnarray*}
We recognize a Riemann sum of the function $x\mapsto x^\alpha (1-x)^\alpha$, so that
$$\sum_{j=0}^p \frac1{(1+j)^\alpha (1+(p-j))^\alpha}\leq C_\alpha p^{1-2\alpha}.$$
We conclude exactly like for the other cases.

\subsection{Central limit theorem}

We now prove that the central limit theorem holds for $f\in E_\omega$ with zero mean. Throughout the proof,
we assume $\alpha>1$.
We set $F=f\circ \phi=\sum_{\mathbf l,\mathbf j}a_{\mathbf l,\mathbf j} e_{\mathbf l,\mathbf j}$. We will apply Theorem A with the filtration $(\mathcal F_i)_{i\in\mathbb Z}$ defined by $\mathcal F_i=\sigma^{-i}(\mathcal F_0)$ and 
$$\mathcal F_0=\dots\times\Omega\times\dots\times\Omega\times\mathcal P(\NN)\times\mathcal P(\NN)\times\dots,$$
where $\Omega=\{\varnothing,\NN\}$ and the first $\mathcal P(\NN)$ is at the $0$-th position. 
The filtration $(\mathcal F_i)_{i\in\mathbb Z}$ is increasing, with $\mathcal F_{\infty}=\mathcal P(\NN)^\ZZ$ and $\mathcal F_{-\infty}=\{\varnothing,\NN^\ZZ\}.$
Thus, $F\in L^2(\mathcal F_{\infty})\ominus L^2(\mathcal F_{-\infty})$. By (\ref{EQBASISSIGMA}), 
$$F\circ\sigma^p =\sum_{\mathbf l,\mathbf j}a_{\mathbf l,\mathbf j} e_{\mathbf l,\mathbf j-p}=\sum_{\mathbf l,\mathbf j}a_{\mathbf l,\mathbf j+p} e_{\mathbf l,\mathbf j}$$
so that
$$S_n(F)=\sum_{\mathbf l,\mathbf j}\sum_{p=0}^{n-1}a_{\mathbf l,\mathbf j+p} e_{\mathbf l,\mathbf j}.$$
If we take the conditional expectation, then we find
\begin{eqnarray*}
\mathbb E(S_n(F)|\mathcal F_0)&=&\sum_{\substack{\mathbf l,\mathbf j\\j_1\geq 0}}\sum_{p=0}^{n-1}a_{\mathbf l,\mathbf j+p} e_{\mathbf l,\mathbf j}\\
S_n(F)-\mathbb E(S_n(F)|\mathcal F_n)&=&\sum_{\substack{\mathbf l,\mathbf j\\j_1<-n}}\sum_{p=0}^{n-1}a_{\mathbf l,\mathbf j+p} e_{\mathbf l,\mathbf j}.
\end{eqnarray*}
During the proof, we will need the two following technical facts.
\begin{Fact1}
For any $n\geq 1$, 
$$\sum_{j\geq 0}\left(\sum_{p=0}^{n-1}\frac 1{(1+j+p)^\alpha}\right)^2\leq C_\alpha\max\left(n^{3-2\alpha},\log(n+1)\right).$$
\end{Fact1}

\begin{Fact2}
For any $n\geq 1$ and any $r\geq 1$,
\begin{eqnarray*}
\sum_{\substack{j_1<-n\\j_2,\dots,j_r\in\ZZ}}\left|\sum_{p=0}^{n-1}\frac 1{(1+|j_1+p|)^\alpha\dots (1+|j_r+p|)^\alpha}\right|^2&\leq& \sum_{j_1<-n}
\left(\sum_{p=0}^{n-1}\frac 1{(1+|j_1+p|)^\alpha}\right)^2\times\\
&&\quad\left(\sum_{j\in\ZZ}\frac 1{(1+|j|)^\alpha}\right)^{r-1}.
\end{eqnarray*}
\end{Fact2}

We postpone the proof of these two facts and we show that the conditions of Theorem A are satisfied. First,
\begin{eqnarray*}
\left\|\mathbb E(S_n(F)|\mathcal F_0)\right\|_2^2&=&\sum_{r\geq 1}\sum_{\substack{|\mathbf j|=r\\j_1\geq 0}}\sum_{|\mathbf l|=r}\left|\sum_{p=0}^{n-1}a_{\mathbf l,\mathbf j+p} \right|^2\\
&\leq&\sum_{r\geq 1}\sum_{\substack{|\mathbf j|=r\\j_1\geq 0}}\left(\sum_{p=0}^{n-1}\sum_{|\mathbf l|=r}|a_{\mathbf l,\mathbf j+p} |\right)^2.
\end{eqnarray*}
We apply Lemma \ref{LEMSUMFOURIER} with $M^2\geq 2\sum_{j\geq 1}\frac 1{(1+j)^{2\alpha}}.$ This yields
\begin{eqnarray*}
\left\|\mathbb E(S_n(F)|\mathcal F_0)\right\|_2^2&\leq&\sum_{r\geq 1}\frac{C_{\omega_0,\omega,\alpha}\|f\|_\omega^2}{M^{2r}}\sum_{0\leq j_1<\dots<j_r}
\left(\sum_{p=0}^{n-1}\frac 1{(1+j_1+p)^\alpha\dots (1+j_r+p)^\alpha}\right)^2\\
&\leq&\sum_{r\geq 1}\frac{C_{\omega_0,\omega,\alpha}\|f\|_\omega^2}{M^{2r}}\sum_{j_2,\dots,j_r\geq 1}\frac1{(1+j_2)^{2\alpha}\dots(1+j_r)^{2\alpha}}\times\\
&&\quad\quad
\sum_{j_1\geq 0}\left(\sum_{p=0}^{n-1}\frac 1{(1+j_1+p)^\alpha}\right)^2.
\end{eqnarray*}
We now apply Fact 1 to get 
\begin{eqnarray*}
\left\|\mathbb E(S_n(F)|\mathcal F_0)\right\|_2^2&\leq&\sum_{r\geq 1}\frac{C_{\omega_0,\omega,\alpha}\|f\|_\omega^2}{2^r}\times \max\big(n^{3-2\alpha},\log(n+1)\big)\\
&\leq&C_{\omega_0,\omega,\alpha}\|f\|_\omega^2 \max\big(n^{3-2\alpha},\log(n+1)\big).
\end{eqnarray*}
Since $(3-2\alpha)/2<1/2$, this yields the convergence of $\sum_{n\geq 1}\frac{\left\|\mathbb E(S_n(F)|\mathcal F_0)\right\|_2}{n^{3/2}}$. 

\smallskip

We now turn to the second sum. The beginning of the estimation is completely similar, except that we now apply Lemma \ref{LEMSUMFOURIER} with $M^2=2\sum_{j\in\mathbb Z}\frac 1{(1+| j|)^{\alpha}}.$ We thus obtain 
$$\left\|S_n(F)-\mathbb E(S_n(F)|\mathcal F_n)\right\|_2^2\leq \sum_{r\geq 1}\frac{C_{\omega_0,\omega,\alpha}\|f\|_\omega^2}{M^{2r}}\sum_{\substack{|\mathbf j|=r\\j_1<-n}}
\left(\sum_{p=0}^{n-1}\frac 1{(1+|j_1+p|)^\alpha\dots (1+|j_r+p|)^\alpha}\right)^2.$$
At this stage, we can no longer majorize $\frac1{(1+|j_k+p|)^\alpha}$ by $\frac1{(1+|j_k|)^\alpha}$  for $k\geq 2$ since
it is possible that $j_k\leq 0$. We use Fact 2 instead. It yields
\begin{eqnarray*}
\left\|S_n(F)-\mathbb E(S_n(F)|\mathcal F_n)\right\|_2^2&\leq&\sum_{r\geq 1}\frac{C_{\omega_0,\omega,\alpha}\|f\|_\omega^2}{M^{2r}}\sum_{j_1<-n}
\left|\sum_{p=0}^{n-1}\frac 1{(1+|j_1+p|)^\alpha}\right|^2\times\\
&&\quad\quad\left(\sum_{j\in\ZZ}\frac1{(1+|j|)^\alpha}\right)^{r-1}.
\end{eqnarray*}
The definition of $M$ and the changes of variables $j=-j_1-n$, $q=n-1-p$ imply
\begin{eqnarray*}
\left\|S_n(F)-\mathbb E(S_n(F)|\mathcal F_n)\right\|_2^2&\leq&\sum_{r\geq 1}\frac{C_{\omega_0,\omega,\alpha}\|f\|_\omega^2}{2^r}\sum_{j>0}
\left|\sum_{q=0}^{n-1}\frac 1{(1+j+q)^\alpha}\right|^2\\
&\leq&C_{\omega_0,\omega,\alpha}\|f\|_\omega^2\max\big(n^{3-2\alpha},\log(n+1)\big)
\end{eqnarray*}
where the last inequality follows from Fact 1. As before,
$$\sum_{n\geq 1}\frac{\left\|S_n(F)-\mathbb E(S_n(F)|\mathcal F_n)\right\|_2}{n^{3/2}}<+\infty.$$
Hence, $F$ satisfies the CLT.

\begin{proof}[Proof of Fact 1]
We first observe that there exists $C_\alpha>0$ such that, for any $j\geq 0$, 
$$\sum_{p=0}^{n-1}\frac1{(1+j+p)^\alpha}\leq C_\alpha\frac1{(1+j)^{\alpha-1}}.$$
Hence, 
\begin{eqnarray*}
\sum_{j=0}^n \left(\sum_{p=0}^{n-1}\frac1{(1+j+p)^\alpha}\right)^2&\leq&C_\alpha\sum_{j=0}^n \frac1{(1+j)^{2\alpha-2}}\\
&\leq&C_\alpha \begin{cases}
1&\textrm{ provided } \alpha>3/2\\
\log(n+1)&\textrm{ provided } \alpha=3/2\\
n^{3-2\alpha}&\textrm{ provided } \alpha\in(1,3/2).
\end{cases}
\end{eqnarray*}
For the remaining part of the sum, we just write
$$\sum_{p=0}^{n-1}\frac 1{(1+j+p)^\alpha}\leq\frac n{j^\alpha}$$
so that
$$\sum_{j>n}\left(\sum_{p=0}^{n-1}\frac 1{(1+j+p)^\alpha}\right)^2\leq n^2\sum_{j>n}\frac1{j^{2\alpha}}\leq C_\alpha n^{3-2\alpha}.$$
\end{proof}
\begin{proof}[Proof of Fact 2]
Let 
$$S=\sum_{\substack{j_1<-n\\j_2,\dots,j_r\in\mathbb Z}}\left(\sum_{p=0}^{n-1}\frac 1{(1+|j_1+p|)^\alpha\dots (1+|j_r+p|)^\alpha}\right)^2.$$
We expand the square to get
\begin{eqnarray*}
S&=&\sum_{\substack{j_1<-n\\j_2,\dots,j_r\in\mathbb Z}}\left(\sum_{p=0}^{n-1}\frac1{(1+|j_1+p|)^{2\alpha}\dots (1+|j_r+p|)^{2\alpha}}+\right.\\
&&\quad\quad\quad\quad\quad\quad \left.2\sum_{0\leq p<q\leq n-1}
\frac1{(1+|j_1+p|)^\alpha\dots (1+|j_r+q|)^\alpha}\right).
\end{eqnarray*}
We put the sum over $j_2,\dots,j_r$ inside and we observe that, for a fixed $p\in\mathbb Z$, 
\begin{eqnarray*}
\sum_{j_2,\dots,j_r\in\mathbb Z}\frac1{(1+|j_2+p|)^{2\alpha}\dots (1+|j_r+p|)^{2\alpha}}&=&\sum_{j_2,\dots,j_r\in\mathbb Z}\frac 1{
(1+|j_2|)^{2\alpha}\dots (1+|j_r|)^{2\alpha}}\\
&=&\left(\sum_{j\in\mathbb Z}\frac 1{(1+|j|)^{2\alpha}}\right)^{r-1}.
\end{eqnarray*}
Similarly, for a fixed $(p,q)\in\mathbb Z^2$, since $(1+|j_k+q|)\geq 1$, we get
\begin{eqnarray*}
\sum_{j_2,\dots,j_r\in\mathbb Z}\frac1{(1+|j_2+p|)^{\alpha}\dots (1+|j_r+q|)^{\alpha}}&\leq&\sum_{j_2,\dots,j_r\in\mathbb Z}\frac 1{
(1+|j_2+p|)^{\alpha}\dots (1+|j_r+p|)^{\alpha}}\\
&\leq&\sum_{j_2,\dots,j_r\in\mathbb Z}\frac 1{
(1+|j_2|)^{\alpha}\dots (1+|j_r|)^{\alpha}}\\
&=&\left(\sum_{j\in\mathbb Z}\frac 1{(1+|j|)^\alpha}\right)^{r-1}.
\end{eqnarray*}
Hence,
\begin{eqnarray*}
S&\leq&\sum_{j_1<-n}\left(\sum_{p=0}^{n-1}\frac1{(1+|j_1+p|)^{2\alpha}}+2\sum_{0\leq p<q\leq n-1}
\frac1{(1+|j_1+p|)^\alpha (1+|j_1+q|)^\alpha}\right)\times\\
&&\quad\quad\left(\sum_{j\in\mathbb Z}\frac 1{(1+|j|)^\alpha}\right)^{r-1}\\
&\leq&\sum_{j_1<-n}\left(\sum_{p=0}^{n-1}\frac 1{(1+|j_1+p|)^\alpha}\right)^2\times\quad\left(\sum_{j\in\ZZ}\frac 1{(1+|j|)^\alpha}\right)^{r-1}  
\end{eqnarray*}

\end{proof}

\section{Further remarks}\label{SECFURTHER}

\subsection{Unconditional convergence}
In the statement of Theorem \ref{THMMAINTCL}, Condition (iv) is not very pleasant. During the proof it is used at two places: in Lemma \ref{LEMXBMUISLARGE} and in Lemma \ref{LEMCOVARIANCE3}. We can delete this assumption if we accept to work only with polynomials instead of functions in $E_\omega$.
\begin{theorem}\label{THMMAINTCLPOLY}
Let $T\in\LX$. Suppose that there exist a dense set $\mathcal D\subset X$ and a  sequence of maps $S_n:\mathcal D\to X$, $n\geq 0$, such that,
for any $x\in X$,
\begin{enumerate}[(i)]
\item $\sum_{n\geq 0} T^nx$ converges unconditionally;
\item $\sum_{n\geq 0} S_n x$ converges unconditionally;
\item $T^n S_n x= x$ and $T^mS_n x=S_{n-m}x$ for any $n>m$;
\end{enumerate}
Then there exists a $T$-invariant strongly mixing Borel probability measure $\mu$ on $X$
with full support such that $\mathcal P \subset L^2(X,\mathcal B,\mu)$.\\
Suppose moreover that there exists $\alpha>1/2$ such that, for any $x\in\mathcal D$, $\|T^n x\|=O(n^{-\alpha})$ 
and $\|S_n x\|=O(n^{-\alpha})$. 
Then, for any $f,g\in \mathcal P$,
$$|\cov(f\circ T^n,g)|\leq C_{f,g,\alpha}
\left\{
\begin{array}{ll}
n^{1-2\alpha}&\textrm{ if }\alpha\in(1/2,1)\\
\displaystyle \frac{\log (n+1)}n&\textrm{ if }\alpha=1\\
n^{-\alpha}&\textrm{ if }\alpha>1.
\end{array}\right.
$$
If $\alpha>1$, then for any $f\in \mathcal P$ with zero mean, the sequence $\dis\frac{1}{\sqrt n}(f+\dots+f\circ T^{n-1})$
converges in distribution to a Gaussian random variable of zero mean and finite variance.
\end{theorem}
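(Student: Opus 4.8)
The plan is to keep verbatim the construction of Section~\ref{SECCONSTRUCTIONMEASURE} and the computations of Section~\ref{SECPROOF}, observing that Condition~(iv) of Theorem~\ref{THMMAINTCL} was invoked at exactly two spots, Lemma~\ref{LEMXBMUISLARGE} and Lemma~\ref{LEMCOVARIANCE3}, and that both can be salvaged for polynomials out of plain unconditional convergence. Indeed the measure $\mu$ itself, together with strong mixing and full support, is produced from (i), (ii), (iii) and the density of $\mathcal D$ alone, so that part of the statement is already covered by Section~\ref{SECCONSTRUCTIONMEASURE}; what has to be redone is (a) the moment bound $\|\cdot\|^d\in L^2(X,\mathcal B,\mu)$ for every $d$, whence $\mathcal P\subset L^2$, and (b) the Fourier coefficient estimates of Lemmas~\ref{LEMCOVARIANCE2} and \ref{LEMCOVARIANCE3}. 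A crucial simplification is that Lemma~\ref{LEMCOVARIANCE2}, which is purely algebraic and needs no change, forces $a_{\mathbf l,\mathbf j}=0$ as soon as $|\mathbf l|>d$: for a fixed polynomial all the sums over $r$ appearing in Section~\ref{SECPROOF} are therefore finite, and the auxiliary function $\omega_1$, the relation~(\ref{EQOMEGA0}) and the factor $M^{-r}$ of Lemma~\ref{LEMSUMFOURIER} become unnecessary.

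I would first fix a dense sequence $(x_n)\subset\mathcal D$ with $x_1=0$, and let $M_n$ be the (finite, by unconditional convergence) supremum of $\|\sum_{k\in A}T^kx_n\|$ over finite $A\subset\ZZ$, together with the tail quantities $M_n(K)=\sup\{\|\sum_{k\in A}T^kx_n\|:\ A\subset\{|k|>K\}\}$, which tend to $0$ as $K\to+\infty$. The point is that under the product measure $\bar\mu$ a fixed value $n$ occurs nearest to the origin at a position of typical size $1/p_n$, so that its total contribution $\|\sum_{k:\,n_k=n}T^kx_n\|$ is governed by $M_n(k^{(n)}_1)$, where $k^{(n)}_1=\min\{|k|:\ n_k=n\}$. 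Writing $\|\phi((n_k))\|\leq\sum_n\|\sum_{k:\,n_k=n}T^kx_n\|$ and bounding $\mathbb E\big[M_n(k^{(n)}_1)\big]\leq M_n(K)+M_nKp_n$ for a suitable threshold $K=K(n)$, I would choose $(p_l)$ decaying fast enough that $\sum_n\big(M_n(K(n))+M_nK(n)p_n\big)<+\infty$; the higher moments $\int\|\phi\|^{2d}d\bar\mu$ are controlled in the same way after expanding the power and using that the cross terms are tamed by the same fast decay. This replaces Lemma~\ref{LEMXBMUISLARGE} without Condition~(iv) and yields $\mathcal P\subset L^2(X,\mathcal B,\mu)$.

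For the quantitative part I would rerun the proof of Lemma~\ref{LEMCOVARIANCE3} with the per-vector bound $\|T^jx_n\|\leq C_n(1+|j|)^{-\alpha}$, the hypothesis $\|T^nx\|=O(n^{-\alpha})$ applied to each $x_n$, playing the role previously played by $\omega_0(n)(1+|j|)^{-\alpha}$. The inner integral $\int_{\NN}|e_l(n)|\,\|T^jx_n\|^u\,d\bar\mu_0(n)$ is then estimated exactly as in the excerpt, the constants $C_l$ now replacing $\omega_0(l)$, provided $(p_l)$ satisfies the analogues of (\ref{EQPN3})--(\ref{EQPN4}) with $C_l^d$ in place of $\omega_0(l)^d$; the factor $\int\|\phi_{\mathbf j}\|^{u_0}d\bar\nu_{\mathbf j}$ with $u_0\leq d$ is finite by the previous paragraph. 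Summing over $|\mathbf l|=r$ as in Lemma~\ref{LEMSUMFOURIER}, and then over the finitely many $r\leq d$, gives the only estimate the rest of the proof needs, namely $\sum_{|\mathbf l|=r}|a_{\mathbf l,\mathbf j}|\leq C_{P}\prod_{i}(1+|j_i|)^{-\alpha}$. With this in hand, the computation of the covariances and the verification of the hypotheses of Theorem~A through Fact~1 and Fact~2 go through word for word, producing the three-regime bound on $|\cov(f\circ T^n,g)|$ for $\alpha>1/2$ and the central limit theorem for $\alpha>1$.

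The main obstacle is the simultaneous, interlocked choice of the parameters. The constants $M_n$, $M_n(\cdot)$, $C_n$ are determined once the dense sequence $(x_n)$ is fixed; the blocks $(N_l)$ must still be chosen as in Section~\ref{SECCONSTRUCTIONMEASURE} to guarantee a.e.\ convergence of $\phi$ and full support; and $(p_l)$ must then be made to decrease fast enough to secure, at once and for every degree $d$, the moment bounds of the second paragraph, the weighted-tail estimates (\ref{EQPN3})--(\ref{EQPN4}) with $C_l$, and the positivity $\prod_l\beta_l^2>0$ needed for full support. That these requirements are mutually compatible follows from a diagonal argument; for instance, imposing $\sqrt{p_{m+1}}\,C_{m+1}^{m+1}\leq\tfrac12\sqrt{p_m}\,C_m^m$ handles all degrees uniformly, exactly as the condition displayed after~(\ref{EQPN4}) did for $\omega_0$, and once it is in place every remaining step is a specialisation of computations already carried out.
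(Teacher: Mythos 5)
Your proposal is correct, and its overall architecture coincides with the paper's: keep the measure of Section \ref{SECCONSTRUCTIONMEASURE} unchanged (it only needs (i)--(iii) and density), observe via Lemma \ref{LEMCOVARIANCE2} that for a polynomial of degree $d$ all sums over $r$ truncate at $r\leq d$ so that $\omega_1$, (\ref{EQOMEGA0}) and the $M^{-r}$ gain become irrelevant, and then repair the two places where (iv) was used, accepting constants $C_d$ (resp.\ $C_P$) in place of $C^d r^d\omega_0(d)^d$. Your treatment of Lemma \ref{LEMCOVARIANCE3} — a per-vector bound $\|T^jx_n\|\leq C_n(1+|j|)^{-\alpha}$ fed into the same inner integral, with (\ref{EQPN3})--(\ref{EQPN4}) reimposed relative to $C_l$ — is exactly what the paper does (the paper merely relabels so that $C_n\leq\omega_0(n)$ for a prescribed $\omega_0$). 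The one genuinely different ingredient is the moment bound $\|\cdot\|^d\in L^2$: you group the series $\sum_k T^k x_{n_k}$ by the \emph{value} $n$ of the symbol, control each group by the tail modulus $M_n(k_1^{(n)})$ and the probability $O(Kp_n)$ that the value $n$ appears within distance $K$ of the origin, whereas the paper groups by the \emph{position} blocks $N_l<|k|\leq N_{l+1}$, splits each block according to whether $n_k\leq l$ (where the $2^{-l}$ bound already wired into the choice of $(N_l)$ applies, using $x_1=0$) or $n_k>l$ (an event of probability $\leq\sum_{m>l}p_m$), and then applies Minkowski in $L^{2d}$ over the blocks. Both rest on the same principle that rare symbols occur far from the origin; the paper's version is slightly more economical because the needed tail estimates come for free from the construction of $(N_l)$, while yours buys independence from that construction at the cost of two points you should make explicit: the regrouping $\sum_k T^kx_{n_k}=\sum_n\sum_{k:\,n_k=n}T^kx_n$ must be justified by the a.e.\ summability of the group suprema $\sum_n\sup_{A\subset\{k:\,n_k=n\}}\|\sum_{k\in A}T^kx_n\|$, and the passage to the $2d$-th moment should be done via the triangle inequality in $L^{2d}$, $\bigl(\mathbb E Y_n^{2d}\bigr)^{1/2d}\leq M_n(K(n))+M_n\bigl((2K(n)+1)p_n\bigr)^{1/2d}$, with $(p_n)$ chosen (super-geometrically, by a diagonal argument as you indicate) so that the resulting series converges for every $d$ simultaneously. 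With those two routine points supplied, your argument is complete.
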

\begin{proof}
We just point out the main differences with the proof of Theorem \ref{THMMAINTCL}. Let $\omega_0:\mathbb N\to (1,+\infty)$ be any nondecreasing function
going to $+\infty$. Let $(x_n)_{n\in\mathbb N}$ be a dense sequence in $\mathcal D$ with $x_1=0$ and $\|T^k x_n\|\leq \omega_0(n)$ for any $n\geq 1$ and any $k\in\mathbb Z$. We may also ask, if we want to prove the second part of Theorem \ref{THMMAINTCL2}, that for any $n\geq 1$ and any $k\in\mathbb Z$, 
$$\|T^k x_n\|\leq\frac{\omega_0(n)}{(1+|k|)^\alpha}.$$
We construct the measure exactly like in Section \ref{SECCONSTRUCTIONMEASURE}, except that we require that the sequence $(p_l)$ also satisfies
$$\forall d\geq 1,\ \forall l\geq 2d,\ \omega_0(l)^{2d}\leq p_l^{-1/2},$$
$$\forall d\geq 1,\ \sum_{l\geq 1}(N_{l+1}-N_l)p_l^{1/4d}<+\infty.$$
To prove that $\mathcal P\subset L^2(X,\mathcal B,\mu)$, it suffices to show that, for any $d\geq 1$, $\|\cdot\|^d\in L^2(X,\mathcal B,\mu)$. Now, 
by the triangle inequality,
\begin{eqnarray*}
\int_X \|x\|^{2d}&=&\int_{\NN^{\ZZ}}\left\|\sum_{l\geq 1}\sum_{|k|=N_l}^{N_{l+1}}T^k x_{{n_k}}\right\|^{2d}d\bar \mu\big((n_k)\big)\\
&\leq&\sum_{l_1,\dots,l_{2d}\geq 1}\int_{\NN^{\ZZ}}\prod_{i=1}^{2d}\left\|\sum_{|k|=N_{l_i}}^{N_{{l_i}+1}}T^k x_{{n_k}}\right\|d\bar \mu\big((n_k)\big).
\end{eqnarray*}
We then apply H\"older's inequality to get 
$$\int_X \|x\|^{2d}\leq\sum_{l_1,\dots,l_{2d}\geq 1}\prod_{i=1}^{2d}\left(\int_{\NN^{\ZZ}}\left\|\sum_{|k|=N_{l_i}}^{N_{{l_i}+1}}T^k x_{{n_k}}\right\|^{2d}d\bar \mu\big((n_k)\big)\right)^{1/2d}.$$
We fix some $l\geq 1$ and we want to estimate $\int_{\NN^{\ZZ}}\left\|\sum_{|k|=N_l}^{N_{l+1}}T^k x_{{n_k}}\right\|^{2d}d\bar \mu\big((n_k)\big)$.
Let $(n_k)\subset\NN^{\ZZ}$ and let us write
\begin{eqnarray*}
\left\|\sum_{|k|=N_l}^{N_{l+1}}T^k x_{{n_k}}\right\|^{2d}&\leq&2^{2d}\left(\left\|\sum_{\substack{|k|=N_l\\n_k\leq l}}^{N_{l+1}}T^k x_{n_k}\right\|^{2d}+
\left\|\sum_{\substack{|k|=N_l\\n_k> l}}^{N_{l+1}}T^k x_{n_k}\right\|^{2d}\right)\\
&\leq&\frac{2^{2d}}{2^{ld}}+2^{4d-1}(N_{l+1}-N_l)^{2d-1}\sum_{|k|=N_l}^{N_{l+1}}\left\|T^k x_{n_k}\right\|^{2d}\times\mathbf 1_{\{n_k>l\}}.
\end{eqnarray*}
We integrate this inequality over $\NN^{\ZZ}$ to find 
\begin{eqnarray*}
\int_{\NN^{\ZZ}}\left\|\sum_{|k|=N_l}^{N_{l+1}}T^k x_{{n_k}}\right\|^{2d}d\bar \mu\big((n_k)\big)&\leq&\frac{2^{2d}}{2^{ld}}+2^{4d-1}(N_{l+1}-N_l)^{2d-1}\sum_{|k|=N_l}^{N_{l+1}}\sum_{m>l}p_m\left\|T^k x_{m}\right\|^{2d}\\
&\leq&\frac{2^{2d}}{2^{ld}}+2^{4d}(N_{l+1}-N_l)^{2d}\sum_{m>l}p_m\omega_0(m)^{2d}\\
&\leq&\frac{2^{2d}}{2^{ld}}+2^{4d}(N_{l+1}-N_l)^{2d}p_l\max\big(\omega_0(l),\omega_0(2d)\big)^{2d}\\
&\leq&\frac{2^{2d}}{2^{ld}}+2^{4d}(N_{l+1}-N_l)^{2d}p_l^{1/2}\omega_0(2d)^{2d},
\end{eqnarray*}
since we assumed $\omega_0(l)^{2d}p_l\leq p_l^{1/2}.$ We take the power $1/2d$ and we sum the inequalities to get
\begin{eqnarray*}
\int_X \|x\|^{2d}d\mu(x)&\leq&C\left(\sum_{l\geq 1}\left(\frac{2}{2^{l/2}}+4(N_{l+1}-N_l)p_l^{1/4d}\right)\right)^{2d}\omega_0(2d)^{2d}\\
&\leq&C_d.
\end{eqnarray*}
Thus, $\|\cdot\|^d$ belongs to $L^2(X,\mathcal B,\mu)$. Contrary to what happens in Lemma \ref{LEMXBMUISLARGE}, we cannot control its norm by $\omega_0(2d)^d$, but only by some constant $C_d$ which can be much larger. This also affects Lemma \ref{LEMCOVARIANCE3}, where we have to replace $C^d r^d\omega_0(d)^d$ by some constant $C_d$ depending on $d$. However, when we want to study the sequence of covariances and the validity of the Central Limit Theorem for the sequence $(f\circ T^n)$, with $f$ a polynomial, this is unimportant. Indeed, the sum over $r$ which appears in both proofs is now a finite sum.
\end{proof}

\subsection{Central limit theorems and the Gaussian measure}
When $T$ satisfies the assumptions of Theorem \ref{THMMURILLOPERIS}, we have two ways to define a $T$-invariant Gaussian measure on $X$ with full support:
the Gaussian measure of \cite{BAYMATHERGOBEST} and the measure constructed in \cite{MuPe13} or in Section \ref{SECPROOF}. The Gaussian measure is probably
simpler. However, it is easier to understand why $T$ is ergodic with respect to the measure of Section \ref{SECPROOF}: it behaves like a Bernoulli shift.
This was very useful to apply the martingale method, in particular to have a "canonical" choice of the filtration $(\mathcal F_i)$ to apply Theorem A.

If we want to apply Theorem A with the Gaussian ergodic measure, it is not clear which filtration could be convenient. A particular case is that of 
backward shifts: in that case, $T$ is already a shift! Hence, on $\ell^2(\mathbb Z_+)$, we can prove a statement similar to Theorem \ref{THMTCLBS} with a Gaussian measure and replacing $E_\omega$ by some subspace of $L^2$ similar to those of Devinck.

\begin{question}
Let $T\in\mathfrak L(X)$ satisfying the assumptions of Theorem \ref{THMERGOBEST} and let $\mu$ be a $T$-invariant  and ergodic Gaussian measure on $X$
with full support. Does there exist a big subspace $E\subset L^2(\mu)$ such that any $f\in E$ satisfies the CLT?
\end{question}

\subsection{Central limit theorem for linear forms}
It is a little bit deceiving in the statement of  Theorem \ref{THMMAINTCL} that the sequence $(\veps_k)$ is not directly related to the decay needed on $\|T^k x\|$ to obtain a CLT. 
It turns out that this is the case if we restrict ourselves to linear forms.

\begin{theorem}
Let $T\in\LX$, let $\alpha>1$ and let $\omega:\mathbb N\to(1,+\infty)$ going to $+\infty$. Suppose that, for any function $\omega_0:[1,+\infty)\to(1,+\infty)$ going to infinity, one can find a sequence $\mathcal D=(x_n)_{n\geq 1}\subset X$
and a  sequence of maps $S_n:\mathcal D\to X$, $n\geq 0$, such that
\begin{enumerate}[(i)]
\item For any $x\in\mathcal D$, $\sum_{n\geq 0} T^nx$ converges unconditionally;
\item For any $x\in\mathcal D$, $\sum_{n\geq 0} S_n x$ converges unconditionally;
\item For any $x\in\mathcal D$, $T^n S_n x= x$ and $T^mS_n x=S_{n-m}x$ for any $n>m$;
\item For any sequence $(n_k)\subset\NN^{\ZZ}$ such that $\sum_{k\geq 0}T^k x_{n_k}$ and $\sum_{k<0}S_{-k} x_{n_k}$
are convergent, 
$$\left\| \sum_{k\geq 0}T^k x_{n_k}\right\|+\left\|\sum_{k<0}S_{-k} x_{n_k}\right\|\leq \sum_{k\in\ZZ}\omega_0(n_k)k^{-\alpha}.$$
\item $\textrm{span}\big(\{T^k x_n;S_k x_n\};\ k\geq 0,\ n\geq 1\big)$ is dense in $X$.
\end{enumerate}
Then there exists a $T$-invariant strongly mixing Borel probability measure $\mu$ on $X$
with full support such that $E_\omega \subset L^2(X,\mathcal B,\mu)$. Moreover, for any $x^*\in X^*$,  the sequence $\dis\frac{1}{\sqrt n}(x^*+\dots+ x^*\circ T^{n-1})$
converges in distribution to a Gaussian random variable of finite variance.
\end{theorem}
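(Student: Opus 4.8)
The first assertion is immediate: taking $\veps_k=(1+|k|)^{-\alpha}$, which lies in $\ell^1(\ZZ)$ since $\alpha>1$, Condition~(iv) here is a special case of Condition~(iv) of Theorem~\ref{THMMAINTCL}, so the existence of the $T$-invariant strongly mixing measure $\mu$ with full support and $E_\omega\subset L^2(\mu)$ follows verbatim. For the central limit theorem I would argue as in Section~\ref{SECPROOF}, but exploit that a linear form is a polynomial of degree one. Replacing $x^*$ by $x^*-\int_X x^*\,d\mu$ (legitimate since $x^*\in E_\omega\subset L^2(\mu)$), I may assume $\int_X x^*\,d\mu=0$; set $F=x^*\circ\phi\in L^2_0(\NN^\ZZ,\bar\mu)$. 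By Lemma~\ref{LEMCOVARIANCE2} applied with $d=1$, all Fourier coefficients of $F$ of order $r\geq 2$ vanish, so $F=\sum_{j\in\ZZ}\sum_{l\geq 1}a_{l,j}e_{l,j}$ with $a_{l,j}=\sum_{i\geq l}p_i\,e_l(i)\,x^*(T^j x_i)$.

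I would then apply Theorem~A with the canonical coordinate filtration $(\mathcal F_i)$ of Section~\ref{SECPROOF}. As there, (\ref{EQBASISSIGMA}) gives $\langle e_{l,j},S_n(F)\rangle=\sum_{p=0}^{n-1}a_{l,j+p}$, whence $\|\EE(S_n(F)|\mathcal F_0)\|_2^2=\sum_{j\geq 0}\sum_{l\geq 1}\big|\sum_{p=0}^{n-1}a_{l,j+p}\big|^2$ and $\|S_n(F)-\EE(S_n(F)|\mathcal F_n)\|_2^2=\sum_{j<-n}\sum_{l\geq 1}\big|\sum_{p=0}^{n-1}a_{l,j+p}\big|^2$. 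The heart of the matter is to estimate $\sum_{p=0}^{n-1}a_{l,j+p}=\sum_{i\geq l}p_i\,e_l(i)\,x^*\big(\sum_{q=j}^{j+n-1}T^q x_i\big)$. Here linearity is decisive: the only vector I must control is the \emph{window sum} $\sum_{q=j}^{j+n-1}T^q x_i$, never the individual norms $\|T^q x_i\|$. For $j\geq 0$ this window lies in $[0,+\infty)$ and for $j<-n$ in $(-\infty,0)$, so Condition~(iv), applied to the sequence equal to $i$ on the window and trivial elsewhere (recall $x_1=0$), yields $\big\|\sum_{q=j}^{j+n-1}T^q x_i\big\|\leq \omega_0(i)\sum_{q=j}^{j+n-1}(1+|q|)^{-\alpha}$. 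This is exactly the point advertised in the introduction: with $\veps_k=(1+|k|)^{-\alpha}$, Condition~(iv) directly plays the role that the pointwise decay $\|T^k x\|=O((1+k)^{-\alpha})$ played in Theorem~\ref{THMMAINTCL}.

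Combining this with the $\ell^1$-smallness of $e_l$ recorded after Lemma~\ref{LEMORTHONORMALBASIS} and with (\ref{EQPN4}), I obtain $\sum_{i\geq l}p_i|e_l(i)|\omega_0(i)\leq C\sqrt{p_l}\,\omega_0(l+1)=:\beta_l$ with $\sum_l\beta_l^2<\infty$ (by the fast decay of $(p_l)$), hence $\sum_{l\geq 1}\big|\sum_{p=0}^{n-1}a_{l,j+p}\big|^2\leq C_{x^*,\omega_0}\big(\sum_{q=j}^{j+n-1}(1+|q|)^{-\alpha}\big)^2$. Summing over $j\geq 0$ leaves exactly $\sum_{j\geq 0}\big(\sum_{p=0}^{n-1}(1+j+p)^{-\alpha}\big)^2$, which Fact~1 bounds by $C_\alpha\max(n^{3-2\alpha},\log(n+1))$; the change of variables $j\mapsto -j-n$, $p\mapsto n-1-p$ reduces the $j<-n$ sum to the same quantity. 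Since $\alpha>1$ we have $(3-2\alpha)/2<1/2$, so both series $\sum_n n^{-3/2}\|\EE(S_n(F)|\mathcal F_0)\|_2$ and $\sum_n n^{-3/2}\|S_n(F)-\EE(S_n(F)|\mathcal F_n)\|_2$ converge. Theorem~A then gives the CLT for $F$, and since $\mu=\phi_*\bar\mu$ and $S_n(F)=\big(\sum_{p=0}^{n-1}x^*\circ T^p\big)\circ\phi$ by the intertwining relation (\ref{EQINTERTWINING}), the CLT transfers to $x^*$ under $\mu$.

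The main obstacle is the window estimate of the second paragraph. For a general $f\in E_\omega$ one is forced (as in Lemma~\ref{LEMCOVARIANCE3}) to bound products $\prod_i\|T^{j_i}x_{n_{j_i}}\|^{u_i}$ of \emph{individual} norms, which Condition~(iv) alone cannot supply; this is precisely why Theorem~\ref{THMMAINTCL} needs its additional pointwise hypothesis for the CLT. The present statement escapes this only because $x^*$ is linear, so every estimate factors through a single vector-sum norm to which (iv) applies directly. I would also keep in mind that Theorem~A does not exclude a degenerate limit, so the Gaussian produced may have zero variance (this happens exactly when $x^*$ differs from a coboundary by a constant).
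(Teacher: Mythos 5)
Your argument is correct and coincides with the paper's own proof of this theorem: the measure is obtained as a special case of Theorem \ref{THMMAINTCL} with $\veps_k=(1+|k|)^{-\alpha}$, and the CLT follows from Theorem A with the coordinate filtration by exploiting linearity to control $\sum_{p=0}^{n-1}a_{l,j+p}$ through the norm of the single window sum $\sum_{q=j}^{j+n-1}T^qx_m$ via Condition (iv), the estimate $\sum_{m\geq l}p_m|e_l(m)|\omega_0(m)\leq C\sqrt{p_l}\,\omega_0(l)$, and Fact 1. The one delicate point, which you and the paper treat identically (you merely make it explicit), is the application of (iv) to the sequence equal to $m$ on the window and $1$ elsewhere — strictly read, the off-window indices contribute a bounded additive term $\omega_0(1)\sum_k|k|^{-\alpha}$ that both arguments silently discard — so your proposal matches the paper's route step for step.
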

\begin{proof}
Let $x^*\in X^*$ and let us show that $x^*$ satisfies the CLT. 
Let $F=f\circ\phi=\sum_{j\in\mathbb Z,l\in\mathbb N}a_{l,j}e_{l,j}$. A look at the proof of Theorem \ref{THMMAINTCL} shows that we only need to prove 
\begin{eqnarray}
\label{EQCLTLF1}
\sum_{j\geq 0}\sum_{l\geq 1}\left|\sum_{p=0}^{n-1}a_{l,j+p}\right|^2&=&O(n^{1-\veps})\\
\label{EQCLTLF2}
\sum_{j<-n}\sum_{l\geq 1}\left|\sum_{p=0}^{n-1}a_{l,j+p}\right|^2&=&O(n^{1-\veps})
\end{eqnarray}
for some $\veps>0$. Let $j\geq 0$ and $l\geq 1$. Like in Lemma \ref{LEMCOVARIANCE3},
\begin{eqnarray*}
a_{l,j}&=&\int_{\mathbb N}e_{l,j}\big((n_k)\big)\langle x^*,\phi\big((n_k\big))\rangle d\bar\mu\big((n_k)\big)\\
&=&\sum_{m\geq l}p_me_l(m)\langle x^*,T^j x_m\rangle.
\end{eqnarray*}
By linearity of $x^*$, 
$$\sum_{p=0}^{n-1}a_{l,j+p}=\sum_{m\geq l}p_me_l(m)\langle x^*,\sum_{p=0}^{n-1}T^{j+p}x_m\rangle.$$
We can now use Assumption (iv) and argue as in Lemma \ref{LEMCOVARIANCE3}:
\begin{eqnarray*}
\left|\sum_{p=0}^{n-1}a_{l,j+p}\right|&\leq&C\sum_{m\geq l}p_m |e_l(m)|\omega_0(m)\|x^*\|\sum_{p=0}^{n-1}\frac 1{(1+j+p)^\alpha}\\
&\leq&C\sqrt{p_l}\omega_0(l)\|x^*\|\sum_{p=0}^{n-1}\frac 1{(1+j+p)^\alpha}.
\end{eqnarray*}
This implies, arguing as in Lemma \ref{LEMSUMFOURIER},
$$\sum_{j\geq 0}\sum_{l\geq 1}\left|\sum_{p=0}^{n-1}a_{l,j+p}\right|^2\leq C\|x^*\|^2\sum_{j\geq 0}\left|\sum_{p=0}^{n-1}\frac 1{(1+j+p)^\alpha}\right|^2\leq C\|x^*\|^2n^{3-2\alpha}.$$
Since $3-2\alpha<1$, this shows (\ref{EQCLTLF1}). The proof of (\ref{EQCLTLF2}) is completely similar and omitted.
\end{proof}

This theorem is interesting for operators such that $\sum_n T^n x$ converges unconditionnaly whereas $\sum_n \|T^n x\|=+\infty$. For instance, we get the following corollary for backward shifts, where the value $1/p$ is optimal. 
\begin{corollary}
Let $\omega:\mathbb N\to(1,+\infty)$ going to $+\infty$ and let $B_{\mathbf w}$ be a bounded backward weighted shift
on $\ell^p(\mathbb Z_+)$. Suppose that there exists $\alpha>1/p$ such that, for any $n\geq 1$,
$w_1\cdots w_n\geq Cn^{\alpha}.$
Then there exists a  $B_{\mathbf w}$-invariant strongly mixing Borel probability measure $\mu$ on $\ell^p(\mathbb Z_+)$ with full support
such that $E_\omega\subset L^2(X,\mathcal B,\mu)$.
Moreover,  for any $x^*\in \ell^q(\mathbb Z_+)$, $\frac1p+\frac 1q=1$, the sequence $\dis\frac{1}{\sqrt n}(x^*+\dots+ x^*\circ T^{n-1})$
converges in distribution to a Gaussian random variable of finite variance.
\end{corollary}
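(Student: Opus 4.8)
The first assertion needs no new idea: since $\alpha>1/p$ we have $\sum_{n\geq 1}(w_1\cdots w_n)^{-p}\leq C^{-p}\sum_n n^{-\alpha p}<+\infty$, so Theorem \ref{THMTCLBS} already delivers a $B_{\mathbf w}$-invariant strongly mixing measure $\mu$ with full support and $E_\omega\subset L^2(\mu)$. Only the central limit theorem for $x^*$ requires work, and it cannot be read off from the preceding theorem: testing Condition (iv) on sequences $(n_k)$ supported at a single index forces, for the shift, an exponent $\leq\alpha\leq 1$, whereas that theorem needs an exponent $>1$. The plan is therefore to argue directly inside the Bernoulli model of Section \ref{SECCONSTRUCTIONMEASURE}, using the crucial simplification that for a \emph{linear} form the Birkhoff sum becomes a sum of independent random variables.

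I keep the construction of Theorem \ref{THMTCLBS}, $x_n=\alpha_n e_0$ and $S_n e_0=(w_1\cdots w_n)^{-1}e_n$, and arrange (a matter of choosing $(\alpha_l)$ and $(p_l)$) that $\sum_l p_l\alpha_l=0$ and $\sum_l p_l|\alpha_l|^{2+\delta}<+\infty$ for some $\delta>0$; the first makes $\mu$ centered, so $\int x^*d\mu=0$. Since $\alpha_{n_0}e_0$ carries all the randomness of a coordinate, $F=x^*\circ\phi=\sum_{k\in\ZZ}\alpha_{n_k}\langle x^*,T^k e_0\rangle$ is a sum of independent terms, and reindexing $F\circ\sigma^p$ gives
$$S_n(F)=\sum_{k\in\ZZ}c_k\,\alpha_{n_k},\qquad c_k=\Big\langle x^*,\sum_{p=0}^{n-1}T^{k+p}e_0\Big\rangle,$$
a sum of \emph{independent} variables with deterministic coefficients. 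Setting $a_m=\langle x^*,S_m e_0\rangle=x^*_m/(w_1\cdots w_m)$, one finds $c_k=0$ for $k>0$ and $c_{-i}=\sum_{s=0}^{n-1}a_{i-s}=(a*\chi_n)_i$ for $i\geq 0$, where $\chi_n=\mathbf 1_{\{0,\dots,n-1\}}$ and $a_m=0$ for $m<0$. H\"older's inequality together with $\alpha p>1$ yields $\sum_m|a_m|\leq C\|x^*\|_q\big(\sum_m m^{-\alpha p}\big)^{1/p}<+\infty$, so $a\in\ell^1$; consequently every $|c_k|\leq\|a\|_1$ and $\sum_k c_k^2=\|a*\chi_n\|_2^2$.

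Hence $\mathrm{Var}_\mu\big(S_n(F)\big)=\sigma^2\|a*\chi_n\|_2^2$ with $\sigma^2=\sum_l p_l|\alpha_l|^2>0$. Expanding the square as an autocorrelation and using $a\in\ell^1$,
$$\frac1n\|a*\chi_n\|_2^2=\sum_{|d|<n}\Big(1-\frac{|d|}n\Big)\gamma(d)\xrightarrow[n\to\infty]{}\sum_{d\in\ZZ}\gamma(d)=\Big|\sum_m a_m\Big|^2=:|A|^2,$$
where $\gamma(d)=\sum_i a_i\overline{a_{i+d}}$ and $\sum_d|\gamma(d)|\leq\|a\|_1^2$ legitimises the limit; note $A=\langle x^*,E(1)\rangle$ for $E(1)=\sum_m(w_1\cdots w_m)^{-1}e_m$. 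The summands are i.i.d.\ in law with finite $(2+\delta)$-moment and uniformly bounded coefficients $|c_k|\leq\|a\|_1$, so when $A\neq0$ the total variance $\sigma^2\|a*\chi_n\|_2^2\sim\sigma^2|A|^2 n\to\infty$ and Lyapunov's ratio $\big(\sum_k|c_k|^{2+\delta}\big)\big/\big(\sum_k c_k^2\big)^{1+\delta/2}\leq\|a\|_1^{\delta}/\big(\sum_k c_k^2\big)^{\delta/2}\to0$, giving $S_n(F)/\sqrt n\Rightarrow\mathcal N(0,\sigma^2|A|^2)$. When $A=0$ the variance is $o(n)$ and $S_n(F)/\sqrt n\to0$ in $L^2$, i.e.\ a degenerate Gaussian; in both cases the limit is a finite-variance Gaussian, as claimed.

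The delicate point — and the reason I avoid the Maxwell--Woodroofe route of Theorem A used in the previous theorem — is precisely the variance asymptotics. Discarding the signs of $a_m$ and bounding $|c_k|\leq\sum_m|a_m|$ termwise would only give $\|a*\chi_n\|_2^2=\Theta(n)$ with the strictly positive constant $\|\,|a|\,\|_1^2$; this makes $\|S_n(F)-\mathbb E(S_n(F)\mid\mathcal F_n)\|_2$ of exact order $\sqrt n$ and the series $\sum_n n^{-3/2}\|\cdot\|_2$ diverge. One must keep the oscillation of $a_m=x^*_m/(w_1\cdots w_m)$: the summability of the autocorrelation $(\gamma(d))$ pins down the true constant $|A|^2$, and, more importantly, the \emph{independence} of the summands lets Lyapunov's theorem produce the CLT with \emph{no} decay rate on the correlations — which is indispensable, since for slowly decaying $x^*\in\ell^q$ no estimate of the form $O(n^{1-\veps})$ is available when $\alpha$ is close to the optimal threshold $1/p$.
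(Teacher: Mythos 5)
Your CLT argument is correct in substance and is genuinely different from the paper's route: the paper obtains this corollary as a direct application of the theorem immediately preceding it, whose proof runs through Voln\'y's nonadapted Maxwell--Woodroofe criterion (Theorem A), whereas you exploit the special structure of the shift model (with $x_n=\alpha_n e_0$, the function $x^*\circ\phi$ is a convergent series of \emph{independent} coordinate variables) and apply the Lindeberg--Lyapunov CLT directly. Your route buys more than the paper's: an explicit limiting variance $\sigma^2\langle x^*,E(1)\rangle^2$ with $E(1)=\sum_m(w_1\cdots w_m)^{-1}e_m$, and a clean criterion for degeneracy of the limit. Two points to tidy up: the row sums $\sum_k c_k\alpha_{n_k}$ have infinitely many nonzero terms, so you need the version of Lindeberg--Feller for convergent series of independent variables (the characteristic-function proof goes through, but say so); and arranging $\sum_l p_l\alpha_l=0$ simultaneously with the density of $(\alpha_l)$, the bound $|\alpha_l|^p\leq\omega_0(l)$, the requirement $\alpha_1=0$ and the rapid decay of $(p_l)$ deserves a sentence --- or simply prove the CLT for $x^*-\int x^*d\mu$, which is what the statement must mean in any case.

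However, your stated reason for abandoning the paper's route is incorrect. You claim that Condition (iv) of the linear-forms theorem can only hold for the shift with an exponent $\leq\alpha$, hence possibly $\leq 1$. In fact the proof of Theorem \ref{THMTCLBS} already establishes
$$\left\|\sum_{k<0}S_{-k}x_{n_k}\right\|=\left(\sum_{k<0}\frac{|\alpha_{n_k}|^p}{(w_1\cdots w_{-k})^p}\right)^{1/p}\leq C_{\mathbf w,p}\sum_{k<0}\frac{\omega_0(n_k)}{(w_1\cdots w_{-k})^p}\leq C\sum_{k<0}\frac{\omega_0(n_k)}{|k|^{\alpha p}},$$
the middle inequality holding because the sum inside the $1/p$-th power is bounded below by $w_1^{-p}>0$, so that $t^{1/p}\leq Ct$ there. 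Thus Condition (iv) holds with exponent $\alpha p>1$: the $\ell^p$-norm structure upgrades the single-vector decay $\|S_k e_0\|\lesssim k^{-\alpha}$ to the rate $k^{-\alpha p}$ in the collective bound, and this is precisely why the threshold in the corollary is $1/p$ rather than $1$. Your ``single-index test'' misses this because Condition (iv) is an $\ell^1$-type bound on the whole sum, not a termwise bound on each $\|S_{-k}x_{n_k}\|$. With exponent $\alpha p>1$ the estimate $\|\mathbb E(S_n(F)\mid\mathcal F_0)\|_2^2=O(n^{3-2\alpha p})=O(n^{1-\veps})$ from the preceding theorem is available, so the Maxwell--Woodroofe route does go through; the claim in your last paragraph that no such estimate exists near the threshold is therefore also incorrect.
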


More surprinzingly, we obtain that a central limit theorem holds in the context of parabolic composition operators. This was unavailable with Theorem \ref{THMMAINTCL}.
\begin{corollary}
Let $\phi$ be a parabolic automorphism of the disk. There exists a $C_\phi$-invariant
strongly mixing Borel probability measure $\mu$ on $H^2(\DD)$ such that $(H^2)^*\subset L^2(\mu)$ and any $x^*$ in $(H^2)^*$ satisfies the CLT.
\end{corollary}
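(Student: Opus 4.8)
The plan is to apply the central limit theorem for linear forms proved just above to $T=C_\phi$, after transporting the problem, via the Cayley map, to the translation operator $\tau_1$ on the upper half-plane Hardy space $\mathcal H^2$, exactly as in Example~\ref{EXCO}. I fix once and for all an arbitrary $\omega:\NN\to(1,+\infty)$ going to infinity; since every $x^*\in(H^2)^*$ is a polynomial of degree one it belongs automatically to $E_\omega$, so the membership $(H^2)^*\subset L^2(\mu)$ will follow from $E_\omega\subset L^2(\mu)$. The crucial point is that the delicate Condition~(iv), which the linear-forms theorem requires \emph{with an exponent $\alpha>1$}, has essentially already been checked in the proof of Example~\ref{EXCO}, despite the fact that there each individual term $\|\tau_k Q_n\|_2$ only decayed like $k^{-\alpha}$ with $\alpha<1$.

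Concretely, given $\omega_0$ going to infinity, I would reuse the data of Example~\ref{EXCO}: assuming $\phi(1)=1$, fix $p\geq 7/2$, let $\mathcal D$ consist of the images under the Cayley map of the holomorphic polynomials vanishing together with their derivatives up to order $2p$ at the fixed point $1$, so that each $Q\in\mathcal D$ satisfies $|Q(x)|\leq C_Q(1+x^2)^{-p}$, and take $S_n=\tau_{-n}$ together with $x_n=Q_n$ for a dense sequence $(Q_n)$ with $|Q_n(x)|\leq\sqrt{\omega_0(n)}\,(1+x^2)^{-p}$. Conditions~(i)--(iii) and~(v) of the theorem are the standard mapping properties of the translation $\tau_1$ recorded in \cite[Theorem~6.14]{BM09}, exactly as in Example~\ref{EXCO}.

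The heart of the matter is Condition~(iv), and the key is to read off from Example~\ref{EXCO} not the weak per-term estimate but the bound on the norm of the \emph{entire series}: the $\ell^2$ geometry of $\mathcal H^2$ creates off-diagonal cancellation, which yields
$$\left\|\sum_{k\geq 0}\tau_k Q_{n_k}\right\|_2+\left\|\sum_{k<0}\tau_k Q_{n_k}\right\|_2\leq C\sum_{k\in\ZZ}\frac{\omega_0(n_k)}{(1+|k|)^{3/2}},$$
the backward sum being deduced from the forward one through the symmetry $t\mapsto -t$, under which the weight $(1+t^2)^{-1}$ is invariant. Hence Condition~(iv) holds with $\alpha=3/2>1$, precisely the regime required here---and precisely what was \emph{unavailable} for the covariance and CLT conclusions of Theorem~\ref{THMMAINTCL}, where only the per-term exponent (bounded by $1$) intervened.

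With every hypothesis verified, the linear-forms theorem produces a $\tau_1$-invariant strongly mixing Borel probability measure on $\mathcal H^2$ with full support, under which $E_\omega\subset L^2$ and every continuous linear functional satisfies the CLT; pushing this measure back to $H^2(\DD)$ along the Cayley isomorphism (which conjugates $\tau_1$ to $C_\phi$) transfers all these properties and gives the asserted $\mu$. The only real obstacle is conceptual rather than computational: one must recognize that Condition~(iv) constrains the norm of the summed series---where cancellation upgrades the decay from $k^{-\alpha}$, $\alpha<1$, to $k^{-3/2}$---and not the individual summands, which is exactly the reason a central limit theorem becomes accessible for linear forms although it failed for general elements of $E_\omega$.
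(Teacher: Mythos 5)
Your proposal is correct and follows exactly the route the paper intends: the corollary is an immediate consequence of the linear-forms theorem, whose hypotheses (i)--(iii), (v) are the standard properties of $\tau_1$ on $\mathcal H^2$ and whose condition (iv) was already established in Example~\ref{EXCO} with $\veps_k=k^{-3/2}$, i.e.\ with exponent $\alpha=3/2>1$, which is the regime needed for the CLT for linear forms even though the per-term decay $\|\tau_kQ_n\|_2=O(k^{-\alpha})$ only holds for $\alpha<1$. Your observation that condition (iv) constrains the norm of the summed series (via almost-orthogonality of the translates) rather than the individual terms is precisely the point the paper is making when it says this conclusion was unavailable from Theorem~\ref{THMMAINTCL}.
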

\begin{question}
Do the above corollaries remain true if we consider polynomials instead of linear forms?
\end{question}

\subsection{Fr\'echet spaces}
Theorem \ref{THMMURILLOPERIS} was proved in \cite{MuPe13} for an $F-$space. For convenience, we restrict ourselves to Banach spaces;
however, there are interesting examples beside this context, especially in the Fr\'echet space setting.

\begin{question}
Can we extend Theorem \ref{THMMAINTCL} to Fr\'echet spaces?
\end{question}

\providecommand{\bysame}{\leavevmode\hbox to3em{\hrulefill}\thinspace}
\providecommand{\MR}{\relax\ifhmode\unskip\space\fi MR }
\providecommand{\MRhref}[2]{%
  \href{http://www.ams.org/mathscinet-getitem?mr=#1}{#2}
}
\providecommand{\href}[2]{#2}

\end{document}